\newcommand{\ch}[1]{#1}
\theoremstyle{plain}
\newtheorem{defin}{Definition}[section]
\newtheorem{theorem}[defin]{Theorem}
\newtheorem{prop}[defin]{Proposition}
\newtheorem{lemma}[defin]{Lemma}
\newtheorem{coro}[defin]{Corollary}
\newtheorem{remark}[defin]{Remark}
\numberwithin{equation}{section}
\renewcommand{\theequation}{\thesection.\arabic{equation}}
\DeclareMathOperator{\R}{\mathbb{R}}
\DeclareMathOperator{\N}{\mathbb{N}}
\newcommand{\car}[1]{\raise1pt\hbox{$\chi$}_{#1}}
\definecolor{sap}{RGB}{120,36,51}
\newcommand{\eps}{\varepsilon}
\def\lp'n{(L^{p'}(\Omega))^{N}}
\def\R{\mathbb{R}}
\def\N{\mathbb{N}}
\def\car#1{\chi_{_{{#1}}}}
\def\norma#1#2{\|#1\|_{\lower 4pt \hbox{$ \scriptstyle #2$ }}}
\newcommand{\tbar}{\overline{\bm\vartheta}}
\newcommand{\vecth}{{\bm\vartheta}}
\newcommand{\vth}{\vartheta}
\newcommand{\vthb}{\bar{\vartheta}}
\def\hl{H^{1}_{0L}(I)}
\newcommand{\vh}{\bm{\vec{h}}}
\newcommand{\vht}{\bm{\vec{\tilde h}}}
\newcommand{\vhb}{\bm{\vec{\bar h}}}
\newcommand{\io}{\int_0^1}
\def\1eq{$(P_{\vartheta_i})$}
\def\2eq{$(P_{\lambda_i})$}
\def\3eq{$(P_{\alpha})$}
\newcommand{\aut}{\mu}
\newcommand{\HH}{\max\limits_{i=1,\dots,n}\{|\vec h_i|\}}
\author[1]{Riccardo Durastanti}
\address[1]{Dipartimento di Scienze di Base e Applicate per l' Ingegneria, ``Sapienza" Universit\`a di Roma, Via Scarpa 16, 00161 Roma, Italy
\\ riccardo.durastanti@sbai.uniroma1.it}
\author[2]{Lorenzo Giacomelli}
\address[2]{Dipartimento di Scienze di Base e Applicate per l' Ingegneria, ``Sapienza" Universit\`a di Roma, Via Scarpa 16, 00161 Roma, Italy
\\ lorenzo.giacomelli@sbai.uniroma1.it}
\author[3]{Giuseppe Tomassetti}
\address[3]{Universit\`a degli Studi ``Roma Tre'', Dipartimento di Ingegneria, Via Vito Volterra 62, 00146 Roma, Italy
  \\ giuseppe.tomassetti@uniroma3.it\\
\href{https://orcid.org/0000-0001-8801-7461}{\texttt{ORCiD: 0000-0001-8801-7461}}
}
\keywords{
Rods,
nonlinear elasticity, optimal design,
optimal control.
}
\subjclass[2020]{
74K10,
74B20,
49J15,
\ch{49K15}
}
\begin{document}

\title{Shape programming of a magnetic elastica}

\maketitle

\begin{abstract}
We consider a cantilever beam which possesses a possibly  non-uniform permanent magnetization, and whose shape is controlled by an applied magnetic field. We model the beam as a plane elastic curve and we suppose that the magnetic field acts upon the beam by means of a distributed couple that pulls the magnetization towards its direction. Given a list of target shapes, we look for a design of the magnetization profile and for a list of controls such that the shapes assumed by the beam when acted upon by the controls are as close as possible to the targets, in an averaged sense. To this effect, we formulate and solve an  optimal design and control problem leading to the minimization of a functional which we study by both direct and indirect methods. In particular, we prove \ch{that minimizers exist, solve the associated Lagrange-multiplier formulation (besides non-generic cases), and are unique at least} for sufficiently low intensities of the controlling magnetic fields. To \ch{achieve the latter result,} we use two nested fixed-point arguments relying on the Lagrange-multiplier formulation of the problem, a method which also suggests a numerical scheme. \ch{Various relevant open question are also discussed.}
\vskip 0.5\baselineskip
\end{abstract}

\tableofcontents

\section{Introduction and main results}\label{sec:intro}

\subsection{Motivation}
Recent technological developments have made it possible to assemble\ch{, with pinpoint accuracy of composition and texture, elastic} materials which can convert into deformation, and hence motion, a diversity of energetic inputs in the form of heat, light, chemical agents, electric and magnetic fields. \ch{These advances make it possible to craft devices which can change their shape through distributed actuation, mimicking biological examples such as elephant trunks and octopus arms, which are best suited for interacting with complex environments \cite{mazzolai2012}. In particular, engineers find these materials appealing for applications at small scales, such as for instance microsurgery and drug delivery \cite{nelson2010}, where the implementation of conventional technologies proves difficult. For these applications, a key requirement is the ability to attain a large variety of shapes: for example, the locomotion of miniature robots based on crawling involves negotiation of obstacles of all sorts in confined spaces \cite{Hu2018}, thus requiring high adaptability; likewise, locomotion based on swimming  requires ad hoc shape-control strategies such as a distinct power- and recovery-stroke \cite{lum2016shape}.

\smallskip

Shape control of devices with distributed actuation cannot be addressed with the conventional engineering practice of designing separately power, kinematics, and control: in order to achieve a desired motion strategy, the device morphology and the stimulus must be designed \emph{at the same time} \cite{krichmar2012,pfeifer2014}. This state of matters} has stimulated substantial theoretical work concerning \emph{shape programming}, i.e., the design of textures and controls that produce desired shapes, a topic which is becoming increasingly relevant in \ch{theoretical} elasticity (see e.g. \cite{Acharya2018,Agostiniani2017a}).

\smallskip

\ch{Additional problems arise when miniature devices are required to operate untethered. In fact, since dissipative effects are dominant at small scales, self-powered devices require high-density energy storage mechanisms. In this respect, magnetic actuation offers several advantages: it can remotely provide both control and power}, it offers fast response, and it does not affect the surrounding medium by polarization \cite{Rikken2014a}.

\smallskip

\ch{Among the many available magneto-elastic materials, the so-called magnetorheological elastomers (MREs) are particular suited for shape programming. Originally devised as viscoelastic solids whose mechanical response could be controlled by an applied magnetic field \cite{Ginder1999}, MREs are obtained by embedding magnetic particles in a soft elastomeric matrix. Thanks to their compliance, MREs find applications in circumstances when large displacements are in need \cite{zhao1}. Moreover, their magnetic properties can be finely controlled \cite{kim2018}. Furthermore, the theoretical modeling of MREs is well established (see for instance \cite{Dorfmann2004b} and \cite{KANKANALA2004}), their stability at both the macroscopic \cite{Ottenio2008} and the microstructural level \cite{Rudykh2013} has been investigated, and ad-hoc computational techniques \cite{Pelteret2016} are available.}

\smallskip

\ch{Proofs of concept exist \cite{Hu2018,lum2016shape,Xu2019} that MREs can be used to fabricate small-scale untethered microrobots, which can walk, crawl, and swim. Indeed, w}hen crafted in the form of thin bodies, such as rods or plates, magnetorheological elastomers display a very large range of motion \cite{zhao1,zhao2}. In this respect, shape programming appears to be rather intriguing even for a simple mechanical model such Euler's Elastica\ch{, which is at the basis of the model we adopt in the present paper}. This is not surprising, since the qualitative and quantitative properties of equilibrium solutions for elastic curves in a diversity of settings is still the object of intense mathematical research (see e.g. \cite{DellaCorte2017,Ferone2016,jian2003,miura2017elastic}).

\smallskip

\ch{The present paper is meant as a contribution towards the development of a systematic mathematical framework for shape programming of magnetic materials, with an emphasis on obtaining rigorous results. With this aim in mind, we focus on a mechanical model featuring a planar \emph{cantilever beam} with \emph{permanent magnetization} having constant intensity but variable direction, under a \emph{spatially-constant magnetic field}, as shown in the following figure.}

\begin{figure}[H]\label{fig:1}
  \begin{center}
   \includegraphics[scale=0.8]{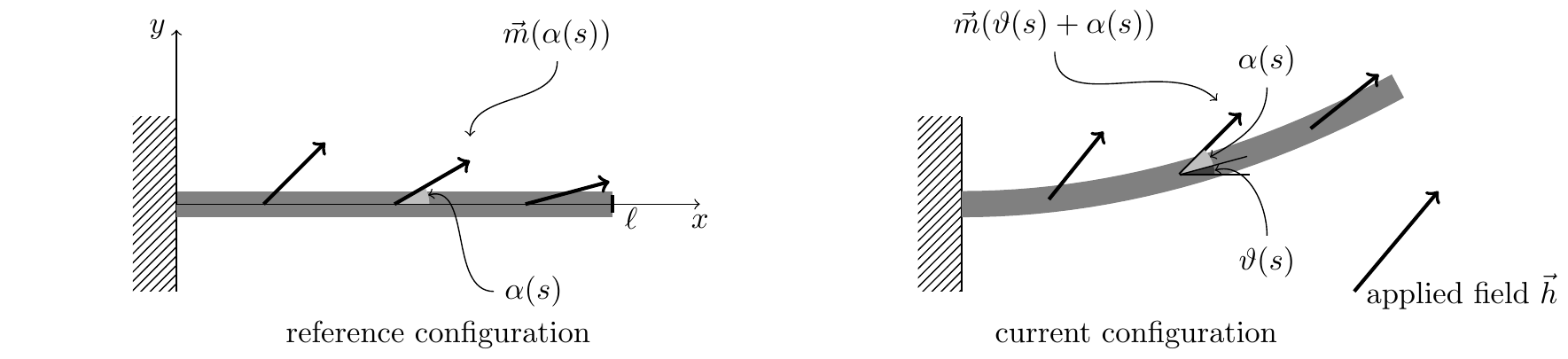}
  \end{center}
  \caption{\ch{A} cantilever beam with a permanent magnetization of uniform intensity and angle $\alpha(s)$ with respect to the tangent.}
\end{figure}
\ch{For this model, we formulate an optimal design-and-control problem which may verbally be described as follows: given a list of pre-assigned target shapes, choose a magnetization profile (the \emph{morphology}) and a list of applied fields (the \emph{stimuli}) such that the shapes attained by the beam under the action of these fields best approximate the given shapes, in some averaged sense. Admittedly, the formulation we choose ignores a certain amount of the physics which comes into play in actual engineering applications. For example, the interaction force with the surrounding medium is being ignored. Likewise, this formulation ignores the dynamic effects of moving from one shape to another as the applied field varies (see the discussion in Section \ref{sec:open}). On the other hand, experimental evidence from \cite{lum2016shape} shows that the results obtained in such simplified setting can still furnish valuable guidance to the design of actual devices.}

\subsection{\ch{The mathematical model}}
We model the cantilever beam as a magnetized {\emph{planar elastica}}, and we describe its configuration through the parametric curve $\ch{\vec r}:(0,1)\to\mathbb R^2$ defined by
\begin{equation}\label{eq:5}
  \ch{\vec r}(s)=\ell\int_0^s \vec m(\vartheta(\xi))\,{\rm d}\xi,
\end{equation}
where $\ell$ is the length of the beam, $\vec m:\mathbb R\to\mathbb R^2$ is defined by
  \begin{equation}\label{eq:17}
    \vec m(v)=(\cos(v),\sin(v)),
\end{equation}
and $\vartheta(s)$ is the \emph{rotation at $s$}. With slight abuse of language, we shall refer to the function $\vartheta:(0,1)\to\mathbb R$ as the \emph{shape of the beam}. Since the beam is clamped, the shape must satisfy the essential boundary condition:
\begin{equation*}
  \vartheta(0)=0,
\end{equation*}
which holds irrespectively of the loading environment.

\smallskip

The beam has a permanent magnetization per unit length, whose intensity is a constant $M_0$ (its unit in the S.I. System is ampere$\cdot$meter${}^{-2}$ $[\rm Am^{-2}]$), and whose orientation with respect to the tangent line is given by a possibly non-uniform \emph{relative angle} $\alpha(s)\in\mathbb R$, $s\in(0,1)$. We assume that the relative angle $\alpha(s)$ is not affected by the magnetic field and by the deformation process. Hence the vector field\ch{s}
\begin{equation*}
\ch{ \vec m(\alpha)=(\cos(\alpha),\sin(\alpha)),\quad }  \vec m(\vartheta+\alpha)=(\cos(\alpha+\vartheta),\sin(\alpha+\vartheta))
\end{equation*}
\ch{are} the \emph{orientation of the magnetization} in the \ch{undeformed, resp. deformed,} configuration\ch{s}.

\smallskip

\ch{Theories of magnetoelastic rods (see for instance \cite{cebers2007magnetic,Ciambella2018,gerbal2015}) predict} that when a spatially constant magnetic field $\vec H$ $[\rm Am^{-1}]$ is applied to the beam, any stable equilibrium configuration must be a local minimizer of the renormalized \emph{magnetoelastic energy}
\begin{equation}\label{eq:6}
  \mathcal E(\vartheta)=\int_0^1 \Big(\frac 12\big(\vartheta'\big)^2-\vec h\cdot\vec m(\vartheta+\alpha)\Big)\, {\rm d}s,
\end{equation}
where a dot denotes the scalar product, $\vec h$ is the renormalized magnetic field defined by $\vec h=\mu_0\frac {M_0 \ell^2 }{S}\vec H$, with
$\mu_0$ $[{\rm Hm^{-1}}]$ the magnetic permeability of vacuum  and  $S$ $[\rm Nm^2]$ is the \emph{bending stiffness}. The vector $\vec h$ is dimensionless, since its modulus  $|\vec h|=(\mu_0 M_0H\ell)/(S\ell^{-1})$ can be written as the ratio between the \emph{magnetic energy} $\frac 12\mu_0 M_0 H\ell$ that must be expended to immerse the beam in the magnetic field, and the \emph{elastic energy} $S/\ell$ that must be stored in the system to impart the curvature $\ell^{-1}$ to the beam.

\subsection{The state equation}\label{sec:the-state-equation}
For $\vec h=0$ the magnetoelastic energy has the unique minimizer $\vartheta=0$, which corresponds through \eqref{eq:5} to the straight configuration. As detailed in Section \ref{sec:basic} (see Corollary \ref{cor:theta}), for given, arbitrary $\vec h$ and $\alpha$ the magnetoelastic energy has at least one minimizer, which furthermore solves the \emph{Euler-Lagrange system}
\begin{equation}
\begin{cases}
\label{pthi1}\tag{$P_\vth$}
-\vth''-\vec h\cdot {D\vec m}(\alpha+\vartheta)=0   & \mbox{in $\ch{I:=}(0,1),$} \\
\vth(0)=0, \\
\vth'(1)=0,
\end{cases}
\end{equation}
where
\begin{equation}\label{consist}
  D\vec m(v)=(-\sin v,\cos v),\qquad \text{for all }v\in\mathbb R
\end{equation}
is the derivative of the function $\vec m$ defined in \eqref{eq:17}; moreover, such minimizer is unique if
\begin{equation}\label{eq:8}
  |\vec h|< c_p^{-2},
\end{equation}
where $c_p= 2/\pi$ is the best constant in the Poincar\'e-type inequality
\begin{equation*}
	\int_0^1 v^2\le c_p^2\int_0^1 (v')^2\quad\text{for all }v\in C^1([0,1])\text{ such that }v(0)=0.
\end{equation*}
The \emph{state equation} \eqref{pthi1} is a variant of the well-known \ch{{\emph{elastica equation}}}. Given $\alpha$, \eqref{pthi1} defines a \emph{solution operator}
\begin{equation}\label{def:Theta_alpha}
\Theta_\alpha\ : \quad B(0,c_p^{-2})\ni \vec h \ \longmapsto\ \vartheta= \Theta_\alpha(\vec h)
\end{equation}
which maps the \emph{control} $\vec h$ into the \emph{state} $\vartheta=\Theta_\alpha(\vec h)$. The manifold of attainable configurations parametrized by the chart $\Theta_\alpha$ is two-dimensional. Thus, one may hope that complex motions, such as for instance those required for applications to microswimmers \cite{alouges2015can,Alouges2013}, could be realized, at least with a reasonable approximation, by a judicious choice of a fixed magnetization profile and a time varying magnetic field. The papers \cite{Hu2018} and \cite{lum2016shape}  offer experimental evidence of this possibility.

\subsection{The optimal design-control problem}\label{sec:the-optimal-design}
In this paper we are concerned with the following situation. We are given a list of $n$ prescribed \emph{target shapes},
\[
  \tbar=(\vthb_1,\dots,\vthb_n):[0,1]\to\mathbb R^n,
\]
which the beam should ideally attain by applying $n$ \emph{controls}: these are the $n$ magnetic fields
\[
\vh=(\vec{h}_1,\dots,\vec{h}_n)\in\R^{2n},
\]
with $\vec{h}_i=(h_{ix},h_{iy})\in\R^2$. At our disposal is also a \emph{design}, the magnetization $\alpha$ of the beam. Thus, we look for a design $\alpha$ and a control $\vh$ such that the shapes $\vartheta_i = \Theta_\alpha(\vec h_i)$ attained by the beam when applying the magnetic fields $\vec h_i$, namely the solutions of
\begin{equation}
\begin{cases}
\label{pthi}\tag{$P_{\vth_i}$}
-\vth_i''-\vec h_i\cdot{D\vec m}(\alpha+\vth_i)=0  & \mbox{in $(0,1),$} \\
\vth_i(0)=0, \\
\vth_i'(1)=0,
\end{cases} \quad \mbox{$i=1,\dots,n,$}
\end{equation}
are ``as close as possible'' to the targets $\overline\vartheta_i$. The precise meaning of ``closeness'' depends on the choice of the {\emph cost functional} $\mathcal C$, which we define as follows:
\begin{equation}
\label{cost}
\mathcal{C}(\vh,\alpha,\vecth)\ch{=\mathcal C_{\eps,\gamma}(\vh,\alpha,\vecth)}=\frac{1}{2}\sum_{i=1}^n \io |\vth_i-\vthb_i|^2 +\frac{\varepsilon}{2}\io |\alpha'|^2 + \frac{\gamma}{2} \sum_{i=1}^n |\vec{h}_i|^2\ch{,}
\end{equation}
where $\varepsilon>0$ and $\gamma>0$ are positive parameters.

\begin{remark}[The cost functional]
{\rm The choice of the cost functional $\mathcal C$ in \eqref{cost} deserves a discussion. The first integral has an obvious interpretation, since we aim at minimizing the distance between the $n$ attained shapes $\vartheta_i$ and the $n$ target shapes $\overline\vartheta_i$. \ch{The second and third term, which penalize inhomogeneities of the magnetization density, resp. high intensities of the applied magnetic fields, are key technical ingredients, since they render the cost functional coercive with respect to a topology that guarantees compactness of minimizing sequences.}
}\end{remark}

Our precise mathematical formulation of the problem thus involves three ingredients:
\begin{itemize}
\item[(i)] the \emph{admissible space}
\begin{equation}\label{adm-H}
  \mathcal{H}=\{(\vh,\alpha,\vecth):\vh\in\R^{2n},\alpha\in\hl,\vecth\in\hl^n\}\color{black}=\R^{2n}\times\hl\times\hl^n,
\end{equation}
where
\begin{equation}\label{defhl}
  \hl:=\{v\in H^1(I):v(0)=0\};
\end{equation}
\item[(ii)] the \emph{cost functional} $\mathcal{C}:\mathcal{H}\to\R$, defined by \eqref{cost} for all $(\vh,\alpha,\vecth)\in\mathcal{H}$;
\smallskip
\item[(iii)] the \emph{admissible set}
\begin{equation}\label{eq:11}
\mathcal{A}=\left\{(\vh,\alpha,\vecth)\in \mathcal{H} : \ \vth_i \text{ solves \eqref{pthi} for every } i=1,\dots,n \right\}.
\end{equation}
\end{itemize}
With these three ingredients, we may formulate the following \emph{Optimal Control-Design Problem}:
\begin{equation}\label{main-problem}
\text{ minimize $\mathcal C(\vh,\alpha,\vecth)$ among all $(\vh,\alpha,\vecth)\in \mathcal A$.}
\end{equation}
\ch{Simple calculations using angle sum identities show that \eqref{pthi} and \eqref{cost} are invariant under an equal rotation of the vectors $\vec h_i$ and $\vec m(\alpha)$ (see also the proof of part $(iii)$ of Theorem \ref{min}): therefore, in \eqref{adm-H} we have set $\alpha(0)=0$ without losing generality.}

\subsection{Our results}

Using the direct method of the Calculus of Variations, we prove in Section \ref{sec:minimizer} the existence of a minimizer\ch{. In this respect, the penalization in the definition of $\mathcal C$ is crucial in guaranteeing coercivity for generic targets}:

\begin{theorem}
\label{min}
\begin{itemize}

\item[$(i)$] For any  $\eps>0$, $\gamma>0$, and any $\overline{\vecth}\in L^2(I)^n$, the cost functional $\mathcal C_{\ch{\eps,\gamma}}$ has a minimizer in the admissible set $\mathcal A$. Furthermore, any minimizer is such that
\begin{equation}\label{eq:13}
\left(\HH\right)^2\leq \frac{\overline{\Theta}^2}{\gamma},\qquad\mbox{where}\quad \overline{\Theta}^2= \sum_{i=1}^n \int_0^1\overline \vartheta_i^2.
\end{equation}
\ch{
\item[$(ii)$] For any {\em attainable} target $\tbar$, i.e. any $\tbar$ such that $(\vhb,\bar\alpha,\tbar)\in \mathcal A$ for some $\vhb\in\R^{2n}$  and some $\bar \alpha\in H^1_{0L}(I)$, minimizers of $\mathcal C_{\eps,\eps}$ converge to a minimizer of $\mathcal C_{0,0}$ as $\eps$ tends to $0$.

\item[$(iii)$] For $n=1$, any $\bar\vartheta \in H^3(I)$ with $\bar\vartheta(0)=0$ and $\bar\vartheta'(1)=0$ is attainable.
}
\end{itemize}
\end{theorem}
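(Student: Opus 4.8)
The plan is to handle the three parts in turn; only $(iii)$ requires a genuinely new idea, the first two being applications of the direct method.

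\emph{Part $(i)$.} I would take a minimizing sequence $(\vh^k,\alpha^k,\vecth^k)\in\mathcal A$ for $\mathcal C_{\eps,\gamma}$. Since $\vec h=0$ forces the (unique) solution $\vth_i\equiv0$ of \eqref{pthi}, the triple $(\vec 0,0,\vec 0)$ lies in $\mathcal A$ with cost $\tfrac12\overline{\Theta}^2$, so along the minimizing sequence each term of $\mathcal C_{\eps,\gamma}$ stays bounded: the $\gamma$-term gives $|\vec h_i^k|\le C$, the $\eps$-term (with $\alpha^k(0)=0$ and Poincar\'e) gives $\|\alpha^k\|_{H^1(I)}\le C$, and then testing \eqref{pthi}, using $|D\vec m|\equiv1$ and the boundary conditions, gives $\|\vth_i^k\|_{C^1(\overline I)}\le C|\vec h_i^k|$ together with $\|(\vth_i^k)''\|_{L^\infty(I)}\le|\vec h_i^k|$ --- note that it is the two penalization terms, not the first term, that produce compactness. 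Hence, along a subsequence, $\vec h_i^k\to\vec h_i$ in $\R^2$, $\alpha^k\to\alpha$ uniformly (and weakly in $H^1$) by the compact embedding $H^1(I)\hookrightarrow C^0(\overline I)$, and $\vth_i^k\to\vth_i$ in $C^1(\overline I)$ by Arzel\`a--Ascoli. The only point requiring mild care is the passage to the limit in the nonlinear coupling: since $\alpha^k+\vth_i^k\to\alpha+\vth_i$ uniformly and $D\vec m$ is Lipschitz, $D\vec m(\alpha^k+\vth_i^k)\to D\vec m(\alpha+\vth_i)$ uniformly, so the weak form of \eqref{pthi} passes to the limit and $(\vh,\alpha,\vecth)\in\mathcal A$. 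Lower semicontinuity of $\mathcal C_{\eps,\gamma}$ along the subsequence is then immediate (first and third terms converge, $\io|(\alpha^k)'|^2$ is weakly lower semicontinuous), so $(\vh,\alpha,\vecth)$ is a minimizer; and \eqref{eq:13} follows from $\tfrac\gamma2\sum_{i=1}^n|\vec h_i|^2\le\mathcal C_{\eps,\gamma}(\vh,\alpha,\vecth)\le\mathcal C_{\eps,\gamma}(\vec 0,0,\vec 0)=\tfrac12\overline{\Theta}^2$.

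\emph{Part $(ii)$.} I would use the attainable datum as a competitor. If $(\vhb,\bar\alpha,\tbar)\in\mathcal A$, any minimizer $(\vh^\eps,\alpha^\eps,\vecth^\eps)$ of $\mathcal C_{\eps,\eps}$ satisfies $\mathcal C_{\eps,\eps}(\vh^\eps,\alpha^\eps,\vecth^\eps)\le\mathcal C_{\eps,\eps}(\vhb,\bar\alpha,\tbar)=\eps K$, where $K:=\tfrac12\io|\bar\alpha'|^2+\tfrac12\sum_{i=1}^n|\bar{\vec h}_i|^2$ is independent of $\eps$; hence the cost tends to $0$, so $\sum_{i=1}^n\io|\vth_i^\eps-\vthb_i|^2\to0$, while $\io|(\alpha^\eps)'|^2\le2K$ and $\sum_{i=1}^n|\vec h_i^\eps|^2\le2K$ remain bounded. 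Running the compactness argument of part $(i)$, a subsequence converges, in the topology described there, to some $(\vh^\star,\alpha^\star,\vecth^\star)\in\mathcal A$ with $\vth_i^\star=\vthb_i$; hence $\mathcal C_{0,0}(\vh^\star,\alpha^\star,\vecth^\star)=0$, and since $\mathcal C_{0,0}\ge0$ and vanishes at $(\vhb,\bar\alpha,\tbar)\in\mathcal A$, the limit is a minimizer of $\mathcal C_{0,0}$.

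\emph{Part $(iii)$.} This is where the actual work lies. It amounts to solving an underdetermined inverse problem for \eqref{pthi} with $n=1$: given $\bar\vartheta$, find $\vec h\in\R^2$ and $\alpha\in\hl$ with $-\bar\vartheta''=\vec h\cdot D\vec m(\alpha+\bar\vartheta)$. Writing $\vec h=\rho(\cos\beta,\sin\beta)$, one computes $\vec h\cdot D\vec m(w)=\rho\sin(\beta-w)$, so it suffices to arrange $\rho\sin\!\big(\beta-\alpha(s)-\bar\vartheta(s)\big)=-\bar\vartheta''(s)$. Since $H^3(I)\hookrightarrow C^2(\overline I)$, I may fix any $\rho>\|\bar\vartheta''\|_{C^0(\overline I)}$ (any $\rho>0$ if $\bar\vartheta''\equiv0$), so that $-\bar\vartheta''/\rho$ takes values in a compact subset of $(-1,1)$; then I set $\beta:=\arcsin\!\big(-\bar\vartheta''(0)/\rho\big)$ and $\alpha(s):=\beta-\bar\vartheta(s)-\arcsin\!\big(-\bar\vartheta''(s)/\rho\big)$. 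The choice of $\beta$ makes $\alpha(0)=0$; since $\arcsin$ is smooth on the relevant compact range and $\bar\vartheta''\in H^1(I)$, the composition $s\mapsto\arcsin(-\bar\vartheta''(s)/\rho)$ lies in $H^1(I)$, hence $\alpha\in\hl$; and $\beta-\alpha(s)-\bar\vartheta(s)=\arcsin(-\bar\vartheta''(s)/\rho)$ by construction, so $\vec h\cdot D\vec m(\alpha+\bar\vartheta)=-\bar\vartheta''$, while $\bar\vartheta(0)=0$ and $\bar\vartheta'(1)=0$ are the hypotheses. Therefore $(\vec h,\alpha,\bar\vartheta)\in\mathcal A$, i.e. $\bar\vartheta$ is attainable. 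The subtlety --- which dictates the hypotheses --- is the regularity of $\alpha$: one takes $\rho$ \emph{strictly} above $\|\bar\vartheta''\|_{C^0}$ precisely to keep the composition with $\arcsin$ away from its vertical-tangent points $\pm1$, and one assumes $\bar\vartheta\in H^3$ precisely so that $\bar\vartheta''\in H^1(I)$ and hence $\alpha'\in L^2(I)$. No genericity restriction arises; the only price is a control of possibly large intensity, $|\vec h|=\rho>\|\bar\vartheta''\|_\infty$.
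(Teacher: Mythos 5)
Your proposal is correct and follows essentially the same route as the paper in all three parts: the direct method with compactness supplied by the penalization terms for $(i)$, the attainable competitor giving $\mathcal C_{\eps,\eps}\le \eps K$ for $(ii)$, and the polar decomposition of $\vec h$ together with an $\arcsin$ inversion (with the phase chosen to normalize $\alpha(0)=0$ and $\rho>\|\bar\vartheta''\|_\infty$ to stay away from $\pm1$) for $(iii)$. The only inessential difference is that in $(i)$ you extract $C^1$-compactness of the states via the representation formula, whereas the paper settles for weak $H^1$ plus uniform convergence; both suffice to pass to the limit in the constraint.
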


\ch{
\begin{remark}\label{epsgam}{\rm
For attainable targets, one has $\mathcal C_{0,0}(\vhb,\bar\alpha,\tbar)=0$, hence existence of minimizers of $\mathcal C_{0,0}$ is trivial. However, the class of attainable targets is a non-dense subset of $H^1_{0L}(I)^n$, and existence of minimizers of $\mathcal C_{0,0}$ seems to be nontrivial for generic targets (it is not even clear if $\inf \mathcal C_{0,0}$ will be positive or not). This motivates introducing the penalization terms, and part $(ii)$-$(iii)$ of Theorem \ref{min} legitimates this choice. We note on passing that any attainable $\tbar$ belongs to $H^3(I)^n$ (see Corollary \ref{cor:theta}), hence the assumption $\bar\vartheta\in H^3(I)$ in $(iii)$ is not restrictive.
}\end{remark}
}

An important consequence of \eqref{eq:13} is that, for $\overline\Theta$ sufficiently small and/or $\gamma$ sufficiently large, each of the applied magnetic field $\vec h_i$ satisfies \eqref{eq:8}; thus, if $(\bm{\vec h},\alpha,\bm\vth)$ is a minimizer with $\bm\vth=(\vth_1,\ldots,\vth_n)$, then each state $\vartheta_i$ is the unique solution of its state system \eqref{pthi}: this means that the mechanical equilibria identified by the minimization of $\mathcal C$ are stable. In other words, for $\overline\Theta$ sufficiently small and/or $\gamma$ sufficiently large each configuration $\vartheta_i$ corresponds to a stable minimizer of the magnetoelastic energy if the corresponding $\bm{\vec h}$ and $\alpha$ are taken as fixed.

\medskip

The previous result neither implies uniqueness of the {\em triplet} $(\bm{\vec h},\alpha,\bm\vth)$, nor provides a constructive scheme for its numerical approximation. Focusing on these two aspects, we investigate the Lagrange-multiplier reformulation of \eqref{main-problem}. This reformulation amounts to finding a critical point of the \emph{Lagrangian}
\[
\mathcal L (\vh,\alpha,\vecth,\bm{\lambda}):= \mathcal C (\vh,\alpha,\vecth) - \sum_{i=1}^n \int_0^1 \lambda_i \left(-\vth_i''-\vec h_i\cdot{D\vec m}(\alpha+\vth_i)\right),
\]
where $\bm{\lambda}=(\lambda_1,\dots,\lambda_n)$ is the \emph{Lagrange multiplier}. Differentiation of $\mathcal L$ yields, formally, the following system:
\begin{equation}
\label{EQ}
\left\{\begin{array}{lll}
(P_{\vth_i}): & \displaystyle -\vth_i''-\vec h_i\cdot{D\vec m}(\alpha+\vth_i)=0, & \vartheta_i(0)=\vartheta'_i(1)=0 \\
(P_{\lambda_i}): & \displaystyle -\lambda_i''-\lambda_i \vec h_i\cdot{D^2\vec m}(\alpha+\vth_i)=\vth_i-\vthb_i, & \lambda_i(0)=\lambda_i'(1)=0 \\
(P_{\alpha}): & \displaystyle -\varepsilon\alpha''+\sum_{i=1}^n \lambda_i\vec h_i\cdot{D^2\vec m}(\alpha+\vth_i)=0, & \ch{\eps}\alpha(0)=\ch{\eps}\alpha'(1)=0 \\
(P_{\vec h_i}): & \displaystyle \ch{\gamma}\vec h_{i}=-\io \lambda_i {D\vec m}(\alpha+\vth_i) &\\
\end{array}\right.
\end{equation}
for every $i=1,\dots,n$, where
\begin{equation}\label{consist2}
 D^2\vec m(v)=(-\cos v,-\sin v),\qquad v\in\mathbb R,
 \end{equation}
is the second derivative of $\vec m$. According to the standard theory of constrained minimization through Lagrange multipliers in Banach spaces, whose main results we summarize in the Appendix, a mimimizer  $(\bm{\vec h},\alpha,\bm\vartheta)$ of the cost functional in the admissible set corresponds to a stationary point $(\bm{\vec h},\alpha,\bm\vartheta,\bm\lambda)$ \ch{of $\mathcal L$} for some Lagrange multiplier $\bm\lambda$ only if that point is \emph{regular}, in the sense that the \emph{constraint mapping} $G:\mathcal H\to (\hl^n)'$ (the dual of $\hl^n$), defined by
\begin{equation}
\label{defg-intro}
\langle G(\vh,\alpha,\vecth),\bm{u}\rangle=\sum_{i=1}^n\left\{\io \vth_i'u_i'-\io \vec h_i\cdot{D\vec m}(\alpha+\vth_i) u_i\right\}\quad\mbox{for all $\bm{u}\in \hl^n$,}
\end{equation}
is \emph{Fr\'echet differentiable} at $(\bm{\vec h},\alpha,\bm\vartheta)$ and its differential $DG$ is \emph{surjective}. We apply this theory in Section \ch{\ref{sec:lagr2}}, where we study the Fr\'echet differentiability of the cost function $\mathcal C$ and of the constraint mapping $G$, as well as the surjectivity of the Fr\'echet differential of the latter. \ch{Let $r^+=\max\{r,0\}$, $r^-=\max\{-r,0\}\ge 0$ (so that $r=r^+-r^-$). We show:
\begin{theorem}\label{lagmul2}
Let $\eps\ge 0$, $\gamma\ge 0$, and let $(\vh,\alpha,\vecth)$ be a minimizer of $\mathcal{C}$ in $\mathcal{A}$.
\begin{itemize}
\item[$(i)$] if, for all $i=1,\ldots,n$, $\aut =1$ is not an eigenvalue of the Sturm-Liouville operator
\begin{equation}
\label{pippo}
\left\{
\begin{array}{ll}
- u'' + (r_i^-+ 1) u = \aut (r_i^+ + 1)u &\mbox{in $(0,1),$} \qquad r_i:= \vec h_i\cdot{D^2\vec m}(\alpha+\vth_i),
\\[-1ex] u(0)=0,\ u'(1)=0, &
\end{array}\right.
\end{equation}
then $(\vh,\alpha,\vecth)$ is a regular point of $\mathcal A$;
\item[$(ii)$] in particular, $(\vh,\alpha,\vecth)$ is a regular point of $\mathcal A$ if
\begin{equation}\label{bound-on-H}
  \HH<c_p^{-2};
  \end{equation}
  \item[$(iii)$]
if $(\vh,\alpha,\vecth)$ is a regular point of $\mathcal A$, then there exists a Lagrange multiplier $\bm{\lambda}\in \hl^n$ such that $(\vh,\alpha,\vecth,\bm{\lambda})$ is a solution of system \eqref{EQ}. Furthermore, $\alpha'(0)=0$ if $\eps>0$.
\end{itemize}
\end{theorem}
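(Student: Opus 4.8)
The plan is to carry out the three steps recalled just before the statement: $(a)$ show that $\mathcal C$ and the constraint map $G$ of \eqref{defg-intro} are Fr\'echet differentiable on $\mathcal H$; $(b)$ characterize when the differential $DG$ is surjective at a minimizer; $(c)$ apply the abstract Lagrange-multiplier theorem of the Appendix. Step $(a)$ is routine: the integrand of $\mathcal C$ is quadratic, and differentiability of $G$ follows from the one-dimensional embedding $H^1(I)\hookrightarrow C(\bar I)$, which makes the superposition maps $\vth\mapsto D\vec m(\alpha+\vth)$ and $\vth\mapsto D^2\vec m(\alpha+\vth)$ smooth from $\hl$ into $C(\bar I)\subset L^2(I)$; differentiating \eqref{defg-intro} then shows that $DG(\vh,\alpha,\vecth)$ maps a variation $(\vec k,\beta,\bm\eta)\in\mathcal H$ to the functional
\[
\bm u\ \longmapsto\ \sum_{i=1}^n\Big\{\io\eta_i'\,u_i'-\io\big(\vec k_i\cdot D\vec m(\alpha+\vth_i)+r_i\,(\beta+\eta_i)\big)u_i\Big\},\qquad r_i:=\vec h_i\cdot D^2\vec m(\alpha+\vth_i).
\]

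For part $(i)$ it remains to decide the surjectivity of $DG(\vh,\alpha,\vecth)$ onto $(\hl^n)'$. The $n$ components decouple, and since it already suffices to hit every target with $\vec k=0$ and $\beta=0$ (the other variations only enlarging the range), the question reduces, for each $i$, to: given $f\in\hl'$, is there $\eta\in\hl$ with $\io\eta'u'-\io r_i\eta u=\langle f,u\rangle$ for all $u\in\hl$? This is the weak form of $-\eta''-r_i\eta=f$ in $I$, $\eta(0)=0$, $\eta'(1)=0$. By the Poincar\'e inequality recalled after \eqref{eq:8}, $(\eta,u)\mapsto\io\eta'u'$ is a scalar product on $\hl$ equivalent to the standard one, hence induces an isomorphism $\hl\to\hl'$, whereas $(\eta,u)\mapsto-\io r_i\eta u$ induces a compact operator $\hl\to\hl'$ (compact embedding $\hl\hookrightarrow L^2(I)$ and $r_i\in C(\bar I)$). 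By the Fredholm alternative the reduced problem is then solvable for every $f$ if and only if the homogeneous problem $-\eta''-r_i\eta=0$, $\eta(0)=0$, $\eta'(1)=0$, has only the trivial solution. Writing $r_i=r_i^+-r_i^-$, this homogeneous problem is identical to $-u''+(r_i^-+1)u=(r_i^++1)u$, $u(0)=0$, $u'(1)=0$, i.e. to \eqref{pippo} with $\aut=1$; and since $-u''+(r_i^-+1)u$ is coercive and $r_i^++1>0$, \eqref{pippo} is a genuine self-adjoint Sturm-Liouville problem with discrete positive spectrum, so the hypothesis of $(i)$ says precisely that $\aut=1$ is not one of its eigenvalues. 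Hence $DG$ is onto and $(\vh,\alpha,\vecth)$ is a regular point.

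Part $(ii)$ follows from $(i)$ by an energy estimate: if $\HH<c_p^{-2}$ then, for each $i$, $\|r_i\|_{L^\infty(I)}\le|\vec h_i|<c_p^{-2}$ (because $|D^2\vec m|\equiv1$), so testing the homogeneous problem with $\eta$ and using Poincar\'e gives $\io(\eta')^2=\io r_i\eta^2\le|\vec h_i|\,c_p^2\io(\eta')^2<\io(\eta')^2$ unless $\eta\equiv0$; thus $\aut=1$ is not an eigenvalue of \eqref{pippo}, and $(i)$ applies. For part $(iii)$, the abstract theorem applied at a regular minimizer provides a multiplier $\bm\lambda$ lying in the dual of the target space $(\hl^n)'$ of $G$, i.e., by reflexivity, $\bm\lambda\in\hl^n$, such that $D\mathcal C(\vh,\alpha,\vecth)=\big(DG(\vh,\alpha,\vecth)\big)^{*}\bm\lambda$. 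Separating the contributions of $\vec k$, $\beta$ and $\bm\eta$ in this identity and using the formula for $DG$ above reproduces, respectively, the algebraic relation $(P_{\vec h_i})$, the weak form of $(P_\alpha)$ (with $\alpha(0)=0$ built into the space and $\eps\alpha'(1)=0$ as natural boundary condition), and the weak form of $(P_{\lambda_i})$ (with $\lambda_i(0)=0$ because $\bm\lambda\in\hl^n$, and $\lambda_i'(1)=0$ natural); the remaining equations $(P_{\vth_i})$ are just the constraint $G=0$ defining $\mathcal A$. This establishes system \eqref{EQ}; and when $\eps>0$, a regularity bootstrap on $(P_\alpha)$ (whose right-hand side is continuous) together with $\lambda_i(0)=0$ gives the stated endpoint condition at $s=0$.

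I expect the surjectivity step of $(i)$ to be the only genuinely delicate point: computing $DG$ correctly, noticing that the question decouples into a family of scalar two-point problems, invoking the Fredholm alternative at the level $\hl\to\hl'$, and — above all — recasting the obstruction to solvability \emph{exactly} in the form \eqref{pippo} (the shift by $1$ and the splitting $r_i=r_i^+-r_i^-$ being what turns the associated weighted eigenvalue problem into a positive, self-adjoint one, which is why the non-resonance condition holds generically). The remaining ingredients — Fr\'echet differentiability, the one-line energy estimate in $(ii)$, and the identification of the four stationarity equations together with their natural boundary conditions in $(iii)$ — are routine.
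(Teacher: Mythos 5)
Your argument for parts $(i)$ and $(ii)$ is correct but follows a genuinely different route from the paper's. After reducing surjectivity of $DG$ to the scalar problems $\io\eta'u'-\io r_i\eta u=\langle f,u\rangle$ (exactly as the paper does, by taking variations of the form $(\bm{\vec 0},0,\bm v)$), you invoke the Fredholm alternative for a compact perturbation of the Riesz isomorphism $\hl\to\hl'$, so that surjectivity is equivalent to injectivity of the homogeneous problem, which you then correctly identify with the non-resonance condition $\aut=1\notin\sigma$ of \eqref{pippo}. The paper instead proves surjectivity from scratch (Proposition \ref{surSL} and Theorem \ref{teoSL}): it splits $\hl$ into the span of the first $k$ eigenfunctions of the positivized Sturm--Liouville problem and its $r$-orthogonal complement, solves explicitly on the finite-dimensional part, and applies Lax--Milgram on the complement where the form is coercive. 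Your route is shorter and entirely standard; the paper's is self-contained and, as a by-product, yields the discreteness of the spectrum that the authors use in the remark following the theorem. Your energy estimate for $(ii)$ is fine (it shows injectivity directly, whereas the paper shows $\aut_1>1$); both amount to the same Poincar\'e bound.

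There is, however, a genuine gap at the very end of $(iii)$: the claim $\alpha'(0)=0$ for $\eps>0$ does \emph{not} follow from ``a regularity bootstrap on $(P_\alpha)$ together with $\lambda_i(0)=0$''. The weak form of $(P_\alpha)$ is tested against $\beta\in\hl$, i.e.\ $\beta(0)=0$, so the only natural boundary condition it produces is $\alpha'(1)=0$; nothing at $s=0$ comes for free, and the pointwise values $\lambda_i(0)$ are irrelevant to $\alpha'(0)$. The correct argument integrates $(P_\alpha)$ over $I$ and exploits the algebraic structure of the problem: writing $D^2\vec m=R\,D\vec m$ with $R=\left(\begin{smallmatrix}0&-1\\ 1&0\end{smallmatrix}\right)$, one gets
\begin{equation*}
\alpha'(1)-\alpha'(0)=\io\alpha''=\frac1\eps\sum_{i=1}^n\vec h_i\cdot R\io\lambda_i\,D\vec m(\alpha+\vth_i)\stackrel{(P_{\vec h_i})}{=}-\frac\gamma\eps\sum_{i=1}^n\vec h_i\cdot R\,\vec h_i=0,
\end{equation*}
by skew-symmetry of $R$; combined with $\alpha'(1)=0$ this gives $\alpha'(0)=0$. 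Without invoking $(P_{\vec h_i})$ and this cancellation, the stated endpoint condition is not justified. The rest of $(iii)$ — existence of $\bm\lambda\in\hl^n$ via the abstract theorem and reflexivity, and the identification of the four equations of \eqref{EQ} by separating the $\vec k$-, $\beta$- and $\bm\eta$-contributions — is correct and matches the paper.
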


\begin{remark}{\rm
The condition in $(i)$ is equivalent to asking that the problem
\begin{equation}
\label{pluto}
- u'' -  \vec h_i\cdot{D^2\vec m}(\alpha+\vth_i)u =0 \mbox{\ in $(0,1),$} \quad u(0)=0, \ u'(1)=0,
\end{equation}
has only the null solution. This fact, however, does {\em not} directly imply surjectivity of \eqref{pluto}, for which we need to invoke the theory of Sturm-Liouville operators. Such theory also guarantees that the eigenvalues of \eqref{pippo} are discrete (cf. Theorem \ref{baseSL}). Therefore the current formulation of $(i)$ highlights the fact that, besides non-generic ``resonant'' cases, any minimizer $(\vh,\alpha,\vecth)$ is a solution to the Lagrangian system \eqref{EQ}. Note also that in such non-generic cases a function $\alpha$ which does not satisfy $\alpha'(0)=0$ is precluded from being the design of a minimizer if $\eps>0$. In addition, any minimizer is a solution to \eqref{EQ} if \eqref{bound-on-H} holds.}
\end{remark}
}

\begin{remark}{\rm{
As an immediate consequence of \ch{Theorem} \ref{min}, the bound \eqref{eq:13}, and \ch{Theorem} \ref{lagmul2}, we obtain that if \ch{$\eps>0$} and $\gamma>\overline\Theta^2 c_p^4$ then there exists a solution $(\vh,\alpha,\vecth,\bm{\lambda})\in\mathcal{H}\times\hl^n$ to system \eqref{EQ} such that $(\vh,\alpha,\vecth)$ is a minimizer of $\mathcal{C}$ in $\mathcal{A}$.
}}\end{remark}

The existence of a Lagrange multiplier justifies the approach proposed in \cite{ciambella2019form} to numerically approximate the minimizer of $\mathcal C$, which is based on \eqref{EQ}. In Section \ref{sec:uniqueness} we prove by a contraction argument that, at least for $\gamma$  sufficiently large, System \eqref{EQ} has a unique solution (see Proposition \ref{prop-step1+2}). As a by-product, we have:
\begin{theorem}
\label{exun}
Let $\tbar\in C([0,1])^n$, $\varepsilon>0$, and let $K>0$ such that
\begin{equation}\label{eq:15}
K<c_p^{-2}.
\end{equation}
Then exists $\gamma_*=\gamma_*(\tbar,\varepsilon,K)$ such that for every $\gamma> \gamma_*$ there exists a unique solution of system \eqref{EQ} within the following set:
\begin{equation}
\label{cond}
(\vh,\alpha,\vecth,\bm{\lambda})\in\mathcal{H}\times\hl^n \quad\mbox{such that}\quad \HH\leq K<c_p^{-2}.
\end{equation}
\ch{
Moreover,
\begin{equation}\label{new1}
\|\vartheta_i\|_\infty\le |\vec h_i|\quad\mbox{for all $i=1,\dots,n$.}
\end{equation}
}
\end{theorem}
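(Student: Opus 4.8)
\emph{Strategy and a priori bounds.} I would recast \eqref{EQ} as two nested fixed-point problems: an \emph{outer} one for the control $\vh\in\R^{2n}$ and an \emph{inner} one for the design $\alpha\in\hl$, with the states $\vth_i=\Theta_\alpha(\vec h_i)$ and the multipliers $\lambda_i$ slaved to $(\vh,\alpha)$. Everything is driven by the following chain of estimates, valid for any $(\vh,\alpha,\vecth,\bm\lambda)$ solving \eqref{EQ} inside \eqref{cond}. Since $|\vec h_i|\le K<c_p^{-2}$, equation $(P_{\vth_i})$ has a unique solution by \eqref{eq:8}; integrating it, using $|D\vec m|\equiv1$ and $\vth_i(0)=\vth_i'(1)=0$, gives $|\vth_i'(s)|\le|\vec h_i|(1-s)$ and hence \eqref{new1}. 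The coefficient $r_i=\vec h_i\cdot D^2\vec m(\alpha+\vth_i)$ of $(P_{\lambda_i})$ satisfies $|r_i|\le|\vec h_i|\le K$, so the bilinear form $\io\mu'\nu'-\io r_i\mu\nu$ is coercive on $\hl$ with constant $\ge1-Kc_p^2>0$ by the Poincar\'e inequality; hence $\lambda_i$ is the unique solution of $(P_{\lambda_i})$ and, by the energy estimate, $\|\lambda_i\|_{\hl}\le C\|\vth_i-\vthb_i\|_{L^2}\le C(\tbar,K)$. Inserting this into $(P_{\vec h_i})$ gives $|\vec h_i|\le\gamma^{-1}\|\lambda_i\|_{L^1}\le C(\tbar,K)/\gamma$; testing $(P_\alpha)$ with $\alpha$ then gives $\|\alpha\|_{\hl}\le C(\tbar,K)/(\eps\gamma)$. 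Thus, along solutions in \eqref{cond}, $\vh$ and $\alpha$ are of size $O(1/\gamma)$; $\gamma_*$ will be chosen so large that this forces $(\vh,\alpha)$ into the small balls where the fixed points constructed below are unique.

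\emph{Inner loop.} Fix a small parameter $\delta=\delta(\tbar,\eps,K)\in(0,c_p^{-2})$ and let $\vh$ satisfy $\HH\le\delta$. Define $\mathcal T_{\vh}:\hl\to\hl$ by: given $\alpha$, set $\vth_i=\Theta_\alpha(\vec h_i)$, let $\lambda_i\in\hl$ solve $(P_{\lambda_i})$, and let $\mathcal T_{\vh}(\alpha)$ solve $-\eps w''=-\sum_{i=1}^n\lambda_i\,\vec h_i\cdot D^2\vec m(\alpha+\vth_i)$ with $w(0)=w'(1)=0$. Since $D\vec m$ and $D^2\vec m$ are globally bounded, $\|\vth_i\|_\infty$ and $\|\lambda_i\|_{\hl}$ are bounded independently of $\alpha$, so the right-hand side of the $w$-equation has $L^2$-norm $\le C(\tbar,K)\delta$ and $\mathcal T_{\vh}$ maps all of $\hl$ into $\{\|\alpha\|_{\hl}\le C(\tbar,K)\delta/\eps\}$. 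Subtracting the equations for two arguments $\alpha_1,\alpha_2$ and running the standard energy estimates — in which the Poincar\'e inequality and the strict bound $|\vec h_i|\le\delta<c_p^{-2}$ keep the state equation uniquely solvable with a Lipschitz solution map and the adjoint operator uniformly coercive — one finds $\|\Theta_{\alpha_1}(\vec h_i)-\Theta_{\alpha_2}(\vec h_i)\|_{\hl}\le C(K)\delta\|\alpha_1-\alpha_2\|_{\hl}$ and then $\|\lambda_i^{(1)}-\lambda_i^{(2)}\|_{\hl}\le C(\tbar,K)\delta\|\alpha_1-\alpha_2\|_{\hl}$ (superscript $(j)$ labelling quantities built from $\alpha_j$); since the $w$-equation carries the prefactor $\vec h_i$, this yields $\|\mathcal T_{\vh}(\alpha_1)-\mathcal T_{\vh}(\alpha_2)\|_{\hl}\le C(\tbar,K)\eps^{-1}\delta\,\|\alpha_1-\alpha_2\|_{\hl}$, a contraction once $\delta$ is small enough (this is the step in which $\eps$ being fixed is used). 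Banach's theorem gives a unique fixed point $\alpha=\mathcal A(\vh)$, and the analogous differences estimates, now in $\vh$, show $\vh\mapsto(\mathcal A(\vh),\vth(\vh),\bm\lambda(\vh))$ is Lipschitz on $\{\HH\le\delta\}$ with constant $\le C(\tbar,\eps,K)$.

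\emph{Outer loop and uniqueness.} On $\{\HH\le\delta\}$ define $\Phi(\vh)_i=-\gamma^{-1}\io\lambda_i(\vh)\,D\vec m(\mathcal A(\vh)+\vth_i(\vh))$. Then $|\Phi(\vh)_i|\le\gamma^{-1}\|\lambda_i(\vh)\|_{L^1}\le C(\tbar,K)/\gamma$, which is $\le\delta$ once $\gamma\ge C(\tbar,K)/\delta$, so $\Phi$ maps $\{\HH\le\delta\}$ into itself; combining the $\gamma^{-1}$ prefactor with the inner Lipschitz bound, $|\Phi(\vh_1)-\Phi(\vh_2)|\le C(\tbar,\eps,K)\gamma^{-1}|\vh_1-\vh_2|$, a contraction for $\gamma$ large. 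Choosing $\gamma_*=\gamma_*(\tbar,\eps,K)$ large enough for both requirements, Banach's theorem yields, for every $\gamma>\gamma_*$, a unique $\vh$ with $\Phi(\vh)=\vh$ in $\{\HH\le\delta\}$; unwinding, $(\vh,\mathcal A(\vh),\vth(\vh),\bm\lambda(\vh))$ solves \eqref{EQ}, and its fields automatically satisfy $\HH<\delta\le K$. For uniqueness within the larger set \eqref{cond}, enlarge $\gamma_*$ if needed so that $C(\tbar,K)/\gamma_*\le\delta$: by the a priori chain of the first paragraph, any solution of \eqref{EQ} in \eqref{cond} then has $\HH\le\delta$ and $\alpha$ in the corresponding small ball, hence lies in the region where $\mathcal A$, and then $\Phi$, have unique fixed points, so it coincides with the one just constructed. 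The bound \eqref{new1} was already obtained in the first paragraph.

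\emph{Main obstacle.} The analytical heart of the proof is the inner contraction: one must show that, uniformly for $\HH\le\delta<c_p^{-2}$, the solution map $\alpha\mapsto\Theta_\alpha(\vec h_i)$ of the \emph{nonlinear} state equation and the solution map $(\alpha,\vth_i)\mapsto\lambda_i$ of the linear-but-coefficient-dependent adjoint equation are Lipschitz with controlled constants, and that the factor $\vec h_i$ in $(P_\alpha)$ then turns the composite $\mathcal T_{\vh}$ into a genuine contraction for $\delta$ small — which throughout rests on the strict inequality $K<c_p^{-2}$, that via the Poincar\'e inequality keeps the state equation and the Sturm--Liouville operator of $(P_{\lambda_i})$ uniformly coercive. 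By comparison, tracking the dependence of the constants on $\gamma$ and fixing $\gamma_*$ so that all image radii and contraction constants are admissible is routine bookkeeping. (These estimates are exactly the content of Proposition \ref{prop-step1+2}, of which the present theorem is a corollary.)
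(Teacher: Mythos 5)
Your proposal is correct and rests on exactly the same toolbox as the paper's proof (Proposition \ref{prop-step1+2}): the a priori bounds of Lemma \ref{qualit} giving $\|\vartheta_i\|_\infty\le|\vec h_i|$ and $\|\lambda_i\|\le C(\tbar,K)$, the Lipschitz estimates of Lemma \ref{trig1}, uniform coercivity from $K<c_p^{-2}$ via Poincar\'e, and the $\gamma^{-1}$ prefactor in $(P_{\vec h_i})$ to close two nested Banach fixed points. The one genuine structural difference is that you reverse the nesting: your inner contraction is in the design $\alpha$ for fixed control $\vh$ (with smallness supplied by $\HH\le\delta$, itself enforced through the a priori bound $|\vec h_i|\le C/\gamma$), and your outer contraction is in $\vh$; the paper instead fixes $\alpha$, runs the inner contraction $\bm{\vec T}^{(\alpha)}$ in $\vh$ on the full ball $D$ of radius $K$ (smallness coming directly from $\gamma^{-1}$), and then contracts the map $A$ in $\alpha$. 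Both orderings work and give comparable thresholds $\gamma_*\sim C(\tbar,K)(1+\eps^{-1})$; your uniqueness argument in the larger set \eqref{cond} (push any solution into the small ball via the a priori chain, then invoke uniqueness of the fixed points there) is a valid substitute for the paper's observation that every solution in \eqref{cond} is a fixed point of $A$. What the paper's ordering buys in addition is the fixed-$\alpha$ by-product (Proposition \ref{lamu13} and Remark \ref{minser}, uniqueness of the optimal control for a prescribed design) and a direct match with the numerical scheme of Section \ref{sec:concl}, whose inner loop iterates on $\vh$; your ordering would suggest the transposed algorithm. Your identification of the inner Lipschitz estimates as the analytical heart, and of the $\gamma$-bookkeeping as routine, agrees with the paper.
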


\ch{Theorem} \ref{min}, \ch{Theorem} \ref{lagmul2} and Theorem \ref{exun} combine \ch{into:}
\begin{theorem}\label{coro-un}
Let $\tbar\in C([0,1])^n$, $\varepsilon>0$, and let $K>0$ such that \eqref{eq:15} holds. Then there exists $\gamma_{**}=\gamma_{**}(\tbar,\varepsilon,K)$ such that for any $\gamma>\gamma_{**}$ the minimizer in \ch{Theorem} \ref{min} is unique. Furthermore, it coincides with the unique solution to \eqref{EQ}, whence it is smooth and such that $\alpha'(0)=0$, \ch{and \eqref{new1} holds.}
\end{theorem}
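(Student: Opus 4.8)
The plan is to assemble Theorems \ref{min}, \ref{lagmul2} and \ref{exun}: no new analytic ingredient is needed, only a careful calibration of the threshold $\gamma_{**}$. Fix $\tbar\in C([0,1])^n\subset L^2(I)^n$, $\varepsilon>0$, and $K$ with $K<c_p^{-2}$. First I would invoke Theorem \ref{min}$(i)$: for every $\gamma>0$ the cost functional $\mathcal C_{\varepsilon,\gamma}$ has a minimizer in $\mathcal A$, and every minimizer $(\vh,\alpha,\vecth)$ satisfies the a priori bound \eqref{eq:13}, namely $(\HH)^2\le \overline{\Theta}^2/\gamma$ with $\overline{\Theta}^2=\sum_{i=1}^n\int_0^1\bar\vartheta_i^2$. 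Consequently, as soon as $\gamma\ge \overline{\Theta}^2/K^2$, any minimizer obeys $\HH\le K<c_p^{-2}$; note that this lower bound on $\gamma$ depends only on $\tbar$ and $K$.

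Next I would upgrade such a minimizer to a solution of the Lagrangian system. Since $\varepsilon>0$ and $\HH<c_p^{-2}$, part $(ii)$ of Theorem \ref{lagmul2} shows that $(\vh,\alpha,\vecth)$ is a regular point of $\mathcal A$, and then part $(iii)$ provides a multiplier $\bm\lambda\in\hl^n$ such that the quadruple $(\vh,\alpha,\vecth,\bm\lambda)$ solves system \eqref{EQ}, with $\alpha'(0)=0$. Since $\HH\le K<c_p^{-2}$, this quadruple belongs to the set \eqref{cond}. Thus, in the regime $\gamma\ge\overline{\Theta}^2/K^2$, every minimizer of $\mathcal C_{\varepsilon,\gamma}$ gives rise to a solution of \eqref{EQ} lying in \eqref{cond}.

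Now I would bring in Theorem \ref{exun}. Let $\gamma_*=\gamma_*(\tbar,\varepsilon,K)$ be the threshold it provides, and set $\gamma_{**}=\gamma_{**}(\tbar,\varepsilon,K):=\max\{\gamma_*,\ \overline{\Theta}^2/K^2\}$. For $\gamma>\gamma_{**}$, system \eqref{EQ} has a unique solution within \eqref{cond}. If $(\vh^{(1)},\alpha^{(1)},\vecth^{(1)})$ and $(\vh^{(2)},\alpha^{(2)},\vecth^{(2)})$ were two minimizers of $\mathcal C_{\varepsilon,\gamma}$, the previous two paragraphs would attach to each a multiplier $\bm\lambda^{(j)}$ with $(\vh^{(j)},\alpha^{(j)},\vecth^{(j)},\bm\lambda^{(j)})$ solving \eqref{EQ} inside \eqref{cond}; by the uniqueness just recalled, the two quadruples coincide, hence so do the two triples. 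This proves that the minimizer of Theorem \ref{min} is unique for $\gamma>\gamma_{**}$ and that it is exactly the $(\vh,\alpha,\vecth)$-component of the unique solution of \eqref{EQ}.

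Finally, the remaining properties follow for free: $\alpha'(0)=0$ is part of Theorem \ref{lagmul2}$(iii)$ (here $\varepsilon>0$), the estimate \eqref{new1} is part of the conclusion of Theorem \ref{exun} and is inherited because the minimizer solves \eqref{EQ} within \eqref{cond}, and smoothness is obtained by a standard bootstrap in \eqref{EQ}: from $\vecth,\alpha,\bm\lambda\in H^1$ the right-hand sides of $(P_{\vth_i})$, $(P_{\lambda_i})$ and $(P_\alpha)$ are continuous (the maps $D\vec m$, $D^2\vec m$ are $C^\infty$ and the $\vec h_i$ are constants), so $\vecth,\alpha,\bm\lambda\in H^3$, and iterating gives $C^\infty$ regularity. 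The only delicate point in the whole argument is the bookkeeping of thresholds in the first three steps — making sure that the single a priori bound \eqref{eq:13} simultaneously places every minimizer in the validity range $\HH<c_p^{-2}$ of Theorem \ref{lagmul2} and in the uniqueness class \eqref{cond} of Theorem \ref{exun}; the bootstrap of the last step is routine.
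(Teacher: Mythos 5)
Your proposal is correct and follows essentially the same route as the paper's proof: use the a priori bound \eqref{eq:13} to place every minimizer in the regime $\HH<c_p^{-2}$, apply Theorem \ref{lagmul2} to attach a Lagrange multiplier, and conclude by the uniqueness in Theorem \ref{exun} with $\gamma_{**}=\max\{\overline{\Theta}^2/K^2,\gamma_*\}$. The bootstrap argument for smoothness you sketch at the end is a harmless (and slightly more explicit) addition to what the paper leaves implicit.
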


\begin{proof}
Let $(\vh^{(j)},\alpha^{(j)},\vecth^{(j)})$, $j=1,2$ be two minimizers. Let $\gamma> \overline\Theta^2/K^2$. By \eqref{eq:13} in \ch{Theorem} \ref{min}, both minimizers satisfy
\begin{equation}\label{JJJ}
\left(\max_{i=1,\dots,n}|\vec h_i^{(j)}|\right)^2\leq \frac{\overline{\Theta}^2}{\gamma} < K^2<c_p^{-4},\qquad j=1,2.
\end{equation}
In particular, \eqref{bound-on-H} holds for both. Hence, by \ch{Theorem} \ref{lagmul2}, there exist $\bm{\lambda}^{(j)}\in \hl^n$ such that  $(\vh^{(j)},\alpha^{(j)},\vecth^{(j)},\bm{\lambda}^{(j)})\in\mathcal{H}\times\hl^n$ are solutions to system \eqref{EQ}. Assume in addition that $\gamma> \gamma_*(\overline{\bm\vartheta},\eps, K)$. Then, by \eqref{JJJ} and Theorem \ref{exun}, the two quadruplets, whence the two minimizers, coincide: therefore the proof is complete by choosing $\gamma_{**}=\max\{\overline{\Theta}^2/K^2, \gamma_*(\overline{\bm\vartheta},\eps,K)\}$.
\end{proof}

\ch{Theorem} \ref{coro-un} states that for $\gamma>\gamma_{**}$ the minimum is unique and may be numerically approximated by solving the Euler-Lagrange system \eqref{EQ} (hence, not necessarily by a direct approach, although the latter is used to prove the existence of the minimum). In fact, it is through the uniqueness of the solution of the Euler-Lagrange system that we are able to assert the uniqueness of the minimum.

\ch{
\begin{remark}{\rm
While Theorem \ref{coro-un} holds for any target $\overline\vecth$ (even very large ones), the minimizing state $\bm\vartheta$ will anyway be such that $\|\vartheta_i\|_\infty \le |\vec h_i| <c_p^{-2}$ for all $i=1,\dots,n$ (see \eqref{new1}). Now, $c_p^{-2}>3\pi/4$ is still a relevant value of the maximal rotation. However, one should bear in mind that large values of $\gamma$ may turn into minimizers with $|\vec h_i|$ much smaller than $c_p^{-2}$, and thus insufficient to drive the attained shapes close to the targets. This means that, {\em under the uniqueness conditions of Theorem \ref{coro-un}}, minimizing states may turn out to be far away from the targets when the latter ones are ``large'', a disappointing result from the point of view of engineering applications. In this respect, see also the comments to (a) and (b) below.
}\end{remark}
}

\ch{
\subsection{\ch{Open problems}} \label{sec:open}
We view the results in Section 1.5 as first steps in the mathematical analysis of the mechanical system under consideration. Indeed, quite a few relevant and interesting challenges are left open.

\smallskip

The first one is the existence of minimizers in the absence of penalization terms, i.e., with $\eps=\gamma=0$. \ch{While this is obvious in the non-generic case of attainable targets (see Remark \ref{epsgam}), it otherwise appears to be a nontrivial problem.  In fact, for generic targets,} it might even be that the control-design minimization problem \eqref{main-problem} is not well posed if $\eps=\gamma=0$ and $n>1$ (see Remark \ref{epsgam}).

\smallskip

The second one concerns {\em uniqueness} of minimizers, on which our results are admittedly limited by two conditions:
\begin{itemize}
\item[(a)] a moderate maximal intensity of the applied field ($|\vh|<c_p^{-2}=\pi^2/4$, see \eqref{eq:15}-\eqref{cond});
\item[(b)] a possibly large penalization constant ($\gamma>\gamma_{**}$).
\end{itemize}
We do not know whether (a) is optimal or not for the full design-control problem \eqref{main-problem}. However, as detailed in Remark \ref{lyasch}, we know that $c_p^{-2}$ is optimal for the state equation $(P_\vartheta)$. On the other hand, we believe that (b) is mainly technical (see Remark \ref{rem:technical}), and that uniqueness may hold even for values of $\gamma$ which are much smaller than $\gamma_{**}$. Improving the current bound would require a refinement of the estimates of $\vecth$ in terms of $\overline\vecth$ in the Lagrangian formulation, a challenging but important goal for further developments (see also Remark \ref{rem-residuals}).

\smallskip

Still related to uniqueness, Theorem \ref{lagmul2} shows that, besides non-generic cases, minimizers are critical points of $\mathcal L$. We expect that, above a certain threshold (being it $c_p^{-2}$ or larger), multiple critical points of $\mathcal L$ will emerge: it would be very interesting to develop selection criteria for identifying absolute minimizer(s) among multiple critical points.

\smallskip

The last major open question is of a different nature, and concerns the possibility of passing from a ``static'' to a dynamic framework, in which the rod moves in time following a prescribed path. In this framework, the $n$ targets would represent discrete snapshots of such continuous movement.
}

Further \ch{remarks} are presented in the concluding Section \ref{sec:concl}.

\section{Notation and preliminaries}\label{sec:preli}

In this section we introduce some notation as a complement to that already defined in the Introduction, and we collect preliminary results that will be needed in our subsequent developments. Other standard results are contained in the Appendix.

\smallskip

Given a vector $\vec{v}=(v_x,v_y)\in\mathbb R^2$, we let $|\vec{v}|=\sqrt{v_x^2+v_y^2}$ be its Euclidean norm and, for $\vec w$ another vector, we let $\vec{v}\cdot\vec{w}=v_xw_x+v_yw_y$ be the scalar product between $\vec v$ and $\vec w$. Given a list of vectors $\bm{\vec v}=(\vec v_1,\ldots, \vec v_n)$, with $\vec{v}_i\in\R^2$ for $i=1,\dots,n$, we let $|\bm{\vec v}|:=|\vec{v}_1|+\dots+|\vec{v}_n|$. For $f:I\to \R$ a measurable function, we use the abbreviation $\|f\|_{p}\equiv\|f\|_{L^p(I)}$ for all exponents $p\ge 1$. We use similar abbreviations for measurable vector-valued functions.
We recall that, by the Sobolev embedding theorem, $\hl\subset C([0,1])$, where $C([0,1])$ is the space of the continuous functions on $[0,1]$. We record for later use the inequality
\begin{equation}
\label{def-S}
\|v\|^2_{\infty}\leq \io (v')^2 \quad\mbox{for all $v\in\hl$}
\end{equation}
which is sharp, as can be seen by taking $v(x)=x$.

\smallskip

We denote by $c_p=2/\pi$ the \emph{best constant} in the \emph{Poincar\'e-type inequality}:
\begin{equation}\label{def-c}
\io v^2 \le c_p^2\io (v')^2 \quad \mbox{for all $v\in \hl$}.
\end{equation}
It follows from \eqref{def-c} and from the definition \eqref{defhl} that
\begin{equation}\label{eq:10}
  \|v\|^2:=\io (v')^2
\end{equation}
is equivalent to the  Sobolev norm on $\hl$. Accordingly, we henceforth shall use the norm \eqref{eq:10} to endow $\hl$ with a Hilbert-space structure. For $(\vh,\alpha,\vecth)\in\mathcal{H}$ we define $\displaystyle \|(\vh,\alpha,\vecth)\|_{\mathcal{H}}=|\vh|+\|\alpha\|+\|\vecth\|$. It follows that $\mathcal{H}$ is an Hilbert space.

\smallskip

For $(\mathcal X,\|\cdot\|_{\mathcal X})$, $(\mathcal Y,\|\cdot\|_{\mathcal Y})$ Banach spaces we denote by $\mathcal L(\mathcal X,\mathcal Y)$ the space of bounded linear operators from $\mathcal X$ to $\mathcal Y$, and we let $\|\cdot\|_{\mathcal L(\mathcal X,\mathcal Y)}$ be the operator norm. Moreover, we write $\langle \cdot,\cdot \rangle_{{\mathcal X',\mathcal X}}$ to denote the pairing between a Banach space $\mathcal X$ and its dual: in fact, we will omit the indexing whenever the space $\mathcal X$ is clear from the context.

\smallskip

If not otherwise specified, we will denote by $C$ a generic constant whose value may possibly change within the same chain of inequalities, and by $C(\cdot)$ constants whose value only depend on the parameters and variables listed within parentheses.
\smallskip

Finally, we observe that the function $\vec m:\mathbb R\to\mathbb R^2$ defined in \eqref{eq:17} is bounded, infinitely differentiable and its $N$-th derivative, defined consistently with \eqref{consist}, is
\begin{equation}\label{eq:18}
  D^N\vec m(v)=\left({\begin{smallmatrix}
    0 &-1\\[0.5em]
    +1 & 0
  \end{smallmatrix}}\right)^N \vec m(v),\qquad\text{for all }N\in\mathbb N,
\end{equation}
namely, $D^N\vec m(v)$ is the vector obtained by rotating $\vec m(v)$ in the counter-clockwise direction by the amount $N\pi/2$. Thus,
\begin{equation}\label{eq:16}
  \bigl|D^N\vec m(v)\bigr|=1,\quad\text{for all }v\in\R \text{ and }N\in\mathbb N.
\end{equation}
Hence,
\begin{equation*}
  |D^N\vec m(v_1)-D^N\vec m(v_2)|\le \int_{v_1}^{v_2} \lvert D^{N+1}\vec m(v)\rvert dv=|v_1-v_2|\quad\text{for all }v_1,v_2\in\mathbb R.
\end{equation*}
As a consequence of this observation, we record three bounds which will be used several times.
\begin{lemma}
\label{trig1}
Let $(\vec{h},\alpha,\vartheta,\lambda),(\vec{\tilde{h}},\tilde{\alpha},\tilde{\vartheta},\tilde{\lambda})\in\R^2\times \R\times \R\times \R$ and $N\in\mathbb N$. Then
\begin{subequations}
\begin{eqnarray}
  \label{trigR4}
\left|\lambda D^N\vec m(\alpha+\vth)- \tilde\lambda D^N\vec m(\tilde\alpha+\tilde\vartheta) \right| & \le &  |\lambda-\tilde\lambda| + |\tilde{\lambda}| \left(|\alpha-\tilde{\alpha}|+|\vartheta-\tilde{\vartheta}|\right),
\\
  \label{trigR2}
\left|\vec h\cdot D^N\vec m(\alpha+\vth)- \tilde{\vec h}\cdot D^N\vec m(\tilde\alpha+\tilde\vartheta) \right| & \le &  |\vec h-\vec{\tilde h}| + |\vec{\tilde h}| \left(|\alpha-\tilde{\alpha}|+|\vartheta-\tilde{\vartheta}|\right),
\\
\nonumber
\left|\lambda \vec h\cdot D^N\vec m(\alpha+\vth)- \tilde\lambda \vec{\tilde h}\cdot D^N\vec m(\tilde\alpha+\tilde\vartheta) \right| & \le & |\vec{h}| |\lambda-\tilde\lambda| + |\tilde \lambda| |\vec{h}-\vec{\tilde h}|
\\
&& + |\vec{\tilde{h}}||\tilde{\lambda}| \left(|\alpha-\tilde{\alpha}|+|\vartheta-\tilde{\vartheta}|\right).\label{trigR}
\end{eqnarray}
\end{subequations}
\end{lemma}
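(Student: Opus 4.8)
The final statement to prove is Lemma~\ref{trig1}, which asserts three Lipschitz-type bounds for expressions of the form $\lambda D^N\vec m(\alpha+\vartheta)$, $\vec h\cdot D^N\vec m(\alpha+\vartheta)$, and $\lambda\vec h\cdot D^N\vec m(\alpha+\vartheta)$.

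\medskip

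The plan is to derive all three estimates from the single elementary observation recorded just above the lemma, namely that $|D^N\vec m(v)|=1$ for all $v$ and that $|D^N\vec m(v_1)-D^N\vec m(v_2)|\le |v_1-v_2|$, i.e.\ $D^N\vec m$ is $1$-Lipschitz. Each bound is then a ``product-rule'' telescoping argument: one adds and subtracts suitable intermediate terms so that each difference involves the change in only one factor, then applies the triangle inequality together with the two facts above. Concretely, for \eqref{trigR4} I would write
\[
\lambda D^N\vec m(\alpha+\vartheta)-\tilde\lambda D^N\vec m(\tilde\alpha+\tilde\vartheta)
=(\lambda-\tilde\lambda)D^N\vec m(\alpha+\vartheta)
+\tilde\lambda\bigl(D^N\vec m(\alpha+\vartheta)-D^N\vec m(\tilde\alpha+\tilde\vartheta)\bigr),
\]
take norms, use $|D^N\vec m(\alpha+\vartheta)|=1$ on the first summand and the Lipschitz bound $|D^N\vec m(\alpha+\vartheta)-D^N\vec m(\tilde\alpha+\tilde\vartheta)|\le|(\alpha+\vartheta)-(\tilde\alpha+\tilde\vartheta)|\le|\alpha-\tilde\alpha|+|\vartheta-\tilde\vartheta|$ on the second, which yields exactly the right-hand side of \eqref{trigR4}.

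\medskip

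The estimate \eqref{trigR2} follows by the identical decomposition with $\vec h$ in place of $\lambda$ and the scalar product in place of scalar multiplication, using $|\vec h\cdot D^N\vec m(v)|\le|\vec h|\,|D^N\vec m(v)|=|\vec h|$ and the Cauchy--Schwarz inequality. For the third bound \eqref{trigR}, I would telescope in three steps, e.g.
\begin{align*}
\lambda\vec h\cdot D^N\vec m(\alpha+\vartheta)-\tilde\lambda\tilde{\vec h}\cdot D^N\vec m(\tilde\alpha+\tilde\vartheta)
&=(\lambda-\tilde\lambda)\vec h\cdot D^N\vec m(\alpha+\vartheta)\\
&\quad+\tilde\lambda(\vec h-\tilde{\vec h})\cdot D^N\vec m(\alpha+\vartheta)\\
&\quad+\tilde\lambda\tilde{\vec h}\cdot\bigl(D^N\vec m(\alpha+\vartheta)-D^N\vec m(\tilde\alpha+\tilde\vartheta)\bigr),
\end{align*}
and then bound the three terms by $|\vec h|\,|\lambda-\tilde\lambda|$, $|\tilde\lambda|\,|\vec h-\tilde{\vec h}|$, and $|\tilde\lambda|\,|\tilde{\vec h}|\,(|\alpha-\tilde\alpha|+|\vartheta-\tilde\vartheta|)$ respectively, which is precisely the claimed right-hand side.

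\medskip

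There is essentially no obstacle here: the lemma is a routine consequence of the boundedness and $1$-Lipschitz continuity of $D^N\vec m$, and the only mild care needed is in choosing the intermediate telescoping terms so that the estimate matches the stated form verbatim (in particular keeping the tilded variables as the ``outer'' factors in the Lipschitz term of the third inequality). The content has in fact already been prepared by the computation displayed immediately before the statement, so the proof amounts to writing out the three telescopings.
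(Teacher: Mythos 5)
Your proof is correct and follows exactly the route the paper intends: the lemma is stated as an immediate consequence of the preceding observation that $|D^N\vec m(v)|=1$ and that $D^N\vec m$ is $1$-Lipschitz, and your telescoping decompositions (together with the triangle and Cauchy--Schwarz inequalities) are the standard way to write this out. No gaps.
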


\section{Existence of a minimizer}\label{sec:minimizer}

In this section we address the existence of a minimizer to the optimal control-design problem \eqref{main-problem}. 

\begin{proof}[Proof of \ch{Theorem} \ref{min}]
We recall that the admissible set is defined in \eqref{eq:11}. We begin by noting that $\ch{(\bm{\vec{0}},0,\bm{0})}\in\mathcal{A}$, hence $\mathcal{A}$ is not empty.
Next, we let
$$
m=\inf_\mathcal{A} C(\vh,\alpha,\vecth),
$$
and we consider a minimizing sequence, i.e. a sequence $\{(\vh_k,\alpha_k,\vecth_k)\}\subset\mathcal{A}$ with $\vh_k=(\vec{h}_{k1},\dots,\vec{h}_{kn})$, $\vec{h}_{ki}=(h_{kix},h_{kiy})$, and $\vecth_k=(\vth_{k1},\dots,\vth_{kn})$, such that $\mathcal{C}(\vh_k,\alpha_k,\vecth_k)\to m$ as $k\to +\infty$.  In particular, by the definition of $\mathcal{C}$, a constant $C$ exists such that
\begin{equation}\label{app-1}
 \sum_{i=1}^n \io |\vth_{ki}|^2 + \io |\alpha_k'|^2 + \sum_{i=1}^n|\vec{h}_{ki}|^2 \leq C
\end{equation}
for all $k\in \N$. Moreover $\vth_{ki}$ satisfies
\begin{equation}
\label{app2}
\io \left(\vth_{ki}' v'- \vec h_{ki}\cdot{D\vec m}(\alpha_k+\vth_{ki})v\right) =0, \qquad \forall v\in\hl
\end{equation}
for all $k\in \N$ and $i=1,\dots,n$. Choosing $\vth_{ki}$ as test function in \eqref{app2} and recalling \eqref{app-1}, we obtain
\begin{equation*}
\|\vth_{ki}\|\leq 2C \quad\mbox{for all $k\in \N$ and every $i=1,\dots,n$.}
\end{equation*}
Hence, by a standard  compactness argument, $(\vh,\alpha,\vecth)\in \mathcal{H}$ exists such that, by passing to a subsequence (not relabeled),
\begin{equation}
\label{conv}
\begin{array}{l}
\vec{h}_{ki} \rightarrow \vec{h}_i \text{ in } \R^2, \text{ for every } i=1,\dots,n , \\
\alpha_k \rightarrow \alpha \text{ weakly in } \hl \text{ and uniformly in } C([0,1]), \\
\vth_{ki} \rightarrow \vth_i \text{ weakly in } \hl \text{ and uniformly in } C([0,1]), \text{ for every } i=1,\dots,n.
\end{array}
\end{equation}
Letting $k$ tend to infinity in \eqref{app2} and using the convergence statement \eqref{conv}, we conclude that $\vth_i$ is a weak solution of \eqref{pthi} for every $i=1,\dots,n$, so $(\vh,\alpha,\vecth)\in\mathcal{A}$. Moreover, by lower semi-continuity,
$$
\mathcal{C}(\vh,\alpha,\vecth)\leq \liminf_{k\to\infty}\mathcal{C}(\vh_k,\alpha_k,\vecth_k)=m.
$$
This implies that $(\vh,\alpha,\vecth)$ is a minimizer of $\mathcal{C}$. In addition, since $(\bm{0},0,\bm{\vec{0}})\in\mathcal{A}$, for any minimizer we have
\begin{equation*}
\frac{\gamma}{2}\sum_{i=1}^n |\vec{h}_i|^2 \le \mathcal{C}(\vh,\alpha,\vecth)\leq \mathcal{C}\ch{(\bm{\vec{0}},0,\bm{0})} = \frac12\sum_{i=1}^n\ch{\int_0^1}|\overline{\vartheta}_i|^2,
\end{equation*}
which implies \eqref{eq:13}. \ch{This proves part $(i)$ of the theorem.

\medskip

We now restrict attention to attainable targets: that is, we assume that $\tbar$ is such that $(\vhb,\bar\alpha,\tbar)\in \mathcal A$ for some  $\vhb\in\R^{2n}$  and some $\bar \alpha\in H^1_{0L}(I)$. Note that in this case $\mathcal C_{0,0}(\vhb,\bar\alpha,\tbar)=0$, so $(\vhb,\bar\alpha,\tbar)$ is a minimizer of $\mathcal{C}_{0,0}$. Let $(\vh_\eps,\alpha_\eps,\vecth_\eps)$ be a minimizer of $\mathcal C_{\eps,\eps}$ in $\mathcal A$. We have $\mathcal C_{\eps,\eps} (\vh_\eps,\alpha_\eps,\vecth_\eps)\le \mathcal C_{\eps,\eps}(\vhb,\bar\alpha,\tbar)$, that is,
$$
\sum_{i=1}^n \io |\vth_{\eps i}-\vthb_i|^2 + \varepsilon\io |\alpha_\eps'|^2 + \eps \sum_{i=1}^n |\vec{h}_{\eps i}|^2 \le \varepsilon \io |\bar\alpha'|^2 +\eps \sum_{i=1}^n |\vec{\bar h}_i|^2.
$$
This means that
$$
\sum_{i=1}^n \io |\vth_{\eps i}-\vthb_i|^2 \stackrel{\eps\to 0}\to 0 \quad\mbox{and}\quad \io |\alpha_\eps'|^2 + \sum_{i=1}^n |\vec{h}_{\eps i}|^2 \le
\io |\bar\alpha'|^2 + \sum_{i=1}^n |\vec{\bar h}_i|^2.
$$
Therefore, arguing as above, we see that for a subsequence $\vh_\eps\to \vh_0$ in $\R^{2n}$, $\alpha_\eps\rightharpoonup \alpha_0$ in $H^1_{0L}(I)$, $\vecth_\eps\rightharpoonup  \vecth_0=\tbar$ in $H^1_{0L}(I)^n$, and $(\vh_0,\alpha_0,\tbar)\in \mathcal A$. In addition, it is obvious that $\mathcal C_{0,0}(\vh_0,\alpha_0,\tbar)=0$. Therefore $(\vh_0,\alpha_0,\tbar)$ is a minimizer of $\mathcal C_{0,0}$ in $\mathcal A$.

\medskip

In order to prove $(iii)$, fix $\bar\vartheta \in H^3(I)\subset C^2([0,1])$ with $\bar\vartheta(0)=0$ and $\bar\vartheta'(1)=0$. Let $H > \max_{s\in[0,1]}|\overline\vartheta''(s)|$ and set
\begin{equation*}
\vec{\bar h}=(H\cos\psi,H\sin\psi),\quad \bar\alpha(s)=\arcsin\Big(\frac{\overline\vartheta''(s)}H\Big)-\overline\vartheta(s)+\psi_0,
\end{equation*}
Hence, choosing $\psi=-\arcsin\Big(\frac{\overline\vartheta''(0)}H\Big)$, we have $\alpha(0)=0$. Moreover, since $\bar\vartheta'' \in H^1(I)$ and $\arcsin$ is a Lipschitz continuous function in any compact subset of $(-1,1)$, we deduce that $\bar\alpha\in H^1_{0L}(I)$. This implies that $(\vec{\bar h}, \bar\alpha,\bar\vartheta)\in \mathcal H$. \\
Finally, after straightforward computations using angle sum identities, one sees that
\begin{equation*}
\begin{cases}
-\bar \vth''-\vec{\bar h}\cdot {D\vec m}(\bar\alpha+\bar\vartheta)=0   & \mbox{in $I$}, \\
\bar\vth(0)=0, \\
\bar\vth'(1)=0.
\end{cases}
\end{equation*}
This shows that $\bar\vartheta$ is always attainable.
}
\end{proof}

\section{The basic equation}\label{sec:basic}

The next sections will be devoted to the analysis of the Lagrange-multiplier system \eqref{EQ}. Its equations, \1eq, \2eq, and $({P}_{\alpha})$, share the following structure:

\begin{equation}
\label{general}
\begin{cases}
-v''+f(s,v)=0 & \mbox{in $(0,1),$} \\
v(0)=0, \\
v'(1)=0.
\end{cases}
\end{equation}
In particular, with reference to \eqref{EQ}, we have that
\begin{equation}\label{eq:12}
 \text{\eqref{general} is equivalent to: }\left\{ \begin{aligned}
    &({P}_{\vth_i})\text{ for }f(s,v)=-\vec h_i\cdot D\vec m(\alpha(s)+v);\\
    &({P}_{\lambda_i})\text{ for }f(s,v)=-v\vec h_i\cdot D^2\vec m(\alpha(s)+\vth_i(s))+\overline\vth_i(s)-\vth_i(s);\\
    &({P}_{\alpha})\text{ for }f(s,v)=\frac 1 \varepsilon \sum_{i=1}^n\lambda_i(s)\vec h_i\cdot D^2\vec m(v+\vth_i(s)).
  \end{aligned}\right.
\end{equation}
\begin{defin}
\label{weaksol}
Let $f\in L^1(I\times \R)$. A function $v$ belonging to $\hl$ is a (weak) solution to problem \eqref{general} if
\begin{equation}\label{weak-form}
\io v'w'+\io f(s,v)w=0, \qquad \text{for all } w\in\hl.
\end{equation}
\end{defin}

In the following Lemma we provide (to the extent we need) uniqueness, existence, and boundedness results for solutions of \eqref{general}.

\begin{lemma}
\label{qualit} Let $f\in L^\infty(I\times \R)$, let $L\in (0,c_p^{-2})$ be such that
\begin{equation}\label{Lip-f}
|f(s,v_1)-f(s,v_2)| \le L |v_1-v_2| \quad\mbox{for a.e. $s\in I$ and for all $v_1,v_2\in \R$,}
\end{equation}
and let $c_p$ be defined by \eqref{def-c}. Then there exists a unique solution $v$ to problem \eqref{general} in the sense of Definition \ref{weaksol}, and this solution satisfies the bounds
\begin{equation}
\label{bound2}
\displaystyle \|v\|\le \frac{c_p}{1-Lc_p^2}\|f(s,0)\|_\infty,\quad \|v\|_{\infty}\le \|f\|_{\infty}.
\end{equation}
\end{lemma}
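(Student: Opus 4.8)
The plan is to construct the solution as the unique fixed point of a contraction on $\hl$ and then to read off the two bounds in \eqref{bound2} by an energy estimate and by a bootstrap argument, respectively.

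\emph{Fixed point.} For each $z\in\hl$ (which is in particular continuous by the Sobolev embedding, so that $s\mapsto f(s,z(s))$ is measurable and bounded by $\|f\|_\infty$), the linear problem ``find $v\in\hl$ with $\io v'w'=-\io f(s,z)w$ for all $w\in\hl$'' has a unique solution $v=:Tz$: the right-hand side is a bounded linear functional on $\hl$ (since $f(\cdot,z)\in L^\infty(I)\subset L^2(I)$), and $(v,w)\mapsto\io v'w'$ is precisely the inner product inducing the norm \eqref{eq:10}, so $Tz$ exists and is unique by the Riesz representation theorem. Given $z_1,z_2\in\hl$, subtracting the defining identities for $v_j:=Tz_j$, testing with $v_1-v_2$, and using \eqref{Lip-f} together with twice the Poincar\'e inequality \eqref{def-c} gives
\[
\|v_1-v_2\|^2=-\io\big(f(s,z_1)-f(s,z_2)\big)(v_1-v_2)\le L\,\|z_1-z_2\|_2\,\|v_1-v_2\|_2\le Lc_p^2\,\|z_1-z_2\|\,\|v_1-v_2\|,
\]
whence $\|Tz_1-Tz_2\|\le Lc_p^2\,\|z_1-z_2\|$. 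Since $Lc_p^2<1$ by hypothesis, $T$ is a contraction on the Hilbert space $\hl$, and the Banach fixed-point theorem yields a unique $v$ with $Tv=v$, which is exactly the unique weak solution of \eqref{general} in the sense of Definition \ref{weaksol} (any two weak solutions are fixed points of $T$, hence coincide).

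\emph{First bound.} Testing the weak formulation satisfied by $v$ with $v$ itself and splitting $f(s,v)=\big(f(s,v)-f(s,0)\big)+f(s,0)$, then using \eqref{Lip-f}, the trivial inequality $\|f(\cdot,0)\|_2\le\|f(\cdot,0)\|_\infty$ (recall $|I|=1$) and \eqref{def-c}, we get
\[
\|v\|^2\le L\,\|v\|_2^2+\|f(\cdot,0)\|_\infty\,\|v\|_2\le Lc_p^2\,\|v\|^2+c_p\,\|f(\cdot,0)\|_\infty\,\|v\|;
\]
dividing by $\|v\|$ and rearranging (using again $Lc_p^2<1$) yields $\|v\|\le c_p(1-Lc_p^2)^{-1}\|f(\cdot,0)\|_\infty$.

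\emph{Sup bound and main difficulty.} From the weak formulation, $v''=f(s,v)$ in $\mathcal D'(I)$ with $f(\cdot,v)\in L^\infty(I)$, hence $v'\in W^{1,\infty}(I)$ and the natural boundary condition $v'(1)=0$ holds; integrating $v''=f(s,v)$ over $(s,1)$ gives $v'(s)=-\int_s^1 f(t,v(t))\,\mathrm dt$, so $\|v'\|_\infty\le\|f\|_\infty$, and then $\|v\|_\infty\le\|v'\|_1\le\|v'\|_\infty\le\|f\|_\infty$ (or, equivalently, invoke \eqref{def-S}). I do not expect any genuine obstacle: the statement is the standard well-posedness of a subcritical semilinear two-point boundary value problem. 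The single point not to overlook is that the contraction constant equals $Lc_p^2$, so the argument uses in an essential way the standing hypothesis $L<c_p^{-2}$ — precisely the threshold beyond which the contraction (and uniqueness) may fail — together with the elementary but necessary checks that $s\mapsto f(s,z(s))$ is measurable and that the linearized problem is well posed on $\hl$ endowed with the norm \eqref{eq:10}.
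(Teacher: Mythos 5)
Your argument is correct, and for the existence part it takes a genuinely different route from the paper. You obtain existence and uniqueness in one stroke via the Banach fixed-point theorem applied to the map $T$ that freezes the nonlinearity ($Tz$ solves the linear problem with datum $f(\cdot,z)$, well defined by Riesz representation in $\hl$ with the norm \eqref{eq:10}); the contraction constant $Lc_p^2<1$ comes from testing the difference of the two linear problems and applying \eqref{def-c} twice. The paper instead proves existence by the direct method: it minimizes $\mathscr F(w)=\frac12\io (w')^2+\io g(s,w)$ with $g(s,v)=\int_0^v f(s,t)\,dt$, where coercivity is guaranteed by exactly the same condition $Lc_p^2<1$, and then proves uniqueness separately by the subtraction-and-test computation that is literally your contractivity estimate with $z_j=v_j$. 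So the two proofs hinge on the hypothesis $L<c_p^{-2}$ at the same point; yours is more elementary and constructive (Picard iteration converges), while the paper's variational route fits the energy-minimization viewpoint used elsewhere (e.g.\ Corollary \ref{cor:theta} identifies solutions of the state equation with minimizers of the magnetoelastic energy, which a pure fixed-point argument would not give). The derivations of the two bounds in \eqref{bound2} coincide with the paper's: the energy estimate for $\|v\|$ is identical, and your formula $v'(s)=-\int_s^1 f(t,v(t))\,dt$ followed by $\|v\|_\infty\le\|v'\|_1$ is the same as the paper's iterated-integral representation. The measurability of $s\mapsto f(s,z(s))$ that you flag is indeed a (Carath\'eodory-type) point the paper glosses over as well; it causes no difficulty for the concrete right-hand sides in \eqref{eq:12}.
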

\begin{proof}
Let $f_0(s)=f(s,0)$. For $ g(s,v)=\int_0^v f(s,t) dt$, we let:
$$
\mathscr F(w)=\frac12 \io w'^2+\io g(s,w), \qquad w\in\hl.
$$
This position defines a G\^ateaux-differentiable, weakly-lower semicontinuous functional $\mathscr F:\hl\to\mathbb R$. Since
$$
|g(s,v)|\le \int_0^v |f(s,t)|dt\stackrel{\eqref{Lip-f}}\le |f_0(s)||v|+\frac12 L|v|^2,
$$
we have
\[
  \mathscr F(w)\ge \frac12 \io {w'}^2-\io |f_0 w|-\frac12 L\io w^2\ge \frac12(1-Lc_p^2)\|w\|^2-\|f_0\|_2\|w\|_2,
\]
whence, by \eqref{def-c},
\[
  \mathscr F(w)\ge \frac12(1-Lc_p^2)\|w\|^2-c_p\|f_0\|_\infty\|w\|.
 \]
 This inequality implies that $\mathscr F$ is \emph{coercive}, thanks to the hypothesis $L<c_p^{-2}$. The coercivity and the lower semicontinuity of $\mathscr F$ imply, by a standard argument, that $\mathscr F$ has a minimizer $v$ in $\hl$ (see \cite{Giusti2003}). Since $\mathscr F$ is G\^ateaux differentiable, $v$ is also a weak solution of Problem \eqref{general}.

\smallskip

In order to prove uniqueness, let $v_1$ and $v_2$ be two weak solutions of \eqref{general}. According to Definition \ref{weaksol}, $v_1-v_2$ is a legal test function for the weak formulation of \eqref{general}. We use this test in \eqref{weak-form}. On taking the difference between the resulting equations we obtain:
$$
\io [(v_1-v_2)']^2 + \io (f(s,v_1)-f(s,v_2))(v_1-v_2)=0.
$$
It follows from the assumption on $f$ and from the Poincar\'e inequality \eqref{def-c} that
\[
  \io (f(s,v_1)-f(s,v_2))(v_1-v_2) \le L\io |v_1-v_2|^2\le L c_p^2\|v_1-v_2\|^2,
\]
whence
$$
(1-Lc_p^2)\|v_1-v_2\|^2\leq 0,
$$
and thence $v_1=v_2$, given that $Lc_p^2<1$.

\smallskip

Taking $v$ as test function in \eqref{weak-form} we obtain that
\begin{eqnarray*}
\|v\|^2=\io v'^2 &=& -\io f(s,v)v \le \io |f_0(s)||v|+ L\io |v|^2
\\ &\le & \|f_0\|_\infty\left(\io v^2\right)^{1/2} + L\io |v|^2
\stackrel{\eqref{def-c}}\le
 c_p\|f_0\|_\infty\|v\| + Lc_p^2 \|v\|^2,
\end{eqnarray*}
whence the first bound in \eqref{bound2}. Finally, the second bound in \eqref{bound2} is immediate from the representation formula
$$
v(s)= \int_0^s \int_{s'}^1 f(s'',v(s''))d s''\ ds'.
$$
\end{proof}

\begin{remark}[Regularity and boundary values of the solution to Problem \eqref{general}]{\rm
\label{regularity}
\ch{Under the assumption $f\in L^\infty(I\times \R)$ of Lemma \ref{qualit}}, we note that if $v$ is a weak solution to problem \eqref{general}, then
$$
v\in H^2(I)=\left\{w\in L^2(I) : w',w''\in L^2(I)\right\}.
$$
The Sobolev embedding theorem
(see for instance Sec. 2.1 of \cite{buttazzo}) implies that $v\in C^1([0,1])$, and that the boundary conditions are satisfied pointwise. Indeed, since $v\in\hl$, we have that $v(0)=0$. Moreover, multiplying by an arbitrary function $w\in\hl$ and integrating in $I$ equation \eqref{general} we obtain
\begin{equation}\label{regu}
\io -v''w+\io f(s,v)w=0 \qquad \text{for all } w\in\hl.
\end{equation}
Integrating by parts the first term of the l.h.s. of \eqref{regu} we have
$$
v'(1)w(1)=\io (v'w)'=\io v'w'+\io f(s,v)w\stackrel{\eqref{weak-form}}= 0 \qquad \text{for all } w\in\hl.
$$
This implies that $v'(1)=0$.
}\end{remark}

As a by-product of the previous discussion, we obtain:
\begin{coro}\label{cor:theta}
Let $\vec h\in \R^2$ and \ch{$\alpha: I\to \R$ measurable}. Then the energy functional $\mathcal E$ defined by \eqref{eq:6} has a minimizer $\vartheta\in H^1_{0L}$. Furthermore, $\vartheta$ solves \eqref{pthi1} and \ch{it} is unique if \eqref{eq:8} holds. \ch{Finally, if $\alpha\in H^1(I)$, then $\vartheta\in H^3(I)$.}
\end{coro}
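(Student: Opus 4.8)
The plan is to handle existence by the direct method of the Calculus of Variations, to identify the Euler--Lagrange equation of $\mathcal E$ with the weak form of \eqref{pthi1}, to obtain uniqueness under \eqref{eq:8} from Lemma \ref{qualit}, and finally to bootstrap regularity from the equation itself.

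For existence I would first note that, since $|\vec m|\equiv1$, the second term in \eqref{eq:6} is bounded pointwise by $|\vec h|$, so that $\mathcal E(\vartheta)\ge\frac12\|\vartheta\|^2-|\vec h|$ with $\|\cdot\|$ the norm \eqref{eq:10}; hence $\mathcal E$ is coercive on $\hl$ \emph{irrespective of the size of} $|\vec h|$ --- this is exactly the point where the argument cannot be reduced to Lemma \ref{qualit}, whose coercivity requires $L<c_p^{-2}$. For weak lower semicontinuity, the quadratic term $\frac12\io(\vartheta')^2$ is convex and strongly continuous, hence weakly lsc; for the remaining term, if $\vartheta_k\rightharpoonup\vartheta$ in $\hl$ then, by the compact Sobolev embedding $\hl\subset C([0,1])$, $\vartheta_k\to\vartheta$ uniformly, so that $\io\vec h\cdot\vec m(\vartheta_k+\alpha)\to\io\vec h\cdot\vec m(\vartheta+\alpha)$ by the $1$-Lipschitz bound on $\vec m$ recorded in Section \ref{sec:preli} (here $\alpha$, which is fixed, need only be measurable). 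Coercivity together with weak lower semicontinuity then produces a minimizer $\vartheta\in\hl$ by the standard argument.

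Next I would compute the first variation: $t\mapsto\mathcal E(\vartheta+tw)$ is differentiable for every $w\in\hl$, with $\frac{d}{dt}\big|_{t=0}\mathcal E(\vartheta+tw)=\io\big(\vartheta'w'-\vec h\cdot D\vec m(\vartheta+\alpha)\,w\big)$, and requiring this to vanish for all $w\in\hl$ is precisely the weak formulation of \eqref{pthi1}, i.e. of \eqref{general} with $f(s,v)=-\vec h\cdot D\vec m(\alpha(s)+v)$. Since $|f|\le|\vec h|$ gives $f\in L^\infty(I\times\R)$, Remark \ref{regularity} then yields $\vartheta\in H^2(I)\cap C^1([0,1])$ with $\vartheta(0)=0$ and $\vartheta'(1)=0$ holding pointwise. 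For uniqueness under \eqref{eq:8}, I would observe that when $\vec h\ne 0$ and $|\vec h|<c_p^{-2}$ the same $f$ is Lipschitz in $v$ with constant $L=|\vec h|<c_p^{-2}$ (again by the $1$-Lipschitz bound on $D\vec m$) and lies in $L^\infty(I\times\R)$, so Lemma \ref{qualit} gives uniqueness of the weak solution of \eqref{pthi1}; since every minimizer of $\mathcal E$ solves \eqref{pthi1}, the minimizer is unique (the case $\vec h=0$ being trivial, as then $\mathcal E$ is strictly convex with minimizer $\vartheta=0$).

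Finally, for the $H^3$ regularity when $\alpha\in H^1(I)$, I would use \eqref{pthi1} in the form $\vartheta''=-\vec h\cdot D\vec m(\alpha+\vartheta)$ a.e. in $I$: since $\alpha+\vartheta\in H^1(I)$ (as $\vartheta\in H^2(I)\subset H^1(I)$) and $D\vec m$ is $C^1$ with bounded derivative $D^2\vec m$, $|D^2\vec m|\equiv1$, the chain rule for Sobolev functions gives $s\mapsto D\vec m(\alpha(s)+\vartheta(s))\in H^1(I)$ with derivative $(\alpha'+\vartheta')D^2\vec m(\alpha+\vartheta)\in L^2(I)$, whence $\vartheta''\in H^1(I)$, i.e. $\vartheta\in H^3(I)$. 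I expect no serious difficulty in any of these steps; the only points needing a little care are the weak continuity of the non-convex term in $\mathcal E$ (secured by the compact embedding $\hl\subset C([0,1])$) and the use of the Sobolev chain rule in the last step.
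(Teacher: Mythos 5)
Your proposal is correct and follows essentially the same route as the paper: existence by the direct method (coercivity from $|\vec m|\equiv 1$ plus weak lower semicontinuity via the compact embedding $\hl\subset C([0,1])$), the Euler--Lagrange equation identified with the weak form of \eqref{pthi1} so that Lemma \ref{qualit} gives uniqueness under \eqref{eq:8}, and the $H^3$ bootstrap from $\vec h\cdot D\vec m(\alpha+\vartheta)\in H^1(I)$. You merely spell out details the paper leaves implicit (the explicit coercivity bound valid for all $\vec h$, and the Sobolev chain rule), and these are all sound.
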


\begin{proof}
Existence of a minimizer of $\mathcal E$ in $\hl$ can be proved with the same arguments used in Section \ref{sec:minimizer}. By standard variational considerations, $\vartheta$ is a solution to \eqref{weak-form} with $f(s,\vartheta)= -\vec h\cdot D\vec m(\alpha(s)+\vartheta)$: hence, by Lemma \ref{qualit}, it is unique if $|\vec h|<c_p^{-2}$. \ch{Moreover, if $\alpha\in H^1(I)$, we deduce that $\vec h\cdot D\vec m(\alpha(s)+\vartheta)\in H^1(I)$. Hence, by \eqref{pthi1}, $\vartheta''\in H^1(I)$, i.e. $\vartheta\in H^3(I)$.
}
\end{proof}

\ch{
  \begin{remark} \label{lyasch}
    {\rm
      The condition $|\vec h|<c_p^{-2}$ \ch{in Corollary \ref{cor:theta}} is optimal in general, as the following counterexample shows. Taking $\alpha=0$ and $\vec h=(-H,0)$ in \eqref{pthi1} yields
\begin{equation}
\begin{cases}
\label{pthi12}
-\vth''-H\sin\ch{\vth}=0   & \mbox{in $(0,1),$} \\
\vth(0)=0, \ \vth'(1)=0.
\end{cases}
\end{equation}
Problem \eqref{pthi12} is the same one which governs a cantilever under a compressive thrust applied at its free end. It admits the trivial solution $\vth=0$ for every $H\in\mathbb R$. This solution is unique for $H<c_p^{-2}=\pi^2/4$. However, a non-trivial branch emanates from the singular point $(\vartheta,H)=(0,c_p^{-2})$, and uniqueness is lost for $H>c_p^{-2}$. Indeed, after integration, a strictly increasing solution $\vth(s)$ with $\vth(1)=\vth_1>0$ is implicitly given by
\begin{equation}\label{kk}
\int_0^{\vth(s)} \frac{d t}{\sqrt{\cos t -\cos \vth_1}} =\sqrt{2H} s, \quad \sqrt{2H}=\int_0^{\vth_1} \frac{d t}{\sqrt{\cos t -\cos \vth_1}} =\sqrt{2} K(\sin\tfrac{\vth_1}{2}),
\end{equation}
where $K(k):=\int_0^{\pi/2}\frac{d \phi}{\sqrt{1-k^2\sin^2\phi}}$ is the complete elliptic integral of the first kind (for the last equality in \eqref{kk}, one uses the change of variables $\sin\phi=\sin\tfrac t2/\sin\tfrac{\vth_1}{2}$). It is easily checked that $K(k)$ is increasing, $K(k)\to \frac{\pi}{2}$ as $k\to 0^+$, and $K(k)\to +\infty$ as $k\to 1^-$: hence for any $H>\pi^2/4=c_p^{-2}$ the second equation in \eqref{kk} has a solution $\theta_1(H)$, and inverting the first one we obtain $\vth(s)$. We aside mention that, when $\vec h=(0,H)$ is taken instead of $\vec h=(-H,0)$, numerical evidence in \cite{Ciambella2018} suggests uniqueness of solutions to \eqref{pthi1} for values of $H$ substantially larger than $c_p^{-2}$.
}
\end{remark}
}

\ch{
\begin{remark}
    \rm One may wonder whether, under the condition $|\vec h|<c_p^{-2}$, the discrepancy between the full nonlinear theory and a simpler, linearized theory for \eqref{pthi1} would be negligible, so as to motivate an employment of the latter to simplify computations. In this respect, we remark that a simple numerical computation of the equilibrium shape for $\vec h=(0,H)$ and $H$ close to $c_p^{-2}$ yields a discrepancy between nonlinear and linear theory of nearly $40\%$.
\end{remark}
}

\section{The Lagrange multiplier formulation}\label{sec:lagr2}

We recall the definition \eqref{defg-intro} of the constraint mapping:
\begin{equation}
\label{defg}
\langle G(\vh,\alpha,\vecth),\bm{u}\rangle =\sum_{i=1}^n\left\{\io \vth_i'u_i'-\io \vec h_i\cdot{D\vec m}(\alpha+\vth_i) u_i\right\}.
\end{equation}
Since $\displaystyle |\langle G(\vh,\alpha,\vecth),\bm{u}\rangle|\leq C(\vh,\vecth)\|\bm{u}\|$ for every $\bm{u}\in\hl^n$, $G(\vh,\alpha,\vecth)$
is a linear bounded functional. Thus \eqref{defg} defines a map $G:\mathcal{H}\to(\hl^n)'$. Thanks to the equivalence
\begin{equation*}
(\vh,\alpha,\vecth)\in\mathcal{A} \quad \Leftrightarrow \quad G(\vh,\alpha,\vecth)= 0,
\end{equation*}\color{black}
we can write
\begin{equation}
\label{defA}
\mathcal{A}\stackrel{\eqref{eq:11}}=\left\{(\vh,\alpha,\vecth)\in\mathcal{H} : G(\vh,\alpha,\vecth)= 0\right\}.
\end{equation}
Proposition \ref{lag} in the Appendix of this paper provides sufficient conditions for the existence of the Lagrange multiplier $\bm\lambda$. We are going to use this proposition as a tool to characterize the minimizers of $\mathcal{C}$ in $\mathcal{A}$. To this aim, we need to assess the regularity of the functional $\mathcal C$ and of the operator $G$; the next statement concerns their Fr\'echet differentiability, which we shall obtain as a consequence of Proposition \ref{equiv} and the following lemma.
\begin{lemma}
\label{lemc1}
\ch{Let $\eps,\gamma\ge 0$.} The operators $\mathcal{C}:\mathcal{H}\to\R$ and $G:\mathcal{H}\to(\hl^n)'$ \ch{are $C^1$,} with $D\mathcal{C}:\mathcal{H}\to\mathcal{H}'$  and $DG:\mathcal{H}\to\mathcal{L}(\mathcal{H},(\hl^n)')$ being represented by
\begin{equation}
\label{DC}
\displaystyle D\mathcal{C}(\vh,\alpha,\vecth)(\bm{\vec{k}},\beta,\bm{\iota})=\gamma\sum_{i=1}^n \vec{h}_i\cdot\vec{k}_i+\varepsilon\io \alpha'\beta'+\sum_{i=1}^n\io (\vth_i-\vthb_i)\iota_i,
\end{equation}
respectively
\begin{eqnarray}
\label{DG}
\nonumber
\lefteqn{\langle DG(\vh,\alpha,\vecth)(\bm{\vec{k}},\beta,\bm{\iota}),\bm{u}\rangle \ = \ \sum_{i=1}^n\io -\vec{k}_{i}\cdot D\vec{m}(\alpha+\vth_i)u_i}
\\ &&+\!\sum_{i=1}^n\!\io\!\left\{ \iota_i'u_i'-\vec h_i\cdot\left({D^2\vec m}(\alpha+\vth_i)\iota_i+D^2\vec m(\alpha+\vth_i)\beta\right)\!u_i\!\right\}\!\!,
\end{eqnarray}
for every $(\vh,\alpha,\vecth),(\bm{\vec{k}},\beta,\bm{\iota})\in\mathcal{H}$ and $\bm{u}\in\hl^n$.
\end{lemma}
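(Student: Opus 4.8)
The plan is to prove Lemma \ref{lemc1} by verifying, for each of the two maps, the standard criterion that guarantees $C^1$-regularity: namely, that the map admits at each point a bounded linear G\^ateaux differential given by the stated formula, and that this differential depends continuously (in operator norm) on the base point. This is exactly the content I expect to be packaged in the cited ``Proposition \ref{equiv}''; hence the real work is to (a) compute the G\^ateaux derivatives and (b) establish continuity of the derivative maps, using the elementary trigonometric bounds of Lemma \ref{trig1}.

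For the cost functional $\mathcal C$, the computation is immediate: $\mathcal C$ is a sum of a quadratic form in $\vh$, a quadratic form in $\alpha'$, and quadratic terms in $\vth_i-\vthb_i$, so it is literally a quadratic polynomial (plus an affine term) on the Hilbert space $\mathcal H$. Its G\^ateaux derivative at $(\vh,\alpha,\vecth)$ in the direction $(\bm{\vec k},\beta,\bm\iota)$ is obtained by differentiating $t\mapsto \mathcal C((\vh,\alpha,\vecth)+t(\bm{\vec k},\beta,\bm\iota))$ at $t=0$, which yields exactly \eqref{DC}; boundedness of this linear functional is clear, and the map $(\vh,\alpha,\vecth)\mapsto D\mathcal C(\vh,\alpha,\vecth)$ is affine and continuous (indeed Lipschitz) from $\mathcal H$ to $\mathcal H'$. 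So $\mathcal C\in C^1$ with the claimed differential; this part is essentially a remark.

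For the constraint map $G$, I would first fix $(\vh,\alpha,\vecth)$ and $(\bm{\vec k},\beta,\bm\iota)$, write out $\langle G((\vh,\alpha,\vecth)+t(\bm{\vec k},\beta,\bm\iota)),\bm u\rangle$, and differentiate in $t$ at $t=0$. The term $\io(\vth_i'+t\iota_i')u_i'$ contributes $\io\iota_i'u_i'$; the term $-\io(\vec h_i+t\vec k_i)\cdot D\vec m(\alpha+\vth_i+t(\beta+\iota_i))\,u_i$ contributes, by the chain rule and $\frac{d}{dv}D\vec m(v)=D^2\vec m(v)$, the expression $-\io\vec k_i\cdot D\vec m(\alpha+\vth_i)u_i-\io\vec h_i\cdot D^2\vec m(\alpha+\vth_i)(\beta+\iota_i)u_i$, matching \eqref{DG}. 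That this linear map in $(\bm{\vec k},\beta,\bm\iota)$ is bounded from $\mathcal H$ into $(\hl^n)'$ follows from $|D\vec m|=|D^2\vec m|=1$ (see \eqref{eq:16}), H\"older's inequality, the embedding $\hl\hookrightarrow C([0,1])$, and the Poincar\'e inequality \eqref{def-c}, with operator norm bounded by $C(1+|\vh|+\|\vecth\|)$ — so the G\^ateaux differential exists and lies in $\mathcal L(\mathcal H,(\hl^n)')$.

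The main obstacle — and the only step requiring genuine estimation — is the \emph{continuity} of $DG:\mathcal H\to\mathcal L(\mathcal H,(\hl^n)')$, which upgrades G\^ateaux- to Fr\'echet-differentiability and gives $C^1$. Here I would take two base points $(\vh,\alpha,\vecth)$ and $(\vht,\tilde\alpha,\tilde\vecth)$, subtract the two expressions for $DG(\cdot)(\bm{\vec k},\beta,\bm\iota)$ tested against $\bm u$, and group the differences into terms of the form $[\vec k_i\cdot D\vec m(\alpha+\vth_i)-\vec k_i\cdot D\vec m(\tilde\alpha+\tilde\vth_i)]$, $[\vec h_i\cdot D^2\vec m(\alpha+\vth_i)-\vec{\tilde h}_i\cdot D^2\vec m(\tilde\alpha+\tilde\vth_i)]\iota_i$, and likewise with $\beta$. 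Each such difference is controlled by \eqref{trigR2} (and its analogues) from Lemma \ref{trig1}, giving bounds like $|\vec h_i-\vec{\tilde h}_i|+|\vec{\tilde h}_i|(\|\alpha-\tilde\alpha\|_\infty+\|\vth_i-\tilde\vth_i\|_\infty)$, which via \eqref{def-S} and H\"older's inequality produces a factor $\|(\vh,\alpha,\vecth)-(\vht,\tilde\alpha,\tilde\vecth)\|_{\mathcal H}$ times the norms of $(\bm{\vec k},\beta,\bm\iota)$ and $\bm u$ (with a constant depending locally on $|\vh|,\|\vecth\|$). Taking suprema over $\|(\bm{\vec k},\beta,\bm\iota)\|_{\mathcal H}\le1$ and $\|\bm u\|\le1$ gives $\|DG(\vh,\alpha,\vecth)-DG(\vht,\tilde\alpha,\tilde\vecth)\|_{\mathcal L(\mathcal H,(\hl^n)')}\to0$ as the base points converge, i.e. $DG$ is (locally Lipschitz, hence) continuous. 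Invoking Proposition \ref{equiv} then concludes that $G\in C^1$ with differential \eqref{DG}, completing the proof.
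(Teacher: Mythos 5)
Your proposal is correct and follows essentially the same route as the paper: compute the G\^ateaux derivatives explicitly, verify they coincide with \eqref{DC} and \eqref{DG}, and then establish (local) Lipschitz continuity of the derivative maps in operator norm via the bounds of Lemma \ref{trig1} together with H\"older, Poincar\'e and the embedding $\hl\hookrightarrow C([0,1])$, concluding by Proposition \ref{equiv}. The only cosmetic difference is that the paper also records the continuity of $G$ itself before differentiating, which your argument implicitly subsumes.
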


\begin{proof}
Fix $\bm{\varphi}:=(\vh,\alpha,\vecth)\in \mathcal{H}$. We consider a sequence $\{\bm{\varphi}_k\}:=\{(\vh_k,\alpha_k,\vecth_k)\}$, with $\vh_k=(\vec{h}_{k1},\dots,\vec{h}_{kn})$, $\vec{h}_{ki}=(h_{kix},h_{kiy})$, and $\vecth_k=(\vth_{k1},\dots,\vth_{kn})$ such that $\|\bm{\varphi}_k- \bm{\varphi}\|_{\mathcal H}\rightarrow 0$ as $k\rightarrow +\infty$. In particular, $C$ exists such that
\begin{equation}
\label{normeq2}
|\vh_k|\le C.
\end{equation}
First we focus on $\mathcal{C}$. We trivially have $\mathcal{C}(\vh_k,\alpha_k,\vecth_k)\to \mathcal{C}(\vh,\alpha,\vecth)$ in $\R$, hence $\mathcal{C}$ is continuous. The G\^ateaux derivative $\mathcal C'(\bm{\varphi})$ can be computed explicitly via Definition \ref{gat}, and it coincides with the right-hand side of \eqref{DC}. In order to show that $\mathcal C$ is $C^1$, we write (using \ch{the} Cauchy-Schwarz inequality)
\begin{align*}
&|\left(\mathcal{C}'(\bm{\varphi})-\mathcal{C}'(\bm{\varphi}_k)\right)(\bm{\vec{k}},\beta,\bm{\iota})|
\\ &
\stackrel{\eqref{DC}}\leq \left\{\gamma\sum_{i=1}^n|\vec{h}_i-\vec{h}_{ki}|+\varepsilon\|\alpha-\alpha_k\|+\sum_{i=1}^n \|\vth_i-\vth_{ki}\|_{2}\right\}\|(\bm{\vec{k}},\beta,\bm{\iota})\|_{\mathcal{H}},
\end{align*}
hence
$$
\displaystyle \|\mathcal{C}'(\bm{\varphi})-\mathcal{C}'(\bm{\varphi}_k)\|_{\mathcal H'}\leq C\|\bm{\varphi}-\bm{\varphi}_k\|_{\mathcal{H}}.
$$
Thus the G\^ateaux derivative $\mathcal C'$ of $\mathcal C$ is (Lipschitz) continuous with respect to the operator norm and, by applying Proposition \ref{equiv}, we conclude that $\mathcal C$ is Fr\'echet differentiable, that its differential is $D\mathcal C = \mathcal C'$ and that therefore $\mathcal{C}$ is $C^1$.

\medskip

Now we focus our attention on $G$, by first proving that $G$ is continuous. To this aim, we fix $\bm{u}=(u_1,\dots,u_n)\in\hl^n$ and we compute:
\begin{eqnarray}
\nonumber
  \langle G(\bm{\varphi})-G(\bm{\varphi}_k),\bm{u}\rangle&\stackrel{\eqref{defg}}=& \sum_{i=1}^n\io(\vth_i-\vth_{ki})'u_i'  -\sum_{i=1}^n\io \left(\vec h_i\cdot{D\vec m}(\alpha+\vth_i)- \vec{h}_{ki}\cdot D\vec m(\alpha_k+\vth_{ki})\right)u_i\nonumber
\\ \nonumber
&\stackrel{\eqref{trigR2}}\le& \|\vecth-\vecth_k\|\|\bm{u}\| +\sum_{i=1}^n \io\bigl(|\vec{h}_i-\vec{h}_{ki}|+|\vec{h}_{ki}|\left(|\alpha-\alpha_k|+|\vth_i-\vth_{ki}|\bigr)\right)|u_i|.
\end{eqnarray}
Then, by making use of H\"older and Poincar\'e inequalities, we deduce the inequality
$$
|\langle G(\bm{\varphi})-G(\bm{\varphi}_k),\bm{u}\rangle|\leq C \left((1+|\vh_k|)\|\vecth-\vecth_k\|+|\vh-\vh_k|+|\vh_k|\|\alpha-\alpha_k\|\right)\|\bm{u}\|,
$$
whence
\begin{eqnarray*}
\|G(\bm{\varphi})-G(\bm{\varphi}_k)\|_{(\hl^n)'} &\leq & C\left((1+|\vh_k|)\|\vecth-\vecth_k\|+|\vh-\vh_k|+|\vh_k|\|\alpha-\alpha_k\|\right)
\\ &\stackrel{\eqref{normeq2}}\leq  & C\|\bm{\varphi}-\bm{\varphi}_k\|_{\mathcal{H}}\to 0,
\end{eqnarray*}
hence $G$ is (Lipschitz) continuous. The G\^ateaux derivative $G'(\bm{\varphi}):\mathcal H\to (\hl^n)'$ can be computed explicitly via its definition, and it coincides with the right-hand side of \eqref{DG}.
Hence we deduce that
\begin{eqnarray*}
\lefteqn{\langle\left(G'(\bm{\varphi})-G'(\bm{\varphi}_k)\right)(\bm{\vec{k}},\beta,\bm{\iota}),\bm{u}\rangle} \\ &=&-\sum_{i=1}^n\io \vec{k}_i\cdot(D\vec m(\alpha+\vth_i)-D\vec m(\alpha_k+\vartheta_{ki}))u_i
\\
&&-\sum_{i=1}^n\io\left( \vec h_i\cdot
D^2\vec m(\alpha+\vartheta_i)-\vec h_{ki}\cdot D^2\vec m(\alpha_k+\vartheta_{ki})
\right)(\beta+\iota_i)u_i
\\
&\stackrel{\eqref{trigR2}}\leq &\sum_{i=1}^n \io |\vec{k}_i|(|\alpha-\alpha_k|+|\vth_i-\vth_{ki}|)|u_i| \\
&&+\sum_{i=1}^n \io \left(|\vec{h}_i-\vec{h}_{ki}|+|\vec{h}_{ki}|(|\alpha-\alpha_k|+|\vth_i-\vth_{ki}|)\right)|\beta+\iota_i||u_i| \\
&\stackrel{\eqref{normeq2}}\leq &C\left(\left(|\vh-\vh_k|+\|\alpha-\alpha_k\|+\|\vecth-\vecth_k\|\right)(|\bm{\vec{k}}|+\|\beta\|+ \|\bm{\iota}\|)\right)\|\bm{u}\|,
\end{eqnarray*}
where in the last inequality we have also used H\"older and Poincar\'e inequalities. It follows that
\begin{equation*}
\begin{aligned}
  \|(G'(\bm{\varphi})-G'(\bm{\varphi}_k))(\bm{\vec{k}},\beta,\bm\iota)\|_{(\hl^n)'} &=\sup_{\|\bm u\|_{\hl^n}=1}|\langle(G'(\bm{\varphi})-G'(\bm{\varphi}_k))(\bm{\vec{k}},\beta,\bm\iota),\bm u\rangle|\\
  &\leq C\|\bm{\varphi}-\bm{\varphi}_k\|_{\mathcal{H}}\|(\bm{\vec{k}},\beta,\bm\iota)\|_{\mathcal{H}},
  \end{aligned}
\end{equation*}
hence that
$$
\|G'(\bm{\varphi})-G'(\bm{\varphi}_k)\|_{\mathcal L(\mathcal H,(\hl^n)')}=\sup_{\|\bm{(\vec{k}},\beta,\bm\iota)\|_{\mathcal H}=1}\|(G'(\bm{\varphi})-G'(\bm{\varphi}_k))(\bm{\vec{k}},\beta,\bm\iota)\|_{(\hl^n)'} \leq C\|\bm{\varphi}-\bm{\varphi}_k\|_{\mathcal{H}}.
$$
This implies, applying Proposition \ref{equiv}, that $DG=G'$ and that $G$ is $C^1$.
\end{proof}

\ch{We may now prove Theorem \ref{lagmul2}.}

\begin{proof}[Proof of \ch{Theorem} \ref{lagmul2}]
\ch{Let $(\vh,\alpha,\vecth)$ be a minimizer of $\mathcal{C}$ in $\mathcal{A}$. We need} to prove that $DG(\vh,\alpha,\vecth)$ is surjective, that is, for every $T\in(\hl^n)'$ there exists $\bm{\psi}\in\mathcal{H}$ such that
\begin{equation}
\label{sur1}
\langle DG(\vh,\alpha,\vecth)(\bm{\psi}),\bm{u}\rangle =\langle T,\bm{u}\rangle \quad\mbox{for all $\bm{u}=(u_1,\dots,u_n)\in\hl^\ch{n}$.}
\end{equation}
\ch{It suffices to show that \eqref{sur1} has a solution $\bm{\psi}$ of the form ${\bm{\psi}}=(\bm{\vec{0}}, 0,\bm{v})$ for some} $\bm{v}\in\hl^n$. In this case, the l.h.s. of \eqref{sur1} defines a bilinear form, $a:\hl^n\times\hl^n\to\R$:
\begin{equation}
\label{sur2}
a(\bm{v},\bm{u}):= \langle DG(\vh,\alpha,\vecth)\ch{(\bm{\vec{0}}, 0,\bm{v})},\bm{u}\rangle \stackrel{\eqref{DG}}= \sum_{i=1}^n\io v_i'u_i'-\vec h_i\cdot D^2\vec m(\alpha+\vartheta_i)v_iu_i.
\end{equation}
\ch{
Applying Theorem \ref{teoSL} to each component of $DG$ (with $\delta=1$, $\tilde L=DG_i$, and $\tilde r=\vec h_i\cdot D^2\vec m(\alpha+\vartheta_i)$) we obtain $(i)$ and $(ii)$ of Theorem \ref{lagmul2}.
}

\smallskip

\ch{In order to prove $(iii)$, assume that $DG(\vh,\alpha,\vecth)$ is surjective.} In view of Lemma \ref{lemc1} and Proposition \ref{lag},
there exists a Lagrange multiplier $\bm{\lambda}\in \hl^n$ such that $(\vh,\alpha,\vecth,\bm{\lambda})$ satisfies
\begin{equation}
\label{eqz}
D\mathcal{C}(\vh,\alpha,\vecth)(\cdot)= \langle DG(\vh,\alpha,\vecth)(\cdot),\bm{\lambda}\rangle   \qquad \mbox{in $\mathcal H'$.}
\end{equation}
It follows from \eqref{DC} and \eqref{DG} that \eqref{eqz} evaluated in $(\bm{\vec{k}},\beta,\bm{\iota})\in\mathcal H$ is equivalent to
\begin{eqnarray}
\label{eqz2}
\nonumber
\lefteqn{\gamma\sum_{i=1}^n \vec{h}_i\cdot\vec{k}_i+\varepsilon\io \alpha'\beta'+\sum_{i=1}^n\io (\vth_i-\vthb_i)\iota_i = \sum_{i=1}^n\io -\vec{k}_{i}\cdot \lambda_i D\vec m(\alpha+\vth_i)}  \\
&&+\sum_{i=1}^n\io -\lambda_i \vec h_i\cdot D^2\vec m(\alpha+\vth_i)\beta +\sum_{i=1}^n\io \left\{\lambda_i'\iota_i'-\lambda_i \vec h_i\cdot{D^2\vec m}(\alpha+\vth_i) \iota_i\right\}\,.
\end{eqnarray}
Since $D\mathcal{C}$ and $DG$ are linear w.r.t. $(\bm{\vec{k}},\beta,\bm{\iota})$,
\ch{\eqref{eqz2} is equivalent to}
\begin{equation}
\label{eqz3}
\begin{cases}
\displaystyle \io \left\{\lambda_i'\iota_i'-\lambda_i\vec h_i\cdot{D^2\vec m}(\alpha+\vth_i) \iota_i\right\}=\io (\vth_i-\vthb_i) \iota_i, \\
\displaystyle \varepsilon\io \alpha'\beta'+\sum_{i=1}^n\io \lambda_i\vec h_i\cdot D^2\vec m(\alpha+\vth_i)\beta=0, \\
\displaystyle \gamma \vec k_{i}\cdot\vec h_i=-\vec k_i\cdot\io \lambda_i{D\vec m}(\alpha+\vth_i), \\
\end{cases} \forall i=1,\dots,n.
\end{equation}
Recalling Remark \ref{regularity} and adding to \eqref{eqz3} the constraint $(\vh,\alpha,\vecth)\in\mathcal{A}$, we conclude that $(\vh,\alpha,\vecth,\bm{\lambda})$ is a solution to \eqref{EQ}.

\smallskip

In order to deduce $\alpha'(0)=0$ \ch{if $\eps>0$}, we recall that $\vth_i,\alpha,\lambda_i\in C^1([0,1])$ (see Remark \ref{regularity}). Therefore, using $(P_{\alpha})$, \ch{we obtain} $\alpha\in C^2(I)$ and
$$
\begin{aligned}
  \alpha'(1)-\alpha'(0)&=\io \alpha''\stackrel{(P_{\alpha})}=\frac{1}{\varepsilon}\sum_{i=1}^n\io \lambda_i \vec h_i\cdot{D^2\vec m}(\alpha+\vth_i)
   \\ & \stackrel{\eqref{eq:18}}=\frac{1}{\varepsilon}\sum_{i=1}^n\io \vec h_i \cdot \left(\begin{smallmatrix}0 & -1\\[0.5em] +1 & 0\end{smallmatrix}\right)\lambda_i D\vec m(\alpha+\vth_i)
   \stackrel{(P_{\vec h_{i}})}=-\frac \gamma \varepsilon \sum_{i=1}^n\vec h_i \cdot \left(\begin{smallmatrix}0 & -1\\[0.5em] +1 & 0\end{smallmatrix}\right)\vec h_i =0.
  \end{aligned}
$$
Hence $\alpha'(1)=\alpha'(0)\stackrel{(P_{\alpha})}=0$.
\end{proof}

\section{A constructive scheme; uniqueness of solutions to the Lagrange multiplier formulation}\label{sec:uniqueness}

In this section\ch{, where we assume $\eps,\gamma>0$,} we introduce a constructive scheme to obtain solutions of the Euler-Lagrange formulation \eqref{EQ}. We will prove its contractivity and, as a by-product, uniqueness of solutions to \eqref{EQ} (Theorem \ref{exun}). The scheme consists of two steps and works as follows.

\medskip

{\bf Step 1.} In the first step, we fix $\alpha\in C([0,1])$. We introduce the set
\begin{equation}\label{def-D}
D:=\{\vh\in\R^{2n} : \max_{1\le i\le n} |\vec{h}_i|\leq K \}, \quad \mbox{with $K<c_p^{-2}$ (cf. \eqref{eq:15}).}
\end{equation}
We will show that the chain
\begin{equation}\label{scheme}
\vh\stackrel{(P_{\vth_i})}\mapsto\vecth=(\vartheta_1,\dots,\vartheta_n)\stackrel{(P_{\lambda_i})}\mapsto{\bm\lambda}=(\lambda_1,\dots,\lambda_n)\stackrel{(P_{\vec h_i})}\mapsto \bm{\vec T}^{(\alpha)}(\vh)\in\R^{2n}
\end{equation}
defines a map  $\bm{\vec T}^{(\alpha)}:D\to D$. We then show that $\bm{\vec T}^{(\alpha)}$ is a contraction for $\gamma$ sufficiently large. Then, by Proposition \ref{contr}, there exists a unique fixed point of $\bm{\vec{T}}^{(\alpha)}$ in $D$, $\bm{\vec h}(\alpha)$:
$$
\bm{\vec h}(\alpha) = \bm{\vec{T}}^{(\alpha)}(\bm{\vec h}(\alpha)).
$$

{\bf Step 2.} In view of Step 1, we define $A:C([0,1])\to C([0,1])$ as the unique function such that
\begin{equation}
\label{app13}
\begin{cases}
\displaystyle -A(\alpha)''=-\frac{1}{\varepsilon}\sum_{i=1}^n \lambda_i(\alpha)\vec h_i(\alpha)\cdot D^2(\alpha+\vartheta_i(\alpha)) & \mbox{in $(0,1),$} \\
A(\alpha)|_0= A(\alpha)'|_1=0,
\end{cases}
\end{equation}
where $\vartheta_i(\alpha),\lambda_i(\alpha)$ are the unique solutions to \1eq, resp. \2eq, with $\bm{\vec h}=\bm{\vec h}(\alpha)$. We will prove that $A$ is a contraction, hence it has a unique fixed point, for $\gamma$ sufficiently large.

\medskip

Thanks to \eqref{app13} and to \eqref{scheme}, a quadruplet $(\bm{\vec h}(\alpha),\alpha,\bm\vartheta(\alpha),\bm\lambda(\alpha))$ is a solution to System \eqref{EQ} if and only if $\alpha$ is a fixed point of $A$. In particular, this implies the uniqueness result in Theorem \ref{exun}.

\begin{remark}{\rm There are three key features of System \eqref{EQ} that allow us to show that the maps $\vec{\bm T}^{(\alpha)}$ and $A$ are contractions. Namely:
\begin{itemize}
\item the boundary-value problems in \eqref{EQ} share the same structure, that of \eqref{general};
\smallskip
\item all lowest-order terms on the left-hand sides  of the differential equations in \eqref{EQ} are \emph{proportional} to the norm $|\vec h_i|$ of the applied fields, which in turn are controlled (for a minimizer) by the target shapes $\overline{\bm\vth}$ and by the regularization constant $\gamma$ through the bound \eqref{eq:13};
\smallskip
\item the equation for $\vec h_i$ in \eqref{EQ} contain on the right-hand side the pre-factor $\gamma^{-1}$. Accordingly, as long as $\gamma$ is large, we can control the applied fieds and hence also the solutions of \eqref{EQ}.
\end{itemize}
}  \end{remark}

We now prove the assertions formulated above.
\begin{prop}\label{prop-step1+2}
Let  $D$ as in \eqref{def-D}. Then:
\begin{itemize}
\item[$(i)$] For any $\alpha\in C([0,1])$ there exists $\gamma_1$ (depending on $\vthb$ and $K$) such that for any $\gamma>\gamma_1$ the map $\bm{\vec T}^{(\alpha)}:D\to \R^{2n}$ defined in \eqref{scheme} has a unique fixed point in $D$, $\bm{\vec h}(\alpha)$; in particular,
    $$
    \max_{1\le i\le n} |\vec{h}_i(\alpha)|\leq K<c_p^{-2};
    $$
\item[$(ii)$] there exists $\gamma_*> \gamma_1$ (depending on $\vthb$, $K$, and $\varepsilon$) such that the map $A:C([0,1])\to C([0,1])$ defined in \eqref{app13} has a unique fixed point, $\alpha=A(\alpha)$. Furthermore, $\alpha\in C^2([0,1])$ and $\alpha(0)=\alpha'(1)=0$;
\smallskip\item[$(iii)$] consequently, Theorem \ref{exun} holds.
\end{itemize}
\end{prop}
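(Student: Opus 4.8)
The plan is to carry out Steps~1 and~2 as two nested applications of Banach's fixed point theorem (Proposition~\ref{contr}), built on the solvability theory of~\eqref{general} in Lemma~\ref{qualit} — together with its straightforward linear analogue (Lax--Milgram plus the same energy and sup bounds) for the \emph{linear} problem $(P_{\lambda_i})$, whose datum $\overline\vartheta_i-\vartheta_i$ lies in $C([0,1])$ by the hypothesis $\tbar\in C([0,1])^n$ even though its $f$ is not bounded in $v$. For part~$(i)$, fix $\alpha\in C([0,1])$; for $\vh\in D$ one has $|\vec h_i|\le K<c_p^{-2}$, so $(P_{\vth_i})$ has a unique solution with $\|\vartheta_i\|_\infty\le|\vec h_i|$ and $\|\vartheta_i\|\le\frac{c_p}{1-Kc_p^2}|\vec h_i|$ (Lemma~\ref{qualit}, with $f(s,v)=-\vec h_i\cdot D\vec m(\alpha(s)+v)$ of Lipschitz constant $|\vec h_i|$ in $v$), then $(P_{\lambda_i})$ — linear, zeroth-order coefficient bounded by $|\vec h_i|\le K<c_p^{-2}$ — has a unique solution with $\|\lambda_i\|_\infty\le M:=\frac{c_p}{1-Kc_p^2}(\|\overline\vartheta_i\|_\infty+K)$, and $\bm{\vec T}^{(\alpha)}(\vh)_i=-\gamma^{-1}\io\lambda_i D\vec m(\alpha+\vartheta_i)$ obeys $|\bm{\vec T}^{(\alpha)}(\vh)_i|\le\gamma^{-1}\|\lambda_i\|_1\le M/\gamma$, so $\bm{\vec T}^{(\alpha)}$ maps $D$ into $D$ once $\gamma\ge M/K$. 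For the contraction I would compare the two chains driven by $\vh,\tilde\vh\in D$: testing the difference of the $(P_{\vth_i})$ with $\vartheta_i-\tilde\vartheta_i$ and using~\eqref{trigR2} (the $\alpha$-terms cancel) gives $\|\vartheta_i-\tilde\vartheta_i\|_\infty\le C(K)|\vec h_i-\tilde{\vec h}_i|$; the same manoeuvre on $(P_{\lambda_i})$, after splitting $\lambda_i r_i-\tilde\lambda_i\tilde r_i=r_i(\lambda_i-\tilde\lambda_i)+\tilde\lambda_i(r_i-\tilde r_i)$ with $\|r_i\|_\infty\le K$, $\|\tilde\lambda_i\|_\infty\le M$ and~\eqref{trigR2} for $r_i-\tilde r_i$, gives $\|\lambda_i-\tilde\lambda_i\|\le C(\tbar,K)|\vh-\tilde\vh|$; and~\eqref{trigR4} in $\bm{\vec T}^{(\alpha)}$ converts these into $|\bm{\vec T}^{(\alpha)}(\vh)-\bm{\vec T}^{(\alpha)}(\tilde\vh)|\le C(\tbar,K)\gamma^{-1}|\vh-\tilde\vh|$. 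Since none of these constants depends on $\alpha$, a single $\gamma_1=\gamma_1(\tbar,K)$ makes $\bm{\vec T}^{(\alpha)}$ a contraction of $D$ for every $\alpha$ and every $\gamma>\gamma_1$, and Proposition~\ref{contr} gives the fixed point $\bm{\vec h}(\alpha)\in D$.

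For part~$(ii)$, note that when $\gamma>\gamma_1$ the maps $\alpha\mapsto(\bm{\vec h}(\alpha),\bm\vartheta(\alpha),\bm\lambda(\alpha))$ are defined on all of $C([0,1])$, the datum $g_\alpha$ in~\eqref{app13} is continuous, so $A(\alpha)\in C^2([0,1])$ with $A(\alpha)(0)=A(\alpha)'(1)=0$; thus $A:C([0,1])\to C([0,1])$ and any fixed point inherits these properties. Since $-(A(\alpha)-A(\tilde\alpha))''=g_\alpha-g_{\tilde\alpha}$ with homogeneous boundary data, $\|A(\alpha)-A(\tilde\alpha)\|_\infty\le c_p\|g_\alpha-g_{\tilde\alpha}\|_\infty$, so it remains to bound $g_\alpha-g_{\tilde\alpha}$. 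I would first quantify the $\alpha$-dependence of the auxiliary solutions by redoing the estimates of part~$(i)$ with $\vh$ frozen and $\alpha$ varied, obtaining $\|\vartheta_i^{(\alpha,\vh)}-\vartheta_i^{(\tilde\alpha,\vh)}\|_\infty\le C(K)\|\alpha-\tilde\alpha\|_\infty$ and $\|\lambda_i^{(\alpha,\vh)}-\lambda_i^{(\tilde\alpha,\vh)}\|\le C(\tbar,K)\|\alpha-\tilde\alpha\|_\infty$; then, writing $\bm{\vec h}(\alpha)-\bm{\vec h}(\tilde\alpha)=\bm{\vec T}^{(\alpha)}(\bm{\vec h}(\alpha))-\bm{\vec T}^{(\tilde\alpha)}(\bm{\vec h}(\tilde\alpha))$ and combining the contraction of $\bm{\vec T}^{(\alpha)}$ with these bounds, $|\bm{\vec h}(\alpha)-\bm{\vec h}(\tilde\alpha)|\le C(\tbar,K)\gamma^{-1}\|\alpha-\tilde\alpha\|_\infty$, whence also $\|\vartheta_i(\alpha)-\vartheta_i(\tilde\alpha)\|_\infty,\|\lambda_i(\alpha)-\lambda_i(\tilde\alpha)\|\le C(\tbar,K)\|\alpha-\tilde\alpha\|_\infty$. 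Expanding $g_\alpha-g_{\tilde\alpha}$ with~\eqref{trigR}, each of the three resulting terms carries either a factor $|\vec h_i(\cdot)|\le M/\gamma$ or the factor $|\bm{\vec h}(\alpha)-\bm{\vec h}(\tilde\alpha)|\le C\gamma^{-1}\|\alpha-\tilde\alpha\|_\infty$, the remaining factors being $O(1)$, so $\|g_\alpha-g_{\tilde\alpha}\|_\infty\le C(\tbar,K)\varepsilon^{-1}\gamma^{-1}\|\alpha-\tilde\alpha\|_\infty$. Taking $\gamma_*=\gamma_*(\tbar,\varepsilon,K)>\gamma_1$ so large that $c_pC(\tbar,K)\varepsilon^{-1}\gamma^{-1}<1$, the map $A$ is a contraction of the complete space $C([0,1])$, and Proposition~\ref{contr} gives its unique fixed point $\alpha\in C^2([0,1])$ with $\alpha(0)=\alpha'(1)=0$.

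Part~$(iii)$ is then pure assembly. With $\alpha$ the fixed point of $A$ and $\vh=\bm{\vec h}(\alpha)$, $\vecth=\bm\vartheta(\alpha)$, $\bm\lambda=\bm\lambda(\alpha)$, equations $(P_{\vth_i})$ and $(P_{\lambda_i})$ hold by construction, $(P_{\vec h_i})$ is exactly $\vh=\bm{\vec T}^{(\alpha)}(\vh)$, and $(P_\alpha)$ with its boundary conditions (here $\varepsilon>0$ is used) is exactly $\alpha=A(\alpha)$; moreover $\HH\le K$ by part~$(i)$, and $\|\vartheta_i\|_\infty\le\|f\|_\infty=|\vec h_i|$ by the second bound in~\eqref{bound2}, which is~\eqref{new1}. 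Conversely, any solution of~\eqref{EQ} within~\eqref{cond} has $\vh\in D$ and $\alpha\in C([0,1])$ (Remark~\ref{regularity}), and reading its four equations in the order $(P_{\vth_i})\to(P_{\lambda_i})\to(P_{\vec h_i})\to(P_\alpha)$ while invoking uniqueness at each stage (Lemma~\ref{qualit} for $\vartheta_i$ and $\lambda_i$, part~$(i)$ for $\vh=\bm{\vec h}(\alpha)$, part~$(ii)$ for $\alpha=A(\alpha)$) forces it to coincide with the assembled one; this is Theorem~\ref{exun}, with $\gamma_*$ as in part~$(ii)$.

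I expect the main obstacle to lie in the bookkeeping of the estimates rather than in the fixed-point machinery. Two aspects require care: first, all the Lipschitz bounds for the auxiliary maps $\vh\mapsto\bm\vartheta\mapsto\bm\lambda$ must hold with constants \emph{uniform in $\alpha$ and in $\vh\in D$}, so that a single $\gamma_1$ (respectively $\gamma_*$) suffices; second, and more substantively, one must track which estimates carry a $\gamma^{-1}$ factor, since the bare bound $|\vec h_i|\le K$ is not enough for the Step~2 contraction — one genuinely needs $|\vec h_i(\alpha)|\le M/\gamma$ for the fixed-point fields, so that the $\gamma^{-1}$ prefactor of $(P_{\vec h_i})$ effectively acts \emph{twice} (once directly, once through the smallness of the fields that feed the lower-order terms of the other equations) and dominates the $O(1)$ and $\varepsilon^{-1}$ constants coming from the remaining terms.
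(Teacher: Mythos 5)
Your proposal is correct and follows essentially the same route as the paper: the same a priori bounds from Lemma \ref{qualit} give the self-map property of $\bm{\vec T}^{(\alpha)}$, the same Lipschitz estimates for the chain $\vh\mapsto\bm\vartheta\mapsto\bm\lambda$ (with absorption of the zeroth-order terms via $Kc_p^2<1$) yield the two nested contractions, and part $(iii)$ is the same assembly-plus-uniqueness argument. Your explicit use of $|\vec h_i(\alpha)|\le M/\gamma$ for the fixed-point fields in the Step-2 contraction is precisely the ingredient the paper also relies on (it is what removes the otherwise $\gamma$-independent coefficient of $\|\alpha-\tilde\alpha\|_\infty$ in the analogue of \eqref{app14}), so that remark is well placed rather than superfluous.
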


\begin{proof} We divide the proof into steps.

\medskip

{\it (A). There exists $\gamma_0$ such that $\bm{\vec{T}}^{(\alpha)}$ maps $D$ in itself.} Let $\vh\in D$. Thanks to the first equivalence in \eqref{eq:12}, \eqref{trigR2}, and \eqref{eq:15}, we can apply Lemma \ref{qualit} with $L=K$: for every $i=1,\dots,n$ there exists a unique solution $\vth_i\in\hl$ of \1eq, which satisfies
\begin{equation}\label{stima-the}
  \|\vartheta_i\|_\infty \stackrel{\eqref{bound2}_2}\le \ch{|\vec h_i| \le} K, \quad i=1,\dots,n.
\end{equation}
By the same argument, the second equivalence in \eqref{eq:12} and \eqref{trigR} allow to apply Lemma \ref{qualit} with $L= K$ and $f_0=\overline\vth_i-\vth_i$: for every $i=1,\dots,n$ there exists a unique solution $\lambda_i\in \hl$ of \2eq, such that
\begin{eqnarray}
\nonumber
\|\lambda_i\|_\infty & \stackrel{\eqref{bound2}_1}\leq & \frac{c_p}{{1-Kc_p^2}} \|\vartheta_i-\overline\vartheta_i\|_\infty \le \frac{c_p}{{1-Kc_p^2}} \left(\|\vartheta_i\|_\infty + \|\overline\vartheta_i\|_\infty\right)
\\
& \stackrel{\eqref{stima-the}}\le & \frac{c_p}{{1-Kc_p^2}} \left(K + \|\overline\vartheta_i\|_\infty\right)=C,
\label{stima-lam}
\end{eqnarray}
where from now on $C$ denotes a generic constant depending on $\tbar$ and $K$, but independent of $\gamma$ and $\eps$. Therefore
\begin{equation}
\label{app0bis}
|\vec{T}_i^{(\alpha)}(\vh)| \stackrel{\eqref{scheme},\eqref{EQ}_4}= \frac{1}{\gamma}\left|\io \lambda_i {D\vec m}(\alpha+\vth_i)  \right| \stackrel{\eqref{eq:16}}\leq \frac{2}{\gamma} \|\lambda_i\|_\infty \stackrel{\eqref{stima-lam}}\leq \frac{C}{\gamma}, \quad\mbox{$i=1,\dots,n$}.
\end{equation}
This implies that, for $\gamma>\gamma_0$ large enough, the operator $\bm{\vec{T}}^{(\alpha)}$ maps $D$ in itself.

\medskip

For reasons which will be clarified later, we postpone the proof of $(i)$, and for the moment we assume it to be true.

\medskip

{\it (B). Proof of $(ii)$ assuming $(i)$.} Assume $\gamma>\gamma_1$, with $\gamma_1$ as given in $(i)$. For $\alpha$ and $\tilde\alpha$ in $C([0,1])$, let $\vartheta_i=\vartheta_i(\alpha)$ and $\tilde\vartheta_i=\tilde\vartheta_i(\alpha)$, resp. $\lambda_i=\lambda_i(\alpha)$ and $\tilde\lambda_i=\tilde\lambda_i(\alpha)$, be the unique solutions to \1eq, resp. \2eq, with $\bm{\vec h}=\bm{\vec h}(\alpha)$ and $\vec{\tilde{\bm{h}}}=\vec{\tilde{ \bm h}}(\tilde{\alpha})$ as defined in $(i)$.  It follows from \eqref{stima-the} and \eqref{stima-lam} that
\begin{equation}
\label{app0}
\|\vth_i\|_{\infty}+\|\lambda_i\|_{\infty} \le C \quad\mbox{and}\quad  \|\tilde{\vth}_i\|_{\infty}+\|\tilde{\lambda}_i\|_{\infty}\leq C, \quad i=1,\dots,n.
\end{equation}
Let $A(\alpha)$ and $A(\tilde\alpha)$ be defined by \eqref{app13}, and note that \eqref{app13} is equivalent to
\begin{equation}
\label{app13bis}
A(\alpha)(s):=-\frac{1}{\varepsilon}\sum_{i=1}^n\int_0^s\int_{s'}^1 \lambda_i(s'')\vec{h}_i\cdot D^2(\alpha(s'')+\vth_i(s'')) ds'' ds', \qquad \forall s\in[0,1]\,;
\end{equation}
in particular, $A(\alpha)\in C^2([0,1])$. Therefore
\begin{eqnarray}
\label{app14}
\lefteqn{\|A(\alpha)-A(\tilde{\alpha})\|_\infty}
 \nonumber \\
&\stackrel{\eqref{app13bis}} \leq & \frac{1}{\varepsilon}\sum_{i=1}^n \|\lambda_i\vec h_i\cdot D^2\vec m(\alpha+\vth_i)-\tilde{\lambda}_i\vec{\tilde h}_i\cdot D^2\vec m(\tilde{\alpha}+\tilde{\vth}_i)\|_{\infty}
\nonumber \\
&\stackrel{\eqref{trigR}}\leq & \frac{1}{\varepsilon}\sum_{i=1}^n \left(|\vec{h}_i| \|\lambda_i-\tilde\lambda_i\|_{\infty} + \|\tilde{\lambda}_i\|_{\infty} |\vec{h}_i-\vec{\tilde h}_i| + |\vec{\tilde{h}}_i|\|\tilde{\lambda}_{i}\|_{\infty} \left(\|\alpha-\tilde{\alpha}\|_{\infty}+\|\vartheta_i-\tilde{\vartheta}_i\|_{\infty}\right)\right)
\nonumber \\
&\stackrel{\eqref{def-D},\eqref{app0}}\leq &
\frac{C}{\varepsilon}\sum_{i=1}^n \left(|\vec{h}_i-\vec{\tilde h}_i|+\|\alpha-\tilde{\alpha}\|_{\infty}+\|\vartheta_i-\tilde{\vartheta}_i\|_{\infty} +\|\lambda_i-\tilde\lambda_i\|_{\infty}\right).
\end{eqnarray}
Now we will estimate the right hand side of \eqref{app14}. Taking $\lambda_i-\tilde{\lambda}_i$ as test function in the weak formulations for $\lambda_i$ and $\tilde{\lambda}_i$ (cf. \eqref{eq:12} and \eqref{weak-form}) and subtracting the resulting equations, we obtain
\begin{eqnarray}
\label{app19}
\io |(\lambda_i-\tilde{\lambda}_i)'|^2&=&\io \left(\lambda_i\vec h\cdot{D^2\vec m}(\alpha+\vth_i)-\tilde{\lambda}_i\vec{\tilde{h}}_i\cdot D^2\vec m(\tilde\alpha+\tilde{\vth}_i)\right)(\lambda_i-\tilde{\lambda}_i) \nonumber \\
&&+\io(\vth_i-\tilde{\vth}_i)(\lambda_i-\tilde{\lambda}_i) \nonumber 
\\
&\stackrel{\eqref{trigR}}{\le}& \left(\|\tilde{\lambda}_i\|_{\infty}|\vec{\tilde{h}}_i|+1\right)\io |\vth_i-\tilde{\vth}_i||\lambda_i-\tilde{\lambda}_i| \nonumber \\
&&+\|\tilde{\lambda}_i\|_{\infty}|\vec{h}_i-\vec{\tilde{h}}_i|\io |\lambda_i-\tilde{\lambda}_i| +\|\tilde{\lambda}_i\|_{\infty}|\vec{\tilde{h}}_i| \io |\alpha-\tilde{\alpha}||\lambda_i-\tilde{\lambda}_i| \nonumber \\
&&+|\vec{h}_i|\io |\lambda_i-\tilde{\lambda}_i|^2.
\end{eqnarray}
We estimate the last summand on the r.h.s. of \eqref{app19} using the Poincar\'e inequality and the definition of $D$:
$$
|\vec{h}_i|\io |\lambda_i-\tilde{\lambda}_i|^2 \stackrel{\eqref{def-c},\eqref{def-D}}\le K c_p^2 \io |(\lambda_i-\tilde{\lambda}_i)'|^2.
$$
Absorbing this summand on the left-hand side of \eqref{app19}, we obtain:
\begin{eqnarray*}
\lefteqn{\underbrace{(1-Kc_p^2)}_{\text{$> 0$ by }\eqref{eq:15}}\io |(\lambda_i-\tilde{\lambda}_i)'|^2
 \le \left(\io|\lambda_i-\tilde{\lambda}_i|\right) \times}
 \\ && \times \left(
 \left(\|\tilde{\lambda}_i\|_{\infty}|\vec{\tilde{h}}_i|+1\right)\|\vth_i-\tilde{\vth}_i\|_\infty + \|\tilde{\lambda}_i\|_{\infty}|\vec{h}_i-\vec{\tilde{h}}_i| + \|\tilde{\lambda}_i\|_{\infty}|\vec{\tilde{h}}_i|\|\alpha-\tilde{\alpha}\|_\infty\right)
\\ & \stackrel{\eqref{def-D},\eqref{app0}}\leq & C  \left(\io|\lambda_i-\tilde{\lambda}_i|\right) \left(|\vec{h}_i-\vec{\tilde{h}}_i| + \|\vth_i-\tilde{\vth}_i\|_{\infty} + \|\alpha-\tilde{\alpha}\|_\infty\right).
\end{eqnarray*}
Using H\"older and Poincar\'e inequalities, we deduce
\begin{equation}
\label{app20}
\|\lambda_i-\tilde{\lambda}_i\|_{\infty} \stackrel{\eqref{def-S}}\leq \|\lambda_i-\tilde{\lambda}_i\| \leq C\left(|\vec{h}_i-\vec{\tilde{h}}_i| + \|\vth_i-\tilde{\vth}_i\|_{\infty} + \|\alpha-\tilde{\alpha}\|_\infty\right).
\end{equation}
In order to estimate $\|\vth_i-\tilde{\vth}_i\|_{\infty}$, we follow the same line of argument. We choose $\vth_i-\tilde{\vth}_i$ as test function in the weak formulations for $\vth_i$ and for $\tilde{\vth}_i$ (cf. \eqref{weak-form}): subtracting the resulting equations, we have
\begin{eqnarray*}
\io |(\vth_i-\tilde{\vth}_i)'|^2 &=& \io \bigl(\vec h_i\cdot D\vec m(\alpha+\vth_i)-\vec{\tilde{h}}_i\cdot D\vec m(\tilde\alpha+\tilde{\vth}_i)\bigr)(\vth_i-\tilde{\vth}_i) \nonumber
\\
& \stackrel{\eqref{trigR2}}\le & \left(|\vec{h}_i-\vec{\tilde{h}}_i|+|\vec{h}_i|\|\alpha-\tilde\alpha\|_\infty\right)\io|\vth_i-\tilde{\vth}_i|+|\vec{{h}}_i|\io |\vth_i-\tilde{\vth}_i|^2.
\end{eqnarray*}
As above, the second summand may be absorbed on the left-hand side via Poincar\'e inequality and the assumption that $K<c_p^{-2}$, whereas the first one can be treated by H\"older and Poincar\'e inequality (the specific constant being irrelevant in this case). Altogether, we obtain
\begin{equation}
\label{app8}
\|\vth_i-\tilde{\vth}_i\|_{\infty}\stackrel{\eqref{def-S}}\le \|\vth_i-\tilde{\vth}_i\|\leq C\left(|\vec{h}_i-\vec{\tilde{h}}_i|+\|\alpha-\tilde\alpha\|_\infty\right).
\end{equation}

Now we estimate $|\vec{h}_i-\vec{\tilde{h}}_i|$. By the definition of $T_i^{(\alpha)}$ we deduce that
\begin{eqnarray}
\label{app18}
|\vec{h}_i-\vec{\tilde{h}}_i| & \leq &\frac{1}{\gamma}\|\lambda_i{D\vec m}(\alpha+\vth_i)-\tilde{\lambda}_iD\vec m(\tilde{\alpha}+\tilde{\vth}_i)\|_{\infty} \nonumber \\
& \stackrel{\eqref{trigR4},\eqref{app0}}\leq & \frac{C}{\gamma}\left(\|\alpha-\tilde{\alpha}\|_\infty+\|\vth_i-\tilde{\vth}_i\|_\infty+\|\lambda_i-\tilde{\lambda}_i\|_\infty\right).
\end{eqnarray}
Inserting \eqref{app20} and \eqref{app8} in \eqref{app18}, we deduce that there exists $C_2$ such that
\begin{equation}\label{ref-a-dopo}
|\vec{h}_i-\vec{\tilde{h}}_i|\leq\frac{C_2}{\gamma}\left(\|\alpha-\tilde{\alpha}\|_\infty+|\vec{h}_i- \vec{\tilde{h}}_i|\right).
\end{equation}
Taking $\gamma_2=C_2$, we have $C_2/\gamma<1$ for $\gamma>\gamma_2$, so that
\begin{equation}
\label{app22}
|\vec{h}_i-\vec{\tilde{h}}_i|\leq \frac{C_2}{\gamma-C_2}\|\alpha-\tilde{\alpha}\|_\infty
\end{equation}
for $\gamma>\gamma_2$.  Using \eqref{app22} in \eqref{app8}, we obtain
\begin{equation}
\label{app23}
\|\vth_i-\tilde{\vth}_i\|_{\infty}\leq \frac{C}{\gamma-C_2}\|\alpha-\tilde{\alpha}\|_\infty.
\end{equation}
In turn, using \eqref{app22} and \eqref{app23} in \eqref{app20}, we obtain
\begin{equation}
\label{app24}
\|\lambda_i-\tilde{\lambda}_i\|_{\infty}\leq \frac{C}{\gamma-C_2}\|\alpha-\tilde{\alpha}\|_\infty.
\end{equation}
Finally, inserting \eqref{app22}, \eqref{app23}, and \eqref{app24} into \eqref{app14}, we deduce that there exist $C_3$ such that
\begin{equation}
\label{app25}
\|A(\alpha)-A(\tilde{\alpha})\|_\infty\leq \frac 1 \varepsilon \frac{C_3}{\gamma-C_2}\|\alpha-\tilde{\alpha}\|_\infty.
\end{equation}
We now set $\gamma_3=C_2+C_3/\varepsilon >\gamma_2$, so that the prefactor in \eqref{app25} is smaller than $1$ for every $\gamma>
\max\{\gamma_1,\gamma_3\}=:\gamma_*$.  By Proposition \ref{contr}, for $\gamma>\gamma_*$ there exists a unique fixed point $\alpha\in C([0,1])$ of $A$.

\medskip

We finally return to the proof of $(i)$, which we postponed since its proof is simpler than that of $(ii)$, in that we may use the same estimates as in $(B)$ with $\alpha=\tilde\alpha$.

\medskip

{\it (C). Proof of (i).} We prove that $\bm{\vec{T}}^{(\alpha)}$ is a contraction. Let $\gamma>\gamma_0$, as given in $(A)$. Given $\vh, \vht\in D$, we define  $\vth_i$ and $\tilde{\vth}_i$, resp. $\lambda_i$ and  $\tilde{\lambda}_i$, as the
corresponding unique solutions of \1eq, resp. \2eq. Then, the same arguments of $(B)$ may be applied with $\alpha=\tilde\alpha$, yielding
\begin{equation}
\label{app5}
\|\lambda_i-\tilde{\lambda}_i\|_{\infty} \leq C\left(|\vec{h}_i-\vec{\tilde{h}}_i|+\|\vth_i-\tilde{\vth}_i\|_{\infty}\right)
\end{equation}
(cf. \eqref{app20}) and
\begin{equation}
\label{app8bis}
\|\vth_i-\tilde{\vth}_i\|_{\infty}\stackrel{\eqref{def-S}}\le \|\vth_i-\tilde{\vth}_i\|\leq C|\vec{h}_i-\vec{\tilde{h}}_i|
\end{equation}
(cf. \eqref{app8}). Therefore
\begin{eqnarray*}
|\vec{T}_i^{(\alpha)}(\vh)-\vec{T}_i^{(\alpha)}(\vht)|
&\stackrel{\eqref{scheme},\eqref{EQ}_4}= &
\frac{1}{\gamma}\left| \io \left(\lambda_i {D\vec m}(\alpha+\vth_i) -\tilde\lambda_i {D\vec m}(\alpha+\tilde{\vth}_i)\right)\right|
\\ & \stackrel{\eqref{trigR4},\eqref{stima-lam}}\leq & \frac{C}{\gamma}\left\{\|\lambda_i-\tilde{\lambda}_i\|_{\infty}+\|\vth_i-\tilde{\vth}_i\|_{\infty}\right\}
 \stackrel{\eqref{app5},\eqref{app8bis}}\leq \frac{C_1}\gamma  |\vec{h}_i-\vec{\tilde{h}}_i|.
\end{eqnarray*}
Choosing $\gamma_1=C_1$, we conclude that $\bm{\vec{T}}^{(\alpha)}$ is a contraction for every $\gamma>\gamma_1$.

\medskip

{\it (D). Proof of (iii).} Theorem \ref{exun} is an immediate consequence of $(i)$ and $(ii)$. Indeed, let $\alpha=A(\alpha)$ be the fixed point of $A$ identified in $(ii)$, and let $\vh(\alpha)=\bm{\vec{T}}^{(\alpha)}(\vh(\alpha))$ be the fixed point identified in $(i)$.  Then, by construction, the quadruplet $(\vh(\alpha),\alpha, \bm\vartheta(\alpha), \bm\lambda(\alpha))$ is a solution to system \eqref{EQ} in the class \eqref{cond}. Viceversa, if two solutions of \eqref{EQ} exist in that class, then they are both fixed points of $A$, hence they coincide.

\end{proof}

\begin{remark}{\rm
 Under the provision that $K<1$, a bound similar to \eqref{app5} might be obtained from the representation formula
  \begin{equation*}
\lambda_i-\tilde\lambda_i=\int_0^s\int_{s'}^{1}\left(\lambda_i\vec h_i\cdot D^2\vec m(\alpha+\vth_i)-\tilde\lambda_i\vec{\tilde{ h}}_i\cdot D^2\vec m(\alpha+\tilde{\vth}_i)\right)ds''ds'.
\end{equation*}
Indeed, from
\[
    \|\lambda_i-\tilde\lambda_i\|_{\infty}\stackrel{\eqref{trigR}}\le \|\tilde{\lambda_i}\|_{\infty}|\vec{\tilde{ h}}_i|\|\vth_i-\tilde{\vth}_i\|_{\infty}+\|\tilde\lambda_i\|_{\infty}|\vec h_i-\vec{\tilde{h}}_i|+|\vec h_i|\|\lambda_i-\tilde\lambda_i\|_{\infty},
  \]
we obtain, for $K<1$,
    \begin{eqnarray*}
      \|\lambda_i-\tilde\lambda_i\|_{\infty} &  \stackrel{\eqref{stima-lam},\eqref{def-D}} \leq & \frac 1{1-K}\left( K C(\overline{\bm\vth},K)\|\vth_i-\tilde{\vth}_i\|_{\infty}+C(\overline{\bm\vth},K)|\vec h_i-\vec{\tilde h}_i|\right)\\
      & \leq  &  C(\overline{\bm\vth},K)\bigl(\|\vth_i-\tilde{\vth}_i\|_{\infty}+|\vec h_i-\vec{\tilde h}_i|\bigr).
    \end{eqnarray*}
However, the requirement $K<1$ is stricter than our assumption \eqref{eq:15} because $c_p<1$.
}\end{remark}

\begin{remark}\label{rem:technical}
\ch{\rm Note that $\gamma_{**}=\max(\overline\Theta^2/K^2,\gamma_*)$ blows up both as $K$ tends to $0$ and as $K$ tends to $c_p^{-2}$. The blow up for $K$ small is obvious: as $K$ tends to $0$ the maximum allowed applied field tends to $0$ in intensity, and to limit the applied field we need  $\gamma$ large. The blow up as $K\to c_p^{-2}$ is of a technical nature and follows from the blow up of $\gamma_*$. In turn, the blow up of $\gamma_*$ follows from the estimates in the proof of Theorem \ref{exun}, which become degenerate as $K$ tends to $c_p^{-2}$ (see e.g. \eqref{stima-lam}-\eqref{app0bis}). Ultimately, this is because our estimates rely only on \eqref{bound2}$_1$ in Lemma \ref{qualit}, which becomes degenerate when the Lipschitz constant $L$ (identified with the intensity of the magnetic field) approaches $c_p^{-2}$.
}
\end{remark}

Let us conclude the Section with a digression on the case in which $\alpha$ is fixed. Part $(i)$ of Proposition \ref{prop-step1+2} may be rephrased as follows:
\begin{prop}
\label{lamu13}
Let $\tbar\in C([0,1])^n$ and let $K>0$ such that \eqref{eq:15} holds. Then there exists $\gamma_*=\gamma_*(\tbar,K)$ such that for every $\gamma> \gamma_*$ and any $\alpha\in C([0,1])$ there exists a unique solution of the system
\begin{equation}
\label{EQ1}
\left\{\begin{array}{ll}
\displaystyle -\vth_i''-\vec h_i\cdot{D\vec m}(\alpha+\vth_i)=0, & \vartheta_i(0)=\vartheta'_i(1)=0 \\
\displaystyle -\lambda_i''-\lambda_i \vec h_i\cdot{D^2\vec m}(\alpha+\vth_i)=\vth_i-\vthb_i, & \lambda_i(0)=\lambda_i'(1)=0 \\
\displaystyle \vec{h}_{i}=-\frac{1}{\gamma}\io \lambda_i {D\vec m}(\alpha+\vth_i) &\\
\end{array}\right.
\forall i=1\dots,n
\end{equation}
within the following set:
$(\vh,\vecth,\bm{\lambda})\in D\times\hl^n \times\hl^n$.
\end{prop}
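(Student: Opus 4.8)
The plan is to recognize Proposition~\ref{lamu13} as a direct restatement of part~$(i)$ of Proposition~\ref{prop-step1+2}, the only extra observation being that the threshold $\gamma_*$ produced there can be taken \emph{independently of} $\alpha$.

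First I would reduce \eqref{EQ1} to a fixed-point equation. Fix $\alpha\in C([0,1])$ and $\vh\in D$, so that $\max_{1\le i\le n}|\vec h_i|\le K<c_p^{-2}$. By \eqref{trigR2} the nonlinearity in \1eq\ is Lipschitz in the unknown with constant $\le K$, so Lemma~\ref{qualit} produces a unique $\vth_i=\vth_i(\vh,\alpha)$, with $\|\vth_i\|_\infty\le|\vec h_i|\le K$ by \eqref{stima-the}; then, by \eqref{trigR}, Lemma~\ref{qualit} applied to \2eq\ produces a unique $\lambda_i=\lambda_i(\vh,\alpha)$. Hence a triplet $(\vh,\vecth,\bm\lambda)\in D\times\hl^n\times\hl^n$ solves \eqref{EQ1} if and only if $\vecth$ and $\bm\lambda$ are these unique solutions and $\vh$ is a fixed point of the map $\bm{\vec T}^{(\alpha)}:D\to\R^{2n}$ from \eqref{scheme}, i.e. $\vec h_i=-\tfrac1\gamma\io\lambda_i D\vec m(\alpha+\vth_i)$ for all $i$. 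So the assertion is equivalent to saying that $\bm{\vec T}^{(\alpha)}$ has a unique fixed point in $D$ as soon as $\gamma$ exceeds a threshold not depending on $\alpha$.

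Next I would reuse the estimates in steps~(A) and~(C) of the proof of Proposition~\ref{prop-step1+2}. Step~(A) gives $\gamma_0$ such that $\bm{\vec T}^{(\alpha)}$ maps $D$ into $D$ for $\gamma>\gamma_0$ (via \eqref{stima-the}, \eqref{stima-lam}, \eqref{app0bis}), and step~(C) gives $\gamma_1=C_1$ such that $\bm{\vec T}^{(\alpha)}$ is a contraction on $D$ for $\gamma>\gamma_1$ (via \eqref{trigR4}, \eqref{stima-lam}, \eqref{app5}, \eqref{app8bis}). The point I would make explicit is that every constant entering these bounds depends only on $\tbar$ and $K$: indeed $|D^N\vec m|\equiv1$, the Lipschitz constants in \eqref{trigR2}--\eqref{trigR} involve $|\vec h_i|\le K$ but never $\alpha$, and the Poincar\'e and Sobolev constants in \eqref{def-c}--\eqref{def-S} are universal; moreover, in steps~(A) and~(C) one has $\alpha=\tilde\alpha$, so all terms involving $\|\alpha-\tilde\alpha\|_\infty$ drop out and play no role. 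Consequently $\gamma_1$ is $\alpha$-independent. Setting $\gamma_*:=\max\{\gamma_0,\gamma_1\}$ and invoking the Banach fixed-point theorem (Proposition~\ref{contr}) yields, for every $\gamma>\gamma_*$ and every $\alpha\in C([0,1])$, a unique fixed point $\bm{\vec h}(\alpha)\in D$, and therefore a unique solution of \eqref{EQ1} in $D\times\hl^n\times\hl^n$.

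Since the substance has already been carried out in Proposition~\ref{prop-step1+2}, I do not expect a genuine obstacle; the only delicate point is the bookkeeping that no constant in the chain of estimates secretly depends on $\alpha$, which I would settle simply by tracking that dependence through steps~(A) and~(C) as indicated above.
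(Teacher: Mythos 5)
Your proposal is correct and follows exactly the paper's route: the paper itself presents Proposition~\ref{lamu13} as a rephrasing of part $(i)$ of Proposition~\ref{prop-step1+2}, whose statement already records that $\gamma_1$ depends only on $\tbar$ and $K$. Your explicit bookkeeping that no constant in steps (A) and (C) depends on $\alpha$ (since $\alpha=\tilde\alpha$ there and all Lipschitz bounds involve only $|\vec h_i|\le K$) is precisely the observation that justifies the paper's rephrasing.
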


\begin{remark}{\rm
\label{minser}
For fixed $\alpha$, the solution in Proposition \ref{lamu13} is the unique stationary point of the functional
\begin{equation}
\label{app26}
\tilde{\mathcal{C}}(\vh,\vecth)=\frac{1}{2}\sum_{i=1}^n \io |\vth_i-\vthb_i|^2 + \frac{\gamma}{2} \sum_{i=1}^n |\vec{h}_i|^2,
\end{equation}
in the admissible set
$$
(\vh,\vecth)\in\tilde{\mathcal{A}}:=\left\{(\vh,\vecth)\in D\times\hl^n: \ \vth_i \text{ solves \eqref{pthi} for every } i=1,\dots,n \right\}.
$$
Therefore, arguing as we did for the full problem, for $\gamma$ sufficiently large it follows from Proposition \ref{lamu13} that:
\begin{itemize}
\item $\tilde{\mathcal{C}}$ has a unique minimizer;

\smallskip

\item looking for the minimum $(\vh,\vecth)$ of $\tilde{\mathcal{C}}$ is equivalent to looking for the fixed point of $\bm{\vec{T}}^{(\alpha)}$.
\end{itemize}
}\end{remark}

\section{\ch{Concluding remarks}}\label{sec:concl}
We have considered a beam clamped at one side, modeled as a planar elastica. The beam has a permanent magnetization (the design $\alpha$), hence it deforms under the action of spatially-constant magnetic fields $\vec h_i$, $i=1,\dots,n$ (the controls). Given a list of $n$ prescribed target shapes $\overline \vartheta_i$ ($i=1,\dots,n$), we have looked for optimal design and controls in order for the corresponding shapes $\vartheta_i$ ($i=1,\dots,n$) of the beam to get as close as possible to the corresponding targets. Choosing the cost functional as in \eqref{cost} has lead us to the formulation of an optimal design-control problem (cf. \eqref{main-problem}), whose minimization has been studied by both direct and indirect methods. Loosely speaking, we have shown that:
\begin{itemize}
\item minimizers \ch{$(\vh,\alpha)$} exist (\ch{Theorem} \ref{min});
\smallskip
\item provided the intensity of $\vh$ is sufficiently small (cf. \eqref{bound-on-H})\ch{, and otherwise in ``generic'' cases (see Theorem \ref{lagmul2} and the comments below it)}, minimizers solve the Lagrange multiplier formulation \eqref{EQ};
\smallskip
\item if the parameter $\gamma$ penalizing the cost of the fields' intensity is sufficiently \ch{large}, the minimizer is unique, satisfies \eqref{bound-on-H}, and is the unique solution to the Lagrange multiplier formulation \eqref{EQ} (Theorem \ref{exun} and \ch{Theorem} \ref{coro-un}).
\end{itemize}

In what follows, we briefly discuss a numerical scheme which naturally emerges from the proof of Theorem \ref{exun}, as well as a different choice of the cost functional, using residuals.
\ch{
We also point out two possible generalizations of our choice of the cost.
}

\begin{remark}[\bf The numerical scheme]
{\rm  The proof of Theorem \ref{exun} suggests an alternative to the numerical scheme proposed in \cite{ciambella2019form}. The new
  scheme is based on two nested
  \begin{minipage}{0.5\linewidth}loops. In the inner loop, $\alpha$ is fixed and $\vec{\bm h}$, $\bm\lambda$, and $\bm\vartheta$ are computed by a fixed point iteration scheme which uses, in the order, equations $(P_{\vartheta_i})$, $(P_{\lambda_i})$ and $(P_{\vec h_i})$; in the outer loop, $\alpha$ is updated by using the equation $(P_{\alpha})$ with $\vec{\bm h}$, $\bm\lambda$, and $\bm\vartheta$ obtained from the inner loop. Each loop terminates when the update of each variable results in an increment below a certain tolerance $tol$. The algorithm is described in the pseudocode aside. Note that, in this algorithm, steps to be performed for $i=1,\dots n$ do not need to be carried out sequentially, but can also be done in parallel, since they are independent on each other.
  \end{minipage}\quad
\begin{minipage}[H]{0.5\textwidth}
\tiny  \begin{algorithm}[H]
    {\bf Initialisation}:\\
    \qquad $\alpha\longleftarrow \text{initial guess }\alpha^{(0)}$\;
    \qquad $\vec h_i\longleftarrow\text{initial guess }\vec h_i^{(0)}$, $i=1,\ldots n$\;
    \qquad $\lambda_i\longleftarrow\text{initial guess }\lambda_i^{(0)}$, $i=1,\ldots n$\;
    \qquad $tol\longleftarrow\text{tolerance}$\;
    \Repeat{$\|\alpha^{old}-\alpha\|_{\infty}\le tol$}
    {
    \Repeat{$\max_{i=1}^n |\vec h_i^{old}-\vec h_i|\le tol$}
    {
    {$\vartheta_i\longleftarrow$ solve $(P_{\vartheta_i})$, $i=1,\ldots, n$\;
      $\lambda_i\longleftarrow$ solve $(P_{\lambda_i})$, $i=1,\ldots, n$\;
      $\vec h_i^{old}\longleftarrow \vec h_i$\;
      $\vec h_i\longleftarrow $ solve $(P_{\vec h_i})$, $i=1,\ldots, n$\;
    }
  }
   $\alpha^{old}\longleftarrow\alpha$\;
    $\alpha\longleftarrow$ solve $(P_{\alpha})$\;
}
\end{algorithm}
\end{minipage}
\par
  }
\end{remark}

\begin{remark}[\bf Using residuals to assess shape attainment]\label{rem-residuals}
{\rm
  Shape programming has been addressed in \cite{lum2016shape} under slightly more general conditions than those considered in this paper. In particular, \cite{lum2016shape} allows the  magnetization intensity to be non-constant and the magnetic field to be non-uniform, and assigns a different weight to each shape. Within our framework (constant magnetic intensity, uniform applied field, and same weight for all shapes), the approach proposed in \cite{lum2016shape} would lead to the minimization of the following functional:
 \begin{equation}
\label{costilde}
\widetilde E\ch{(\vh,\alpha)}=\sum_{i=1}^n \int_0^1\left| -\overline\vartheta_i''-\vec h_i\cdot\ch{D}\vec m(\alpha+\overline\vth_i)\right|^2.
\end{equation}
The integrands in \eqref{costilde} represent {\it residuals}, in the sense that they vanish \ch{on attainable targets.} Such minimization would be carried out in the space of designs $\alpha$ whose first $k$ Fourier coefficients are in a bounded set and control fields $\vec{\bm h}$ whose magnitude does not exceed a constant $K$. \ch{It would be useful} to have estimates of the \ch{\emph{attainment error}:}
\[
        E\ch{(\vh,\alpha)}=\frac 12 \sum_{i=1}^n \int_0^1 |\overline\vartheta_i-\Theta_\alpha(\vec h_i)|^2
      \]
      (cf. \eqref{def:Theta_alpha}) for solutions of both the optimization problem considered in \cite{lum2016shape} and the problem considered in this paper.
      In this respect, a first problem to be solved would be obtaining a bound of $E\ch{(\vh,\alpha)}$ in terms of $\widetilde E\ch{(\vh,\alpha)}$, where $\ch{(\vh,\alpha)}$ is a minimizer of \eqref{costilde}.
}
\end{remark}

\begin{remark}[\bf Variable intensity of the magnetization] {\rm Further developments of the present work may include a variable intensity of the magnetization. In this case, if we let $\mu(s) M_0$  be the magnetization density in the undeformed configuration, then the energy functional \ch{\eqref{eq:6}} would be replaced by
    \[
      \widetilde{\mathcal E}(\vartheta)=\int_0^1\frac 12 (\vartheta')^2-\mu \vec h\cdot\vec m(\vartheta+\alpha).
    \]
Such modification would also require a regularization to limit the oscillations of $\mu$, as well as a penalization of negative values. Instead of choosing $\mu$ and $\alpha$ as design variables for the magnetization, one might choose the vector $\vec\mu=\mu\vec m(\alpha)$. In terms of this vector, the energy would take the form
 \[
      \widetilde{\mathcal E}(\vartheta)=\int_0^1\frac 12 (\vartheta')^2-\vec h\cdot\mathbf R(\vartheta)\vec\mu,
    \]
    where $\mathbf R(v)=\left(\begin{smallmatrix}\cos v & -\sin v \\[0.5em] \sin v & \cos v\end{smallmatrix}\right)$ is the counterclockwise rotation of the angle $v$. Such extension should be accompanied by a penalization of the oscillation of the vector field $\vec\mu$.}
\end{remark}

\begin{remark}[\bf Non-quadratic costs] \ch{{\rm A non trivial generalization of the present work consists in considering more general costs, of the form
 \begin{equation}
\label{pcost}
\sum_{i=1}^n \|\overline\vartheta_i-\vartheta\|_{L^2}^{2} +e(\alpha,\alpha')+ g(\vh).
\end{equation}
Of particular interest might be obstacle-type penalization. Such more general situation  would likely require different techniques, with respect to those used in this paper.
}}
\end{remark}

\section*{Appendix}
\setcounter{defin}{0}
\setcounter{equation}{0}
\renewcommand{\theequation}{A.\arabic{equation}}
\renewcommand{\thedefin}{A.\arabic{defin}}

\subsection*{Reminders of calculus in Banach spaces} Let $(\mathcal{X},\|\cdot\|_{\mathcal{X}}), (\mathcal{Y},\|\cdot\|_{\mathcal{Y}})$ be Banach spaces, $\mathcal{U}$ an open subset of $\mathcal{X}$. We shall also consider a generic map $F:\mathcal{U}\to\mathcal{Y}$.
\begin{defin}
\label{gat}
$F$ has G\^ateaux derivative $F'(x_0)$ at the point $x_0\in\mathcal{U}$ if there exist
$$
\displaystyle F'(x_0)(v):=\lim_{t\to 0} \frac{F(x_0+t v)-F(x_0)}{t}, \qquad \forall v\in\mathcal{X}.
$$
\end{defin}
\begin{defin}
\label{fre}
$F$ is called Fr\'echet differentiable at $x_0\in\mathcal{U}$ if there exists $DF(x_0)\in\mathcal{L}(\mathcal{X},\mathcal{Y})$ such that
$$
\displaystyle \lim_{\| h\|_{\mathcal{X}}\to 0}\frac{\|F(x_0+h)-F(x)-DF(x_0)(h)\|_{\mathcal{Y}}}{\|h\|_{\mathcal{X}}}=0.
$$
\end{defin}
Moreover we give the notion of continuous differentiable operator. Let $T$ belong to $\mathcal{L}(\mathcal{X},\mathcal{Y})$. We recall that the operator norm is defined by
$$
\|T\|_{\mathcal{L}(\mathcal{X},\mathcal{Y})}:=\sup_{\{0\ne x\in\mathcal{X}\}} \frac{\|T(x)\|_{\mathcal{Y}}}{\|x\|_{\mathcal{X}}}=\sup_{\|x\|_{\mathcal{X}}=1} \|T(x)\|_{\mathcal{Y}}.
$$
\begin{defin}
\label{c1}
We say that $F$ is $C^1$ if $DF(x)$ exists for every $x\in\mathcal{U}$ and $DF:\mathcal{U}\to\mathcal{L}(\mathcal{X},\mathcal{Y})$ is a continuous operator.
\end{defin}
We recall the following proposition linking G\^ateaux derivability and Fr\'echet differentiability.
\begin{prop}
\label{equiv}
If $F$ admits G\^ateaux derivative $F'(x)$ in an open neighborhood $\mathcal{V}\subset\mathcal{U}$ of $x_0$ and $F':\mathcal{V}\to\mathcal{L}(\mathcal{X},\mathcal{Y})$ is continuous at $x_0$, then $F$ is Fr\'echet differentiable at $x_0$ and $DF(x_0)=F'(x_0)$. Moreover if $F':\mathcal{U}\to\mathcal{L}(\mathcal{X},\mathcal{Y})$ is a continuous operator, then $DF= F'$ and $F$ is $C^1$.
\end{prop}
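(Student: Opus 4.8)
The plan is to establish the classical fact that a continuous G\^ateaux derivative is automatically a Fr\'echet derivative, by reducing to the one–dimensional mean value inequality along segments. Fix $x_0\in\mathcal V$ and, using that $\mathcal V$ is open, pick $\rho>0$ with $B(x_0,\rho)\subset\mathcal V$; then for every $h\in\mathcal X$ with $\|h\|_{\mathcal X}<\rho$ the whole segment $\{x_0+th:t\in[0,1]\}$ stays in $\mathcal V$, which is what makes the argument below legitimate.

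First I would introduce the auxiliary curve $g:[0,1]\to\mathcal Y$, $g(t):=F(x_0+th)-t\,F'(x_0)(h)$, and observe that, straight from Definition \ref{gat}, $g$ is differentiable with $g'(t)=\bigl(F'(x_0+th)-F'(x_0)\bigr)(h)$: indeed the difference quotient of $t\mapsto F(x_0+th)$ is exactly the quotient defining the G\^ateaux derivative of $F$ at the point $x_0+th$ in the direction $h$, while the term $t\,F'(x_0)(h)$ is affine in $t$. Then I would apply the mean value inequality for Banach-space-valued maps, $\|g(1)-g(0)\|_{\mathcal Y}\le\sup_{t\in[0,1]}\|g'(t)\|_{\mathcal Y}$, which follows by choosing via Hahn--Banach a norm-one $\varphi\in\mathcal Y'$ with $\varphi(g(1)-g(0))=\|g(1)-g(0)\|_{\mathcal Y}$ and applying the scalar mean value theorem to $\varphi\circ g$. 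Since $g(1)-g(0)=F(x_0+h)-F(x_0)-F'(x_0)(h)$ and $\|g'(t)\|_{\mathcal Y}\le\|F'(x_0+th)-F'(x_0)\|_{\mathcal L(\mathcal X,\mathcal Y)}\|h\|_{\mathcal X}$, this yields
$$
\|F(x_0+h)-F(x_0)-F'(x_0)(h)\|_{\mathcal Y}\le\Bigl(\sup_{t\in[0,1]}\|F'(x_0+th)-F'(x_0)\|_{\mathcal L(\mathcal X,\mathcal Y)}\Bigr)\|h\|_{\mathcal X}.
$$

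To conclude the first assertion, I would invoke continuity of $F'$ at $x_0$: given $\eps>0$ there is $\delta\in(0,\rho)$ such that $\|x-x_0\|_{\mathcal X}<\delta$ forces $\|F'(x)-F'(x_0)\|_{\mathcal L(\mathcal X,\mathcal Y)}<\eps$; then for $\|h\|_{\mathcal X}<\delta$ every point $x_0+th$, $t\in[0,1]$, lies within $\delta$ of $x_0$, so the right-hand side above is $\le\eps\|h\|_{\mathcal X}$, which by Definition \ref{fre} is precisely Fr\'echet differentiability at $x_0$ with $DF(x_0)=F'(x_0)$. For the ``moreover'' part, if $F'$ is continuous throughout $\mathcal U$ the above applies at every point, giving $DF=F'$ on $\mathcal U$; since this map is then continuous by hypothesis, $F$ is $C^1$ by Definition \ref{c1}.

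The only step needing a bit of care is the mean value inequality in the Banach-space setting, since there is no exact mean value \emph{equality} when $\dim\mathcal Y>1$; the Hahn--Banach reduction to a scalar function of $t\in[0,1]$ disposes of it. Everything else is a direct unwinding of the definitions of G\^ateaux and Fr\'echet derivative.
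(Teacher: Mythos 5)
Your proof is correct. The paper does not actually prove Proposition \ref{equiv} --- it simply cites Zeidler, p.~274 --- and the argument you give (restrict to the segment $t\mapsto F(x_0+th)-tF'(x_0)(h)$, identify its derivative via the G\^ateaux quotient at $x_0+th$, and apply the Banach-space mean value inequality obtained by a Hahn--Banach reduction to a scalar function) is precisely the standard proof found in that reference. All the delicate points are handled: the segment stays in $\mathcal V$ for $\|h\|_{\mathcal X}<\rho$, continuity of $g$ on $[0,1]$ follows from the existence of the directional difference-quotient limits, and only continuity of $F'$ \emph{at} $x_0$ is used in the final estimate, exactly as the hypothesis allows.
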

\begin{proof}
  See \cite{Zeidler1995}, p. 274.
\end{proof}
We denote by $\mathcal{X}'$ the dual space of $\mathcal{X}$ and by $\langle\cdot,\cdot\rangle:\mathcal{X}'\times\mathcal{X}\to\R$ the duality pairing defined as $\langle S ,t \rangle=S(t)$, for every $t\in\mathcal{X}, S\in\mathcal{X}'$.
\begin{prop}[Existence of a Lagrange multiplier: \cite{Zeidler1995}, p. 270]
\label{lag}
Let $f:\mathcal{U}\subset\mathcal{X}\to\R$ and $G:\mathcal{U}\subset\mathcal{X}\to\mathcal{Y}$ be $C^1$ on an open neighborhood $\mathcal U$ of $\tilde{x}$. Suppose that $\tilde{x}$ is an extremum of $f$ on the set $\{x\in\mathcal{U} : G(x)=0\}$ and that
$$
DG(\tilde{x}):\mathcal{X}\to\mathcal{Y} \qquad \text{is a surjective linear operator}.
$$
Then there exists $\lambda\in\mathcal{Y}'$, a \emph{Lagrange multiplier}, such that
$
Df(\tilde{x})-\langle\lambda,DG(\tilde{x})\rangle=0.
$
\end{prop}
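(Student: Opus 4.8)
The plan is to follow the classical Lyusternik multiplier rule. The heart of the matter is to show that the bounded linear functional $Df(\tilde x)\in\mathcal X'$ annihilates the closed subspace $N:=\ker DG(\tilde x)$; granted this, the multiplier is produced by a duality argument.

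First I would carry out the duality step, which is the easy direction. Since $DG(\tilde x)\in\mathcal L(\mathcal X,\mathcal Y)$ is surjective, it has closed range, so the closed-range theorem guarantees that its adjoint $DG(\tilde x)^{*}\colon\mathcal Y'\to\mathcal X'$ is injective with range exactly the annihilator $N^{\perp}=\{\varphi\in\mathcal X':\varphi|_{N}=0\}$. Consequently, once we know $Df(\tilde x)\in N^{\perp}$, there is $\lambda\in\mathcal Y'$ with $DG(\tilde x)^{*}\lambda=Df(\tilde x)$, that is $\langle\lambda,DG(\tilde x)h\rangle=Df(\tilde x)(h)$ for every $h\in\mathcal X$, which is precisely the asserted relation $Df(\tilde x)-\langle\lambda,DG(\tilde x)\rangle=0$.

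The substantive step is therefore to prove $Df(\tilde x)|_{N}=0$. Fix $h\in N$; I would construct a curve $t\mapsto x(t)$, defined for $|t|$ small, with $x(0)=\tilde x$, $G(x(t))=0$ for all such $t$, and $x$ differentiable at $0$ with $\dot x(0)=h$. Granting this curve, $t\mapsto f(x(t))$ is differentiable at $0$ (chain rule, using that $f$ is $C^1$) and has a local extremum there, whence $0=\frac{d}{dt}\big|_{0}f(x(t))=Df(\tilde x)(h)$. To build the curve I would use the implicit function theorem adapted to surjective differentials, i.e. the Lyusternik--Graves theorem: since $G$ is $C^{1}$ near $\tilde x$, $G(\tilde x)=0$, and $DG(\tilde x)$ is onto, one obtains a local right-inverse estimate producing a correction $r(t)$ with $\|r(t)\|\le C\|G(\tilde x+th)\|$ and $G(\tilde x+th+r(t))=0$; since $DG(\tilde x)h=0$ we have $G(\tilde x+th)=G(\tilde x)+t\,DG(\tilde x)h+o(t)=o(t)$, so $r(t)=o(t)$ and $x(t):=\tilde x+th+r(t)$ has all the required properties. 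In the Hilbert-space situation relevant to the applications in this paper ($\mathcal X=\mathcal H$, $\mathcal Y=(\hl^{n})'$) this simplifies: $N$ is complemented, $DG(\tilde x)$ restricts to a Banach-space isomorphism $N^{\perp}\to\mathcal Y$, and the ordinary implicit function theorem applied to $(u,w)\mapsto G(\tilde x+u+w)$ on $N\times N^{\perp}$ yields directly a $C^1$ map $u\mapsto w(u)$ with $w(0)=0$, $Dw(0)=0$, $G(\tilde x+u+w(u))=0$, so that $x(t)=\tilde x+th+w(th)$ works.

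I expect the main obstacle to be exactly this construction of the constrained curve in the general Banach setting: when $N=\ker DG(\tilde x)$ fails to be complemented one cannot split $\mathcal X$ and invoke the textbook implicit function theorem, and must instead establish the metric-regularity estimate $\operatorname{dist}(x,\{G=0\})\le C\|G(x)\|$ near $\tilde x$ --- proved by a contraction iteration that exploits surjectivity of $DG(\tilde x)$ together with the continuity of $DG$ --- which is the analytic core of the Lyusternik--Graves theorem. The remaining ingredients (the closed-range reduction, the differentiability of $t\mapsto f(x(t))$, and reading off stationarity) are routine.
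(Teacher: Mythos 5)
Your argument is correct. The paper does not prove this proposition at all --- it is quoted verbatim from Zeidler's book with only the citation ``\cite{Zeidler1995}, p.~270'' as proof --- and what you have written is precisely the standard Lyusternik-type argument underlying that reference: the closed-range theorem reduces everything to showing $Df(\tilde x)$ annihilates $\ker DG(\tilde x)$, and the tangent-curve construction via metric regularity (or, in the complemented/Hilbert case relevant to the paper's application with $\mathcal X=\mathcal H$ and $\mathcal Y=(\hl^n)'$, via the ordinary implicit function theorem) supplies that. You have also correctly isolated the one nontrivial external ingredient, the Lyusternik--Graves estimate, so there is no gap to report.
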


\begin{prop}[{Contraction Theorem \cite[p.18]{Zeidler1995a}}]
\label{contr}
Let $T:\mathcal{X}\to\mathcal{X}$. If $L\in(0,1)$ exists such that
$$
\|T(x)-T(y)\|_{\mathcal{X}}\leq L\|x-y\|_{\mathcal{X}} \qquad\ch{\mbox{for all }}\ x,y\in\mathcal{X},
$$
then $T$ admits a unique fixed point $x^*\in\mathcal{X}$ (i.e. $T(x^*)=x^*$).
\end{prop}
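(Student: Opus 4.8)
The plan is to run the classical Picard iteration underlying Banach's fixed-point theorem, using that $\mathcal{X}$ is complete (it is a Banach space, as stipulated at the start of the Appendix). First I would pick an arbitrary $x_0\in\mathcal{X}$ and define recursively $x_{k+1}=T(x_k)$ for $k\ge 0$. Applying the contraction estimate repeatedly and arguing by induction on $k$, one obtains $\|x_{k+1}-x_k\|_{\mathcal{X}}\le L^k\|x_1-x_0\|_{\mathcal{X}}$ for every $k\ge 0$.

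The next step is to show that $\{x_k\}$ is a Cauchy sequence. For $m>k$, the triangle inequality together with the previous bound gives
\[
\|x_m-x_k\|_{\mathcal{X}}\le\sum_{j=k}^{m-1}\|x_{j+1}-x_j\|_{\mathcal{X}}\le\Big(\sum_{j=k}^{\infty}L^j\Big)\|x_1-x_0\|_{\mathcal{X}}=\frac{L^k}{1-L}\|x_1-x_0\|_{\mathcal{X}},
\]
which tends to $0$ as $k\to\infty$ because $L\in(0,1)$. Hence, by completeness of $\mathcal{X}$, the sequence converges to some $x^*\in\mathcal{X}$. Since a contraction is in particular Lipschitz continuous, passing to the limit $k\to\infty$ in the relation $x_{k+1}=T(x_k)$ yields $x^*=T(x^*)$, so a fixed point exists.

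For uniqueness I would suppose that $x^*$ and $y^*$ are both fixed points of $T$. Then the contraction inequality gives $\|x^*-y^*\|_{\mathcal{X}}=\|T(x^*)-T(y^*)\|_{\mathcal{X}}\le L\|x^*-y^*\|_{\mathcal{X}}$, whence $(1-L)\|x^*-y^*\|_{\mathcal{X}}\le 0$; since $L<1$ this forces $x^*=y^*$.

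I do not anticipate a genuine obstacle here: the only point that requires a little care is verifying that the iterates form a Cauchy sequence, which rests on the geometric-series estimate above and on the completeness of $\mathcal{X}$ (available since $\mathcal{X}$ is Banach). As the statement is classical, one may alternatively simply cite \cite{Zeidler1995a} directly, which is the route taken here.
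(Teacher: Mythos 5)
Your Picard-iteration argument is the standard, complete proof of Banach's contraction theorem and is correct in every step (the geometric-series Cauchy estimate, passage to the limit via Lipschitz continuity, and the uniqueness argument). The paper itself gives no proof and simply cites \cite{Zeidler1995a}, so your write-up is, if anything, more detailed than what the authors provide; either route is acceptable.
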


\ch{
\subsection*{Surjectivity of Sturm-Liouville operators}

We prove a result on surjectivity of Sturm-Liouville operators onto dual spaces, which is crucial in the proof of Theorem \ref{lagmul2} and for which we could not find a reference. For $\aut \in \R$, let $a_\aut: H^{1}_{0L}(I)\times H^{1}_{0L}(I)$ be the bilinear symmetric form defined by
\begin{equation}
\label{defa}
a_\aut(u,v)=\int_0^1 u'v' + \int_0^1 quv - \aut\int_0^1 ruv,
\end{equation}
where $q,r:I\to \R$ with $r>0$ in $I$ and $q,r\in L^1(I)$. Let $L_\aut$ be the associated linear operator:
\begin{equation}
\label{defL}
L_\aut:H^{1}_{0L}(I)\to H^{1}_{0L}(I)', \qquad L_\aut(u)(v) =a_\aut(u,v) \qquad \forall v\in H^1_{0L}(I).
\end{equation}
We note that $L_\aut(u)=0$ in $H^{1}_{0L}(I)'$ if and only if $u$ is a weak solution to
\begin{equation}
\label{eqSL}
\begin{cases}
-u''+qu=\aut ru & \mbox{in $(0,1),$} \\
u(0)=0, \ u'(1)=0.
\end{cases}
\end{equation}
We introduce the weighted scalar product $(u,v)_r= \int_0^1 ruv$, with corresponding norm $\|u\|^2_r=(u,u)_r$, and define $L^2(I,r):=\left\{u\in L^1(I) : \ \|u\|_r < +\infty\right\}$. Following the standard nomenclature in Sturm-Liouville theory, we say that:
\begin{defin}\label{def-eigen}
$\aut\in \R$ is an {\sl eigenvalue} of $L_\aut$ if there exists a nonzero function $\varphi\in H^1_{0L}(I)$, called {\sl eigenfunction}, such that $\varphi$ is a solution of $L_\aut(\varphi)=0$.
\end{defin}
The eigenvalues of $L_\aut$ are characterized as follows:
\begin{theorem}[Theorem 4.6.2 of \cite{Zettl}]
\label{baseSL}
Let $L_\aut$ be as in \eqref{defa}-\eqref{defL}. Then
\begin{itemize}
\item[$(i)$] all eigenvalues of $L_\aut$ are real and simple;
\item[$(ii)$] the eigenvalues of $L_\aut$  are an infinite and countable set, $\{\aut_k : k\in\N\}$, bounded from below:
$-\infty < \aut_1 < \aut_2 < \aut_3 < \dots$ and $\aut_k \to \infty$ as $k \to \infty$;
\item[$(iii)$] the sequence of eigenfunction $\{\varphi_k\}$ is complete in $L^2(I,r)$ and can be normalized to be an orthonormal sequence in $L^2(I,r)$.
\end{itemize}
\end{theorem}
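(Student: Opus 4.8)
The plan is to reduce the eigenvalue problem for $L_\aut$ to the spectral theorem for a compact, positive, self-adjoint operator, and then to upgrade ``finite multiplicity'' to \emph{simplicity} using the uniqueness theory for second-order linear ODEs. First I would fix $\sigma\ge 0$ so large that the shifted bilinear form $b(u,v):=\io u'v'+\io quv+\sigma\io ruv$ is coercive on $\hl$, i.e. $b(u,u)\ge c\io(u')^2$ for some $c>0$. In all the applications of this theorem one has $q=r_i^-+1\ge 1$, $r=r_i^++1\ge 1$ with $r_i\in L^\infty(I)$, so that $b=a_0$ already is coercive by \eqref{def-S} and $\sigma=0$ works; in the general $L^1$ case one splits the negative part of $q$ as $q^-=g_1+g_2$ with $g_1\in L^\infty(I)$ and $\|g_2\|_{L^1}$ as small as one needs, and combines \eqref{def-S} with \eqref{def-c} (here one also uses that the essential infimum of $r$ on $I$ is positive). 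By Lax--Milgram, for every $f\in L^2(I,r)$ there is a unique $Sf\in\hl$ with $b(Sf,v)=(f,v)_r$ for all $v\in\hl$, and one checks that a nonzero $u$ weakly solves \eqref{eqSL} with $\aut=t^{-1}-\sigma$ (for some $t>0$) precisely when $Su=tu$. Composing $S$ with the embedding $\hl\hookrightarrow L^2(I,r)$ --- which is compact because $\hl\hookrightarrow C([0,1])$ is compact and $\|u\|_r^2\le\|r\|_{L^1}\|u\|_\infty^2$ --- yields a compact operator $S$ on $L^2(I,r)$, which is self-adjoint since $(Sf,g)_r=b(Sf,Sg)=(f,Sg)_r$, positive since $(Sf,f)_r=b(Sf,Sf)\ge 0$, and injective since $Sf=0$ forces $(f,v)_r=0$ for all $v$ in the dense subspace $\hl\subset L^2(I,r)$, whence $f=0$ because $r>0$.

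Next I would invoke the spectral theorem for compact self-adjoint operators: $S$ has a nonincreasing null sequence of eigenvalues $t_1\ge t_2\ge\cdots>0$ whose eigenfunctions $\{\varphi_k\}\subset\hl$ form a complete orthonormal system of $L^2(I,r)$, and this sequence is infinite because $S$ is injective on the infinite-dimensional space $L^2(I,r)$. Setting $\aut_k:=t_k^{-1}-\sigma$, the equivalence of the previous paragraph identifies the $\aut_k$ with the eigenvalues of $L_\aut$; this already gives that the eigenvalues form an infinite countable set, that $\aut_k\to+\infty$, that they are bounded below by $\aut_1=\|S\|^{-1}-\sigma$, and --- after normalizing the $\varphi_k$ in $L^2(I,r)$ --- statement $(iii)$. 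Reality of the eigenvalues follows from self-adjointness of $S$ (or, directly, by testing $L_\aut\varphi=0$ with $\bar\varphi$ and using $r>0$), which is part of $(i)$.

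It then remains to promote finite multiplicity to simplicity; this is where the ODE structure enters. Any eigenfunction $\varphi$ solves the Carath\'eodory equation $-\varphi''+q\varphi=\aut r\varphi$ a.e.\ on $(0,1)$, with $\varphi,\varphi'$ absolutely continuous since $q,r\in L^1(I)$. Given two eigenfunctions $\varphi,\psi$ associated with the same $\aut$, their Wronskian $W=\varphi'\psi-\varphi\psi'$ satisfies $W'=\varphi''\psi-\varphi\psi''=(q-\aut r)\varphi\psi-(q-\aut r)\varphi\psi=0$ a.e., so $W$ is constant; since both eigenfunctions vanish at $0$, $W(0)=0$, hence $W\equiv 0$ and $\varphi,\psi$ are proportional. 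Thus every eigenvalue is simple, which forces the strict inequalities $t_1>t_2>\cdots$, equivalently $\aut_1<\aut_2<\cdots$, completing $(i)$ and $(ii)$.

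The genuinely delicate point --- and the only non-routine one besides this simplicity argument --- is the coercivity of the shifted form $b$ under the bare assumptions $q,r\in L^1(I)$ and $r>0$, in particular controlling a possible degeneracy of $r$ near the endpoints of $I$. This is exactly the part treated in the general Sturm--Liouville theory of \cite[Theorem 4.6.2]{Zettl}, from which the fully general statement is quoted, whereas in all our applications (bounded $q,r$ bounded below by $1$) it is immediate.
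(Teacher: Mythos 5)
The paper offers no proof of this statement: Theorem \ref{baseSL} is quoted verbatim as Theorem 4.6.2 of \cite{Zettl}, so your argument is not competing with an in-text proof but with a citation. What you supply is the standard functional-analytic derivation --- shift the form, invert by Lax--Milgram, observe that the resolvent is compact, self-adjoint, positive and injective on $L^2(I,r)$, apply the spectral theorem, and then upgrade finite multiplicity to simplicity via the Wronskian (or, equivalently, the one-dimensionality of the solution space of the Carath\'eodory initial-value problem with $u(0)=0$) --- and it is correct in the setting the paper actually uses, where $q=\tilde r^-+\delta$ and $r=\tilde r^++\delta$ are bounded and bounded below by $\delta>0$, so that $b=a_0$ is coercive, $L^2(I,r)=L^2(I)$, and $\hl$ is dense there. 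You also correctly identify, and honestly defer to \cite{Zettl}, the only point where the bare hypotheses $q,r\in L^1(I)$, $r>0$ do not immediately yield your scheme: coercivity of the shifted form (and, one should add, density of $\hl$ in $L^2(I,r)$) when $r$ may degenerate; your parenthetical appeal to $\operatorname{ess\,inf}r>0$ is an extra assumption not contained in the theorem as stated, which is precisely why the general case is left to the reference. Two micro-points worth a line if you write this up: the map $\aut\mapsto t=(\aut+\sigma)^{-1}$ requires noting that every eigenvalue satisfies $\aut+\sigma>0$, which follows from $b(u,u)=(\aut+\sigma)\|u\|_r^2$ and coercivity; and in the Wronskian step one should record that a weak eigenfunction has $u'\in W^{1,1}(I)$ (since $-u''=(\aut r-q)u\in L^1(I)$), so the pointwise ODE and Carath\'eodory uniqueness legitimately apply --- which you essentially do. In short: correct, self-contained where the paper merely cites, with the genuinely delicate general-weight case appropriately delegated to the quoted source.
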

\begin{remark}
\label{baseSL1}
It is readily checked that:
\begin{itemize}
\item[$(i)$] If $r\in L^\infty(I)$ and $\inf r>0$, then $L^2(I,r)=L^2(I)$;

\item[$(ii)$] If $q\geq 0$, then $\aut_k> 0$ for every $k\in\N$.
\end{itemize}
\end{remark}

\begin{lemma}
\label{char}
Let $L_\aut$ be as in \eqref{defa}-\eqref{defL} and let $q\geq 0$ and $r\in L^\infty(I)$ be such that $\inf r>0$. Then for every $k\in\N$
\begin{equation}\label{char-eq}
\aut_k = \min\left\{J(v):=\int_0^1 ((v')^2 + qv^2) \ : \ v\in E^\perp_{k-1}, \ \|v\|_r=1\right\},
\end{equation}
where $E^\perp_0=H^1_{0L}(I)$ and $E^\perp_{k}=\left\{ v\in H^1_{0L}(I) : (\varphi_i ,v)_r =0 \quad \forall i=1,\dots,k\right\}$ for $k\geq 1$.
\end{lemma}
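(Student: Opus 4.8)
The plan is to identify $J$ with the squared norm of the ``energy'' inner product $a_0(\cdot,\cdot)$ on $\hl$ (the form \eqref{defa} with $\aut=0$), to show that the normalized eigenfunctions of $L_\aut$ form a complete orthogonal system for this inner product, and then to read off \eqref{char-eq} as a Parseval-type inequality. First I unwind the hypotheses: since $r\in L^\infty(I)$ and $\inf r>0$, Remark \ref{baseSL1} gives $L^2(I,r)=L^2(I)$, so that the orthonormal family $\{\varphi_j\}$ of Theorem \ref{baseSL}$(iii)$ is an orthonormal basis of $L^2(I)$; since $q\ge 0$, the same remark gives $\aut_j>0$ for all $j\in\N$.

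Next, $J(v)=a_0(v,v)$, and the chain
\[
\|v\|^2\le a_0(v,v)=\io (v')^2+\io qv^2\le \|v\|^2+\|q\|_1\|v\|_\infty^2\stackrel{\eqref{def-S}}\le (1+\|q\|_1)\|v\|^2
\]
shows that $a_0$ is an inner product on $\hl$ whose norm is equivalent to $\|\cdot\|$; in particular $(\hl,a_0)$ is a Hilbert space. From $L_{\aut_j}(\varphi_j)=0$, i.e. $a_{\aut_j}(\varphi_j,w)=0$ for every $w\in\hl$, I get $a_0(\varphi_j,w)=\aut_j(\varphi_j,w)_r$ for every $w\in\hl$; testing with $w=\varphi_i$ and using orthonormality in $L^2(I,r)$ yields $a_0(\varphi_i,\varphi_j)=\aut_j\delta_{ij}$, so $\{\varphi_j/\sqrt{\aut_j}\}$ is an orthonormal system in $(\hl,a_0)$. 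It is complete there: if $w\in\hl$ is $a_0$-orthogonal to every $\varphi_j$, then $\aut_j(w,\varphi_j)_r=0$ forces $(w,\varphi_j)_r=0$ for all $j$ (as $\aut_j>0$), hence $w=0$ by completeness of $\{\varphi_j\}$ in $L^2(I,r)$.

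Consequently every $v\in\hl$ admits the expansion $v=\sum_j c_j\varphi_j$ with $c_j=(v,\varphi_j)_r$, the series converging in $(\hl,a_0)$ (hence in $\hl$ and in $L^2(I)$), and Parseval gives $J(v)=a_0(v,v)=\sum_j\aut_j c_j^2$ together with $\|v\|_r^2=\sum_j c_j^2$. For $v\in E^\perp_{k-1}$ with $\|v\|_r=1$ one has $c_j=0$ for $j\le k-1$, so $\sum_{j\ge k}c_j^2=1$, and since $\aut_j\ge\aut_k$ for $j\ge k$ by Theorem \ref{baseSL}$(ii)$,
\[
J(v)=\sum_{j\ge k}\aut_j c_j^2\ \ge\ \aut_k\sum_{j\ge k}c_j^2=\aut_k ;
\]
equality is attained at $v=\varphi_k\in E^\perp_{k-1}$, which has $\|\varphi_k\|_r=1$ and $J(\varphi_k)=a_0(\varphi_k,\varphi_k)=\aut_k$. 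Thus the infimum in \eqref{char-eq} equals $\aut_k$ and is a minimum.

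The step I expect to be most delicate is the completeness/Parseval argument, i.e. showing that the eigenfunctions form a Hilbert basis of $\hl$ for the energy inner product and not merely an $L^2(I,r)$-basis: this is precisely where the assumptions $q\ge 0$ and $q\in L^1(I)$ are used, through the equivalence of $a_0(\cdot,\cdot)^{1/2}$ with $\|\cdot\|$ (via the embedding $\hl\hookrightarrow C([0,1])$ and \eqref{def-S}). Once this equivalence is secured, the rest is bookkeeping with Fourier coefficients; alternatively one could prove attainment of the minimum by a direct method using weak lower semicontinuity of $J$ and compactness of $\hl\hookrightarrow C([0,1])$ and then identify the value via a Lagrange-multiplier argument, but the expansion route yields both at once.
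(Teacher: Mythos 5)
Your argument is correct, but it takes a genuinely different route from the paper's. The paper argues variationally: it sets $m_k=\inf\{J(v):v\in E_{k-1}^\perp,\ \|v\|_r=1\}$, produces a minimizer $u_k$ by the direct method (weak lower semicontinuity and coercivity of $J$, weak closedness of the constraint set), and then observes that the Euler--Lagrange identity \eqref{rabb12}, a priori tested only against $E_{k-1}^\perp$, extends to all of $\hl$ because $\io u_k'\varphi_i'+\io qu_k\varphi_i=\aut_i(u_k,\varphi_i)_r=0$ for $i\le k-1$; hence $m_k$ is an eigenvalue whose eigenfunction lies in $E_{k-1}^\perp$, so $m_k\ge\aut_k$, while $m_k\le J(\varphi_k)=\aut_k$. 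You instead recognize $J(v)=a_0(v,v)$ as the square of an energy norm equivalent to $\|\cdot\|$ (this is exactly where $q\ge0$, $q\in L^1(I)$ and \eqref{def-S} enter), show that $\{\varphi_j/\sqrt{\aut_j}\}$ is a complete orthonormal system of $(\hl,a_0)$ via the identity $a_0(\varphi_j,w)=\aut_j(\varphi_j,w)_r$ and the positivity of the $\aut_j$, and then read off \eqref{char-eq} from Parseval. Your route delivers the lower bound $J(v)\ge\aut_k$ for every admissible $v$ in one stroke and yields attainment for free at $\varphi_k$, bypassing both the existence step and the (implicit) appeal to simplicity of the eigenvalues in the paper's deduction that $m_k\ge\aut_k$; the price is the preliminary work of setting up the energy inner product and proving completeness of the eigenfunctions with respect to it, which you carry out correctly. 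Both proofs are sound; yours is the classical Fourier-expansion proof of the Rayleigh-quotient characterization, the paper's is the variational one.
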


\begin{proof}
Let $m_k=\inf \left\{J(v): \ v\in E_{k-1}^\perp , \  \|v\|_r=1 \right\}$. Since $J$ is weakly lower semicontinuous and coercive and $\left\{v\in E_{k-1}^\perp : \|v\|_r=1 \right\}$ is weakly closed in $H^1_{0L}(I)$, there exists a minimizer $u_k\in  E_{k-1}^\perp$, $m_k=J(u_k)$, and $\|u_k\|_r=1$. Furthermore, $u_k$ solves
\begin{equation}
\label{rabb12}
\int_0^1 u_k' v' +\int_0^1 qu_k v = m_k\int_0^1 r u_k v \qquad \forall v\in E_{k-1}^\perp.
\end{equation}
We now prove that $m_k=\aut_k$. In view of Remark \ref{baseSL1} $(i)$, we can decompose any $v\in H^1_{0L}(I)$ as $v=\sum\limits_{i=1}^{k-1}v_i \varphi_i + w$ with $w\in E_{k-1}^\perp$.  Noting that
$$
\int_0^1 u_k' \varphi_i'+\int_0^1 qu_k\varphi_i=\aut_i \int_0^1 ru_k\varphi_i \stackrel{u_k\in E_{k-1}^\perp}= 0 \quad\mbox{for $i=1,\dots,k-1$},
$$
we deduce that \eqref{rabb12} holds for any $v\in H^1_{0L}(I)$. This implies that $m_k$ is an eigenvalue, and, since $u_k\in E_{k-1}^\perp$, $m_k\geq \aut_k$. On the other hand $m_k=J(u_k)\leq J(\varphi_k)=\aut_k$, hence we conclude that $m_k=\aut_k$.
\end{proof}

\begin{prop}
\label{surSL}
Let $L_\aut$ be as in \eqref{defa}-\eqref{defL}. Let $q\geq 0$ and $r\in L^\infty(I)$ be such that $\inf r>0$. If $\aut$ is not an eigenvalue of $L_\aut$, then $L_\aut$ is surjective.
\end{prop}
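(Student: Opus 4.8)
The plan is to reduce surjectivity of $L_\aut$ to the Fredholm alternative for a compact perturbation of the identity on $H^{1}_{0L}(I)$. First I would isolate the ``good'' part of the form: set $a_0(u,v)=\int_0^1 u'v'+\int_0^1 quv$ and let $L_0\colon H^{1}_{0L}(I)\to H^{1}_{0L}(I)'$ be the associated operator, $L_0(u)(v)=a_0(u,v)$, so that $L_\aut=L_0-\aut K$ where $K(u)(v)=\int_0^1 ruv=(u,v)_r$. Since $q\ge 0$ one has $a_0(u,u)\ge \int_0^1 (u')^2=\|u\|^2$, while $H^{1}_{0L}(I)\subset C([0,1])$ together with \eqref{def-S} gives $|a_0(u,v)|\le (1+\|q\|_1)\|u\|\,\|v\|$; hence $a_0$ is a bounded, coercive, symmetric bilinear form and, by Lax--Milgram (indeed, by Riesz, as $a_0$ is symmetric), $L_0$ is an isomorphism of $H^{1}_{0L}(I)$ onto its dual.

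The crucial step is that $K$ is compact. Writing $\iota$ for the embedding $H^{1}_{0L}(I)\hookrightarrow L^2(I)$, which is compact by Rellich, and $M_r$ for multiplication by $r\in L^\infty(I)$ on $L^2(I)$, one has $K=\iota^*M_r\iota$; here the assumptions $r\in L^\infty(I)$ and $\inf r>0$ ensure $L^2(I,r)=L^2(I)$ (Remark \ref{baseSL1}), making this factorization consistent with the rest of the section. Since $\iota$ is compact and $\iota^*,M_r$ are bounded, $K$ is compact, and therefore $T:=\aut\,L_0^{-1}K\colon H^{1}_{0L}(I)\to H^{1}_{0L}(I)$ is compact as well.

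Then I would apply the Riesz--Schauder theory to $I-T$. For $F\in H^{1}_{0L}(I)'$, the equation $L_\aut u=F$ is equivalent, after applying $L_0^{-1}$, to $(I-T)u=L_0^{-1}F$. By the Fredholm alternative, $I-T$ is onto iff it is injective; and $(I-T)u=0$ means $L_0u=\aut Ku$, i.e. $a_\aut(u,v)=0$ for all $v\in H^{1}_{0L}(I)$, i.e. $L_\aut u=0$. If $u\ne 0$ this would make $u$ an eigenfunction and $\aut$ an eigenvalue of $L_\aut$, against the hypothesis; hence $u=0$, so $I-T$ is bijective and $u=(I-T)^{-1}L_0^{-1}F$ satisfies $L_\aut u=L_0(I-T)u=F$. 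This proves surjectivity.

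As for the main obstacle: there is no deep difficulty, the argument being a packaging of standard facts, but the one point requiring care is the compactness of $K$ (equivalently of $L_0^{-1}K$), which rests precisely on the compact embedding $H^{1}_{0L}(I)\hookrightarrow L^2(I)$ and on $r$ being bounded; a secondary, bookkeeping-type subtlety is that $q$ is only in $L^1(I)$, so one must invoke $H^{1}_{0L}(I)\subset C([0,1])$ to give meaning to $\int_0^1 qu^2$ and to bound $a_0$, whereas coercivity uses only $q\ge 0$. One could alternatively expand $F$ along the eigenbasis $\{\varphi_k\}$ of Theorem \ref{baseSL} and Lemma \ref{char}, but controlling the $H^{1}_{0L}(I)$-convergence of the resulting series is more delicate than the Fredholm route, so I would prefer the argument above.
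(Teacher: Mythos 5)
Your proof is correct, but it takes a genuinely different route from the paper. You decompose $L_\aut=L_0-\aut K$, observe that $L_0$ is an isomorphism by Lax--Milgram (coercivity from $q\ge 0$, boundedness via the embedding $\hl\subset C([0,1])$ to handle $q\in L^1(I)$) and that $K$ is compact thanks to the compact embedding $\hl\hookrightarrow L^2(I)$, and then invoke the Riesz--Schauder/Fredholm alternative for $I-\aut L_0^{-1}K$: injectivity is exactly the hypothesis that $\aut$ is not an eigenvalue, and bijectivity follows. The paper instead works directly with the Sturm--Liouville eigenbasis: it splits $\hl$ as $\mathrm{span}\{\varphi_1,\dots,\varphi_k\}\oplus E_k^\perp$ when $\aut_k<\aut<\aut_{k+1}$, reads off the coefficients $u_i=\langle T,\varphi_i\rangle/(\aut_i-\aut)$ explicitly, and applies Lax--Milgram on $E_k^\perp$, where coercivity of $a_\aut$ with constant $1-\aut/\aut_{k+1}$ follows from the variational characterization of $\aut_{k+1}$ in Lemma \ref{char}. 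Your argument is shorter and more robust: it bypasses Lemma \ref{char} and the case analysis entirely, and it does not even use the discreteness or ordering of the spectrum from Theorem \ref{baseSL} --- only the weak-form definition of eigenvalue --- at the price of importing Rellich compactness and Fredholm theory, which the paper does not otherwise use. The paper's argument stays within the Sturm--Liouville machinery it has already set up and yields quantitative by-products (an explicit solution formula on the eigenspace component and an explicit coercivity constant) that the abstract Fredholm route does not provide. Either proof is acceptable; the only point to keep straight in yours is that the equivalence $(I-T)u=0\iff L_\aut(u)=0$ is precisely Definition \ref{def-eigen}, which you state correctly.
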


\begin{proof}
Our goal is to prove that for every $T\in H^1_{0L}(I)'$ there exists $u\in H^{1}_{0L}(I)$ such that $L_\aut(u)(v)=\langle T, v\rangle$ for all $v\in H^{1}_{0L}(I)$.
Since $\aut\not\in\{\aut_k\}$, we may distinguish two cases: either $\aut < \aut_1$ $(a)$, or $\aut_k<\aut<\aut_{k+1}$ for some $k\ge 1$ $(b)$.
We first analyze case $(b)$.

\medskip

{\it (B). Proof under assumption $(b)$.}
Thanks to Theorem \ref{baseSL} $(iii)$ and Remark \ref{baseSL1} $(i)$, if $u,v\in H^1_{0L}(I)$ then there exist coefficients $u_1,\dots,u_k$ and $v_1,\dots,v_k$ such that
\begin{equation}
\label{rabb0}
u=\sum_{i=1}^k u_i\varphi_i + \overline u \quad \text{and}\quad v= \sum_{i=1}^k v_i\varphi_i + \overline v,
\end{equation}
where $\overline{u},\overline{v}\in E^{\perp}_k$. Using the bilinearity of $a$, we have
\begin{equation}
\label{rabb1}
L_\aut(u)(v) \stackrel{\eqref{rabb0}}=\sum_{i,j=1}^{k}u_i v_j L_\aut(\varphi_i)(\varphi_j) + \sum_{i=1}^{k}u_i L_\aut(\varphi_i)(\overline v) + \sum_{j=1}^{k} v_j L_\aut(\overline u)(\varphi_j) + L_\aut(\overline u)(\overline v).
\end{equation}
Since $\varphi_i$ is an eigenfunction, we deduce that
$$
L_\aut(\varphi_i)(\overline v) \stackrel{\eqref{defL}} = \int_0^1 \varphi_i'\overline v' + \int_0^1 q\varphi_i \overline v - \aut\int_0^1 r\varphi_i \overline v
 =  (\aut_i-\aut)\int_0^1 r\varphi_i\overline v \ \stackrel{\overline v \in E^\perp_k}= \ 0 
$$
for $i=1,\dots,k$. Analogously, $L_\aut(\overline u)(\varphi_j)=0$ for $j=1,\dots,k$. Hence \eqref{rabb1} turns into
\begin{equation}
\label{rabb2}
L_\aut(u)(v)=\sum_{i,j=1}^{k}u_i v_j L_\aut(\varphi_i)(\varphi_j) + L_\aut(\overline u)(\overline v).
\end{equation}
By definition of $L_\aut$ and using first that $\varphi_i$ is an eigenfunction with eigenvalue $\aut_i$ and then Theorem \ref{baseSL} $(iii)$, we have
\begin{eqnarray}
\label{rabb3}
L_\aut(\varphi_i)(\varphi_j) &=& \int_0^1 \varphi_i'\varphi_j' + \int_0^1 q\varphi_i \varphi_j - \aut\int_0^1 r\varphi_i \varphi_j \nonumber \\
& = & (\aut_i-\aut)\int_0^1 r\varphi_i\varphi_j  =  \begin{cases} (\aut_i-\aut) & \text{if $i=j$} \\
                    0 & \text{otherwise}.
      \end{cases}
\end{eqnarray}
It follows from \eqref{rabb2} and \eqref{rabb3} that $L_\aut$ is surjective if and only for every $T\in H^1_{0L}(I)'$ there exist coefficients $u_1,\dots,u_k$ and $\overline u\in E_k^\perp$ such that
\begin{equation}
\label{goal2}
\sum_{i=1}^k u_iv_i(\aut_i-\aut) + L_\aut(\overline u)(\overline v) = \langle T, v\rangle \qquad \forall v\in H^{1}_{0L}(I).
\end{equation}
The coefficients $u_1,\dots,u_k$ are readily identified: choosing $v=\varphi_i$ in \eqref{goal2}, since $L_\aut(\overline u)(\varphi_i)=0$ and $\aut\notin\{\aut_i\}$ we deduce
\begin{equation}
\label{rabb6}
u_i=\frac{1}{\aut_i-\aut} \langle T, \varphi_i\rangle  \qquad\forall i=1,\dots,k.
\end{equation}
Assume for a moment that
\begin{equation}
\label{rabb10}
\exists \ \overline u\in E^\perp_k: \qquad L_\aut(\overline u)(\overline v) \stackrel{\eqref{defL}}= a_\mu(\overline u,\overline v) = \langle T, \overline v\rangle  \quad \forall \overline v\in E^\perp_k.
\end{equation}
Then $u$ defined as in \eqref{rabb0} satisfies \eqref{goal2}. Indeed, plugging \eqref{rabb6} and \eqref{rabb10} into the left-hand side of \eqref{goal2} we obtain
\begin{eqnarray*}
L_\aut(u)(v) & =  & \sum_{i=1}^k v_i\langle T, \varphi_i\rangle +  \langle T, \overline v\rangle  =  \langle T, \sum_{i=1}^k v_i\varphi_i + \overline v \rangle =  \langle T, v\rangle.
\end{eqnarray*}
Therefore it remains to prove \eqref{rabb10}.

\medskip

{\it Proof of \eqref{rabb10}.} We note that $E^\perp_k\subset H^1_{0L}(I)$ is a Hilbert space. Therefore $T\in H^1_{0L}(I)'\subset (E^\perp_k)'$. Hence \eqref{rabb10} follows from Lax-Milgram theorem, once we show that the bilinear form $a$, restricted to $E^\perp_k\times E^\perp_k$, is bounded and coercive. Applying H\"older inequality, \eqref{def-S}, and \eqref{def-c}, we have
\begin{eqnarray*}
|a(\overline u,\overline v)| &=& \left|\int_0^1 \overline u'\overline v' + \int_0^1 q\overline u \overline v - \aut\int_0^1 r\overline u \overline v \right|\\
& \leq &  \|\overline u\|\|\overline v\| + \|q\|_{L^{1}(I)}\|\overline u\|_{\infty}\|\overline v\|_{\infty} + \aut c_p^2\|r\|_{\infty} \|\overline u\| \|\overline v\| \le C \|\overline u\|\|\overline v\|,
\end{eqnarray*}
hence $a$ is bounded. In order to prove that $a$ is coercive, we recall that by $(ii)$ of Remark \ref{baseSL1}, $0<\mu_k<\mu<\mu_{k+1}$, and we estimate
\begin{eqnarray}
\nonumber
\lefteqn{a(\overline u,\overline u)  = \int_0^1 \left((\overline u')^2 + q\overline u^2 - \aut r\overline u^2\right)} \\
 & = & \left(1-\frac{\aut}{\aut_{k+1}}\right)\int_0^1 \left((\overline u')^2 + q\overline u^2\right)
 +  \frac{\aut}{\aut_{k+1}}\int_0^1 \left((\overline u')^2+ q\overline u^2 - \aut_{k+1} r\overline u^2\right).\label{rabb8}
\end{eqnarray}
Since $\overline u\in E^\perp_k$, \eqref{char-eq} implies that the second summand on the right-hand side of \eqref{rabb8} is nonnegative; hence
\begin{equation*}
a(\overline u,\overline u) \geq \left(1-\frac{\aut}{\aut_{k+1}}\right)\left(\int_0^1 (\overline u')^2 + \int_0^1 q\overline u^2\right) \stackrel{q\geq 0}\geq \left(1-\frac{\aut}{\aut_{k+1}}\right)\|\overline u\|_{E^\perp_k}^2.
\end{equation*}
Since $\aut<\aut_{k+1}$, $a$ is coercive.

\medskip

{\it (A). Proof under assumption $(a)$.} The result follows by arguing in the same way used in the proof of \eqref{rabb10} in $(B)$, choosing $k=0$ and recalling that $E_0^\perp=H^1_{0L}(I)$.
\end{proof}

In the body of this manuscript, we deal with linear operators $\tilde L: H^1_{0L}(I)\to H^1_{0L}(I)'$ defined as
\begin{equation}
\label{defL2}
\tilde L(u)(v)=\int_0^1 u'v' - \int_0^1 \tilde ru v \qquad \forall v\in H^1_{0L}(I),
\end{equation}
with $\tilde r\in L^\infty(I)$ which may not be positive. However, it turns out that Proposition \ref{surSL} may be applied. Indeed, we have $\tilde L=\tilde L_{1}$, where
\begin{equation}
\label{defL2bis}
\tilde L_{\aut}(u)(v) = \int_0^1 u'v' + \int_0^1 (\tilde r^-+\delta) uv - \aut\int_0^1 (\tilde r^++\delta)uv \qquad \forall v\in H^1_{0L}(I),
\end{equation}
with $r^-=\max(-r,0)$, $r^+=\max(r,0)$ and $\delta >0$ an arbitrary constant.

\begin{theorem}\label{teoSL}
Let $\tilde L$ as in \eqref{defL2} and let $\tilde r\in L^{\infty}(I)$.
\begin{itemize}
\item[$(i)$] If $1$ is not an eigenvalue of $\tilde L_{\aut}$ in the sense of Definition \ref{def-eigen}, then $\tilde L$ is surjective.
\item[$(ii)$] If $\|\tilde r^+\|_{\infty}<c_p^{-2}$, then $\tilde L$ is surjective.
\end{itemize}
\end{theorem}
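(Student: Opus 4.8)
The plan is to deduce Theorem \ref{teoSL} directly from Proposition \ref{surSL}, using the identity $\tilde L=\tilde L_1$ recorded around \eqref{defL2}--\eqref{defL2bis}. First I would fix $\delta>0$ and observe that the family $\{\tilde L_{\aut}\}$ from \eqref{defL2bis} is of the Sturm--Liouville form \eqref{defa}--\eqref{defL} with $q=\tilde r^-+\delta$ and $r=\tilde r^++\delta$. Since $\tilde r\in L^\infty(I)$ and $I$ is bounded, one has $q,r\in L^\infty(I)\subset L^1(I)$, $q\ge\delta>0$ (so in particular $q\ge 0$), and $\inf r\ge\delta>0$; hence the hypotheses of Proposition \ref{surSL} are satisfied, and $\tilde L_{\aut}$ is surjective whenever $\aut$ is not an eigenvalue of $\tilde L_{\aut}$ in the sense of Definition \ref{def-eigen}.

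Part $(i)$ is then immediate: taking $\aut=1$ and recalling that $\tilde L_1(u)(v)=\io u'v'+\io(\tilde r^-+\delta-\tilde r^+-\delta)uv=\io u'v'-\io \tilde r\,uv=\tilde L(u)(v)$ (so that the auxiliary parameter $\delta$ drops out), Proposition \ref{surSL} yields surjectivity of $\tilde L$ as soon as $1$ is not an eigenvalue of $\tilde L_{\aut}$.

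For part $(ii)$ I would show that $\|\tilde r^+\|_\infty<c_p^{-2}$ forces $1$ \emph{not} to be an eigenvalue of $\tilde L_{\aut}$, so that $(i)$ applies. If $1$ were an eigenvalue, by Definition \ref{def-eigen} there would exist a nonzero $\varphi\in\hl$ with $\tilde L_1(\varphi)=0$, i.e.\ $\io\varphi'v'=\io\tilde r\,\varphi v$ for all $v\in\hl$. Testing with $v=\varphi$, using $\tilde r\le\tilde r^+$ and the Poincar\'e inequality \eqref{def-c},
$$
\io(\varphi')^2=\io\tilde r\,\varphi^2\le\|\tilde r^+\|_\infty\io\varphi^2\le\|\tilde r^+\|_\infty c_p^2\io(\varphi')^2,
$$
so that $(1-\|\tilde r^+\|_\infty c_p^2)\|\varphi\|^2\le 0$; since $\|\tilde r^+\|_\infty c_p^2<1$ this forces $\varphi=0$, a contradiction. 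Hence $1$ is not an eigenvalue and $\tilde L$ is surjective by $(i)$.

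There is no genuine obstacle left once Proposition \ref{surSL} is available: the bulk of the analysis (completeness of eigenfunctions, Lax--Milgram on $E_k^\perp$) has already been carried out there. The only point deserving care is conceptual rather than technical, namely recognizing that the regularization parameter $\delta$ is immaterial precisely because it cancels at $\aut=1$, so that the eigenvalue hypothesis in $(i)$ is intrinsic to $\tilde L$ and, in $(ii)$, can be verified by the one-line energy estimate above instead of by inspecting the whole spectrum $\{\aut_k\}$.
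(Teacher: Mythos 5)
Your proof is correct, and part $(i)$ coincides with the paper's (both reduce to Proposition \ref{surSL} after checking that $q=\tilde r^-+\delta$ and $r=\tilde r^++\delta$ satisfy its hypotheses and that $\tilde L_1=\tilde L$). For part $(ii)$ you take a slightly different route: the paper picks $\delta<c_p^{-2}-\|\tilde r^+\|_\infty$ and bounds the \emph{first} eigenvalue from below via its Rayleigh quotient, $\aut_1\ge c_p^{-2}/(\|\tilde r^+\|_\infty+\delta)>1$, so that $1$ lies below the whole spectrum; you instead show directly that the kernel of $\tilde L=\tilde L_1$ is trivial by testing a putative eigenfunction against itself and invoking Poincar\'e, which by Definition \ref{def-eigen} is exactly the statement that $1$ is not an eigenvalue. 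The two computations are essentially the same estimate applied to different objects, but yours is marginally cleaner: it makes no reference to $\delta$ or to the ordering of the spectrum from Theorem \ref{baseSL}, and it makes explicit that the eigenvalue condition in $(i)$ is intrinsic to $\tilde L$ rather than to the $\delta$-regularized family. The paper's version buys the extra (unneeded here) information that $1$ sits strictly below $\aut_1$. Either way the argument is complete.
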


\begin{proof}
The proof of $(i)$ is an immediate consequence of Proposition \ref{surSL}, since $\tilde L_{1}=\tilde L$. In order to prove $(ii)$, we choose $\delta< c_p^{-2}- \|\tilde r^+\|_{\infty}$ in the definition \eqref{defL2bis} of $\tilde L_{\aut}$. Let $\varphi_1$ be an eigenfunction of $\aut_1$, normalized so that $\int_0^1 (\tilde r^++\delta)\varphi_1^2=1$. Then
\begin{eqnarray*}
\aut_1  =  \frac{\displaystyle \int_0^1 ((\varphi_1')^2 +(\tilde r^-+\delta)(\varphi_1^2))}{\displaystyle \int_0^1 (\tilde r^++\delta)\varphi_1^2}
\geq \frac{\displaystyle \int_0^1 (\varphi_1')^2}{\displaystyle(\|\tilde r^+\|_{\infty}+\delta)\int_0^1 \varphi_1^2} \geq \frac{c_p^{-2}}{(\|\tilde r^+\|_{\infty}+\delta)}>1,
\end{eqnarray*}
hence $1$ is not an eigenvalue.
\end{proof}
}

\section*{Acknowledgments}
\ch{We are indebted to the referees for their deep and inspiring comments.}
GT received financial support from the MIUR-Italy grant ``Excellence Departments''. LG and GT received financial support from the  MIUR-Italy grant ``Mathematics of active materials: From mechanobiology to smart devices'' (PRIN 2017KL4EF3).

\end{document}